\newtheorem{theorem}{Theorem}[section]
\newtheorem{lemma}{Lemma}[section]
\newtheorem{proposition}{Proposition}[section]
\newtheorem{remark}{Remark}[section]
\newtheorem{definition}{Definition}[section]
\newtheorem{corollary}{Corollary}[section]
\theoremstyle{definition}
\newtheorem{example}{Example}[section]
\DeclareMathOperator{\F}{\mathcal F}
\DeclareMathOperator{\1}{\mathds{1}}
\DeclareMathOperator{\D}{\mathcal D}
\DeclareMathOperator{\R}{\mathbb R}
\DeclareMathOperator{\N}{\mathbb N}
\DeclareMathOperator{\C}{\mathcal C}
\newcommand*\diff{\mathop{}\!\mathrm{d}}
\DeclareMathOperator{\KL}{KL}
\DeclareMathOperator{\RR}{R}
\DeclareMathOperator{\TT}{T}
\DeclareMathOperator{\SK}{SK}
\DeclareMathOperator{\II}{I}
\DeclareMathOperator{\M}{\mathcal M}
\DeclareMathOperator{\intt}{int}
\DeclareMathOperator{\relint}{relint}
\DeclareMathOperator{\Lip}{Lip}
\DeclareMathOperator{\B}{\mathcal B}
\DeclareMathOperator{\supp}{supp}
\let\P\relax
\DeclareMathOperator{\P}{\mathcal P}
\DeclareMathOperator{\T}{\mathcal T}
\let\eps\varepsilon
\let\phi\varphi
\DeclareMathOperator*{\argmin}{argmin}
\DeclareMathOperator{\tT}{\textrm T}
\DeclareMathOperator{\tF}{\textrm F}
\DeclareMathOperator{\dom}{dom}
\DeclareMathOperator{\OT}{OT}
\newcommand{\FL}{\mathcal L}
\newcommand{\Mat}[1]{\bm{#1}}
\newcommand{\MP}{\Mat P}
\newcommand{\MQ}{\Mat Q}
\newcommand{\MM}{\Mat M}
\newcommand{\MX}{\Mat X}
\newcommand{\MY}{\Mat Y}
\newcommand{\Vf}{\Mat f}
\newcommand{\Vg}{\Mat g}
\newcommand{\Vr}{\Mat r}
\newcommand{\Vs}{\Mat s}
\newcommand{\Vt}{\Mat t}
\newcommand{\Vc}{\Mat c}
\newcommand{\Vq}{\Mat q}
\newcommand{\Vnu}{\Mat \nu}
\newcommand{\Vxi}{\Mat \xi}
\begin{document}

\title[Interpolating between Optimal Transport 
and KL regularized Optimal Transport
using Rényi Divergences]{\centering Interpolating between Optimal Transport \\
and KL regularized Optimal Transport \\
using Rényi Divergences}


\author*[1]{\fnm{Jonas} \sur{Bresch}}\email{\color{black}bresch@math.tu-berlin.de}

\author*[1]{\fnm{Viktor} \sur{Stein}}\email{\color{black}stein@math.tu-berlin.de}

\affil[1]{\orgdiv{Institute of Mathematics}, \orgname{Technical University Berlin}, \orgaddress{\street{Straße des 17. Juni}, \city{Berlin}, \postcode{10623}, \state{Berlin}, \country{Germany}}}


\abstract{
    Regularized optimal transport (OT) 
    has received much attention in recent years
    starting from Cuturi's introduction of Kullback-Leibler (KL) divergence regularized OT.
    In this paper, we propose regularizing the OT problem
    using the family of $\alpha$-Rényi divergences for $\alpha \in (0, 1)$.
    Rényi divergences are neither $f$-divergences 
    nor Bregman distances, 
    but they recover the KL divergence in the limit $\alpha \nearrow 1$.
    The advantage of introducing the additional parameter $\alpha$ is 
    that for $\alpha \searrow 0$ we obtain convergence to the unregularized OT problem.
    For the KL regularized OT problem, this was achieved by letting the regularization parameter $\eps$ tend to zero, 
    which causes numerical instabilities.
    We present two different ways to obtain premetrics on probability measures, namely
    by Rényi divergence constraints and by penalization.
    The latter premetric interpolates
    between the unregularized and the KL regularized OT problem 
    with weak convergence of the unique minimizer, 
    generalizing the interpolation property of KL regularized OT.
    We use a nested mirror descent algorithm to solve the primal formulation.
    Both on real and synthetic data sets 
    Rényi regularized OT plans outperform their KL and Tsallis counterparts 
    in terms of being closer to the unregularized transport plans 
    and recovering the ground truth in inference tasks better.
}
\keywords{optimal transport, regularization, Rényi divergences, premetrics, interpolation}
\pacs[MSC Classification]{49Q22, 46N10, 94A15}

\maketitle

\section{Introduction}

Optimal transport (OT) is a physically 
and geometrically well-motivated procedure for efficiently transporting mass between probability measures 
$\mu, \nu \in \P(X)$ on a metric space $X$.
However, any OT plan between $\mu$ and $\nu$ 
is only supported on (a subset of) the graph 
of the $c$-subdifferential of a $c$-convex function, 
in particular, on a Lebesgue-null set. 
The solution of the regularized OT problem 
is supported on a non-null set.
Entropic regularization using the negative Shannon entropy 
as the regularizer
was first addressed by Wilson~\cite{W1969} in 1969.
Building upon those ideas,
Cuturi~\cite{C13} introduced the discrete KL regularized OT problem 
by adding the regularizer $\eps \KL(\;\cdot\mid \mu \otimes \nu)$ 
to the OT objective,
speeding up the computation compared to the original OT problem by utilizing the Sinkhorn algorithm.
He showed that for $\eps \searrow 0$ the regularized OT plan 
converges to the unregularized plan.
Unfortunately, 
in practice the regularized plan is only structurally similar 
to the unregularized plan for very small $\eps$,
see \Cref{fig:KL_reg_comp}.
Additionally, the very fast Sinkhorn algorithm suffers from numerical instability 
for those small $\eps$.
Chizat et al.~\cite{CPSV17}
and Schmitzer~\cite{S19}, 
introduced stabilized versions of the vanilla one,
partially addressing these numerical instabilities.

Building upon this point of view,
many subsequent works address other regularizers and algorithms.
For instance,
Muzellec et al.~\cite{MNPN17} introduce a generalization of the discrete KL regularized OT
by regularizing with the $q$-Tsallis entropy but not with the $q$-Tsallis divergence, 
which recover the Shannon entropy and the KL divergence, respectively, for $q = 1$.
To solve the $q$-Tsallis regularized OT problem 
they propose two algorithms.
However, 
they only prove convergence of one algorithm for $q > 1$ 
and only convergence of the other to some plan that fulfills only one marginal constraint.
The case $q = 2$, 
referred to as Gini regularizer, was explored in \cite{LMM21,RRSW17}.
Furthermore,
for $f$-divergences, where $f$ is Legendre -- fulfilled, for instance, 
for the (reverse) KL, (reverse) $\chi^2$, and squared Hellinger divergence, 
but not the total variation%
---Terjék et al.~\cite{TS22} consider the regularization of the continuous OT problem
and propose a generalized Sinkhorn algorithm.
These methods do not apply to our setting,
since the $\alpha$-Rényi divergence does not belong to the class of $f$-divergences, 
for any $\alpha \in (0,1)$.

Another regularizer for the discrete setting,
which is not an $f$-divergence,
but instead is itself a Legendre function
is discussed in \cite{KTM20}, 
with an emphasis on the statistical properties of the regularized OT problem.
Their methods do not apply to Rényi divergences,
since the domain of the Fenchel conjugate of their regularizer $f$ has to fulfill
$\R^{d\times d}_{< 0} \subsetneq \dom{f^*}$~\cite[Def.~2.2]{KTM20},
which is not true for the Rényi divergence 
$f = \RR_{\alpha}(\cdot \mid \Vr \Vc^{\tT})$, 
since for this choice, we have equality.
Here $\Vr$ and $\Vc$ denote the discrete counterparts of $\mu$ and $\nu$, 
respectively.
Regularizers that are of Legendre-type are considered
by Dessein et al.~\cite{DPR18}. 
However, 
their definition of the restricted transport polytope~\cite[Eq.~(46), Prop.~5]{DPR18} 
differs from ours and their assumptions (A5) and (B4) are not fulfilled.
Lastly, 
the authors of \cite{BSR18,PC20} considered 
general (strongly) convex regularizers for the discrete OT problem, 
but most investigations are carried out for regularizers 
that are separable with respect to
the columns of the matrix they are applied to,
a property not fulfilled by the Rényi divergence.

\begin{figure}[t]
    \resizebox{\linewidth}{!}{
    \begin{tabular}{@{\hspace{1.4cm}} c @{\hspace{2.3cm}} c @{\hspace{2.1cm}} c @{\hspace{1.9cm}} c @{\quad} c}
        $\OT_{1}$ & $\OT_{10^{-1}}$ & $\OT_{10^{-2}}$ & $\OT_{10^{-3}}$ & $\OT$ \\
        \multicolumn{5}{c}{\includegraphics[width=1.2\linewidth, clip=true, trim=0pt 0pt 0pt 0pt]{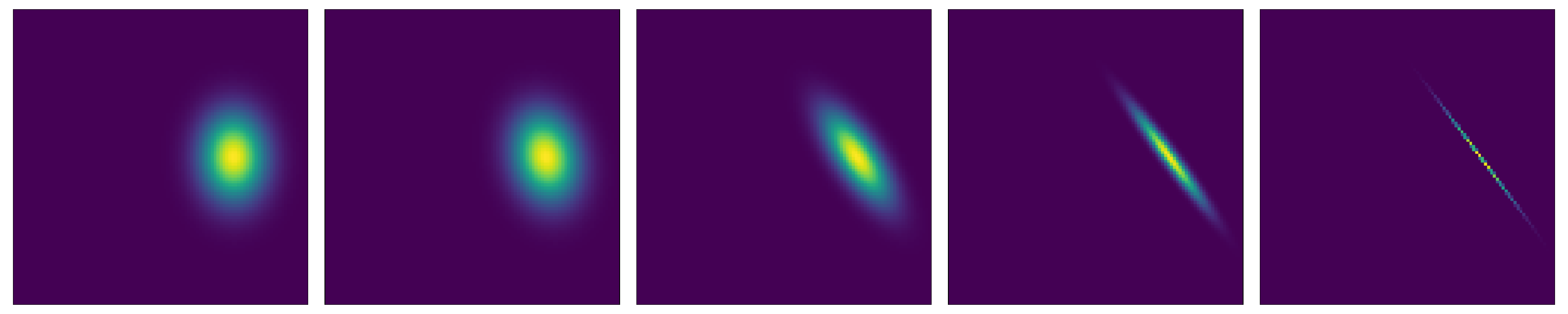}}
    \end{tabular}}
    \caption{
    Comparison of the unregularized plan (right) with the KL regularized plans for different regularization parameters, 
    between two Gaussians,
    for a (scaled) squared Euclidean distance matrix~\cite{FLA2021}.
    The two plans on the right had to be computed using improved versions of the Sinkhorn algorithm~\cite{CPSV17,S19}, 
    because the vanilla version diverged.
    }
    \label{fig:KL_reg_comp}
\end{figure}

In this paper,
we regularize the OT problem between $\mu$ and $\nu$ 
with the $\alpha$-Rényi divergence $\RR_{\alpha}(\cdot \mid \mu \otimes \nu)$
to the trivial coupling $\mu \otimes \nu$, for $\alpha \in (0, 1)$.
This generalizes Cuturi's KL regularized OT
because $\RR_{\alpha}$ converges pointwise to the KL divergence for $\alpha \nearrow 1$.
The $q$-Tsallis entropy, used in \cite{MNPN17}, 
and $q$-Tsallis divergence can be interpreted as a first-order approximation 
of the $\alpha$-Rényi entropy and divergence of the same order $\alpha = q$, respectively.
Hence, the Rényi divergence includes more information 
to distinguish two probability distributions through its higher order terms, 
such that small differences are better detected by the Rényi divergence.
Generally, 
for $\alpha \in (0, 1)$, 
the $\alpha$-Rényi divergence is always lower bounded 
by the $\alpha$-Tsallis divergence.

Rényi regularization is advantageous for applications 
since for $\alpha \searrow 0$, the regularized OT plans converge weakly to the unregularized ones. 
For the KL regularized OT problem,
convergence to the unregularized problem was achieved 
by instead taking the regularization parameter $\eps$ to zero, 
which makes the Sinkhorn algorithm instable.
Thanks to this new asymptotic behavior with respect to $\alpha$,
we derive a workaround for these instabilities.
We investigate the asymptotic regimes $\alpha \to \{ 0, 1 \}$ 
and $\eps \to \{ 0, \infty \}$
and prove convergence to the unregularized OT problem for $\alpha \searrow 0$ and $\eps \searrow 0$, respectively,
and to the KL regularized OT for $\alpha \nearrow 1$
for the objective value as well as (weakly) for the minimizer.
In contrast to KL regularized OT, 
the dual constraint is not penalized softly, 
but needs to be explicitly enforced,
so that---unlike for KL---in the case of Rényi regularization
the dual problem is not more amenable to optimization than the primal problem.

Numerically we show that for discretized measures 
the Rényi regularized transport plans structurally resemble the unregularized ones 
much better than their KL regularized and Tsallis regularized counterparts, 
even for moderate choices of regularization parameter $\eps$.
In the real-world application of predicting voter migration, 
the Rényi plans recover the ground truth better 
than other regularized plans and even than the unregularized plan, 
probably because $\alpha$-Rényi divergences put more emphasis on smaller probabilities depending on the order $\alpha$.

\textbf{Outline of the Paper}\quad
In \Cref{sec:RenyiOT}, we define the Rényi regularized OT distance 
in \Cref{def:Rényi_dist},
where the feasible region of the OT problem is restricted 
to all transport plans, whose Rényi divergence to the trivial coupling 
is bounded above by some threshold.
The Rényi regularized OT formulation we use for experiments 
is the Lagrangian reformulation \eqref{eq:dualRényiDivSinkhornDist} of that distance.
We prove that both distances are metrics
on the set of probability measures,
when we set them to zero for equal inputs.
In \Cref{sec:DualFormulationRenyiOT}, 
we derive the dual formulation 
for compact ground spaces $X$,
by calculating the convex preconjugate of the Rényi divergence 
with respect to the first component in \Cref{thm:dual_formulation},
a result that may be of independent interest.
The dual formulation enables
the explicit representation of the regularized OT plan using the dual potentials.
We show that, 
similarly to the KL regularized and Tsallis regularized OT, 
those optimal dual potentials are unique almost everywhere up to additive constants.
The main result of that section is
\Cref{theorem:ConvToOTKL}, which contains
the interpolation properties of the Rényi regularized OT problem 
in $\alpha \to \{0,1\}$ and $\eps \to \{0,+\infty\}$ separately. 

In \Cref{sec:algo} we prove the convergence of a nested mirror descent scheme 
to solve the Rényi regularized OT problem ($\RR_\alpha$ROT), 
where the outer mirror descent uses a modified Polyak step size 
and converges due to the objective being locally Lipschitz continuous.
This mirror descent reduces to performing Sinkhorn updates in every iteration
since we use the negentropy as the divergence generator.

\Cref{sec:Num} contains numerical experiments.
We show that regularizing with the Rényi divergence results in tighter transport plans,
that is, the Rényi regularized OT plans are closer 
to the unregularized plans with respect to the mean squared error for much smaller values 
of the regularization parameter $\eps$, similarly to the Tsallis regularized OT plans, 
but our plans reconstruct the ground truth better than the latter in inference tasks 
on real data sets.

Conclusions are drawn in \Cref{sec:Conclusions}.
The appendix contains an auxiliary proof on log-convex functions, 
the generalization of the preconjugate computation to non-compact spaces, 
and a subgradient method to solve the dual problem (D-$\RR_\alpha$ROT).

\textbf{Notation} \quad
By $\R_+ \coloneqq [0, \infty)$ we denote the nonnegative and by $\overline{\R} \coloneqq \R \cup \{ \pm \infty \}$ 
the extended real numbers.
Throughout this paper we will use following conventions: $0 \cdot \infty \coloneqq 0$, $\frac{1}{0} \coloneqq \infty$, $\frac{1}{\infty} \coloneqq 0$, as well as $0 \ln(0) \coloneqq 0$ and $\ln(0) \coloneqq -\infty$.
For a normed space $X$ with dual space $X^*$, 
we denote by $\Gamma_0(X)$ the set 
of convex and lower semicontinuous functions 
$f \colon X \to (- \infty, \infty]$, excluding the function $f \equiv \infty$.
The (effective) domain 
of $f$ is $\dom(f) \coloneqq \{ x \in X: f(x) < \infty \}$.
For $x \in \dom(f)$, 
the subdifferential of $f$ at $x$ is 
$\partial f(x) \coloneqq \{ x^* \in X^*: f(y) - f(x) \ge \langle x^*, y - x \rangle \ \forall y \in X \}$.

Let $X$ be a Polish space and $\B$ its Borel $\sigma$-algebra.
By $\M(X)$, we denote the space of finite Borel measures on $(X, \B)$, 
equipped with the total variation norm,
by $\M_+(X)$ is subset of non-negative measures, and
and by $\P(X)$ its subset of probability measures.
Further, 
$C(X)$ denotes the space of continuous functions on $X$, equipped with the supremum norm.
The support of $\mu \in \P(X)$ is defined as the
set of all points in $X$ whose every open neighborhood has positive measure
and denoted by $\supp(\mu)$.

\section{Rényi regularized optimal transport} \label{sec:RenyiOT}

In this section we introduce two ways to regularize the optimal transport (OT) problem 
using the $\alpha$-Rényi divergence \cite{GCE2024,GCE2024b} for $\alpha \in (0,1)$.
First,
we regularize by restricting the minimization problem to all transport plans
whose Rényi divergence to the trivial coupling is upper bounded by a regularization parameter $\gamma$.
Second,
we regularize the objective function by adding the Rényi divergence as a penalty term. 
This second regularized problem will be denoted by $\OT_{\eps, \alpha}$
and is the main object of study in this paper.
We derive the convex preconjugate of the Rényi divergence,
enabling the derivation of the dual formulation 
of $\OT_{\eps, \alpha}$ for compact ground spaces $X$.
The last subsection is concluded with a theorem
stating the convergence properties of $\OT_{\eps, \alpha}$
for $\eps \to \{ 0, \infty \}$ and $\alpha \to \{ 0, 1 \}$, respectively.

We begin with some definitions related to transportation problems.
The \emph{transport polytope} associated to marginals $\mu, \nu \in \P(X)$ 
is given by 
\begin{equation} \label{eq:transportPolytope}
    \Pi(\mu, \nu) 
    \coloneqq 
    \left\{ \pi \in \P(X \times X) \mid \pi(A \times X) = \mu(A), \, \pi(X \times B) = \nu(B) \;\forall A,B \in \B \right\} .
\end{equation}
Using the projections $P^{(j)} \colon X \times X \to X$, $(x_1, x_2) \mapsto x_j$, $j \in \{ 1, 2 \}$ 
onto the first and second component, 
respectively, 
the marginal constraints can be rewritten as $(P^{(1)})_{\#} \pi = \mu$ 
and $(P^{(2)})_{\#} \pi = \nu$.
Furthermore, for $p \in [1, \infty)$ the set of cost functions is defined as
\begin{equation} \label{eq:distanceFunctions}
    \D_p
    \coloneqq
    \left\{ c = d^p \mid d \colon X \times X \to [0, \infty), d \text{ is weakly lower semicontinuous and a metric} \right\}.
\end{equation}

Let $p \in [1, \infty)$ and $c \in \D_p$.
On the (convex) set of probability measures with finite $p$-th moment,
\begin{equation*}
    \P_p(X) 
    \coloneqq 
    \left\{ \mu \in \P(X) : \int_{X} c(x, x_0) \diff{\mu}(x) < \infty \ \text{for some } x_0 \in X \right\},
\end{equation*}
the (unregularized) optimal transport problem (or: Wasserstein-$p$ distance) is defined as
\begin{equation} \label{eq:OT}
    \OT \colon \P_p(X)\times\P_p(X) 
    \mapsto [0,\infty),
    \qquad
    (\mu,\nu)
    \mapsto
    \min_{\pi \in \Pi(\mu,\nu)}\;
    \langle c,\pi\rangle,
\end{equation}
where $\langle c, \pi \rangle \coloneqq \int_{X \times X} c(x, y) \diff{\pi}(x, y)$. 
Note that if $X$ is compact, 
then $\P_p(X) = \P(X)$.

The \emph{Rényi divergence} for discrete distributions was introduced 
by Alfred Rényi in 1961~\cite{R1961}.
The continuous counterpart for probability measures is defined below.

\begin{definition}[$\alpha$-Rényi divergence {\cite[Def.~2]{EH14}}]
\label{def:RenyiDivDef}
    The $\alpha$-Rényi divergence of \textit{order} $\alpha \in (0, 1)$ is defined as 
    \begin{equation} \label{eq:Renyi_div}
        \RR_\alpha: \P(X) \times \P(X) \to [0, \infty], 
        \quad
        (\mu \mid \nu) 
        \mapsto \frac{1}{\alpha - 1}\ln\left(\int_{X} \left(\frac{\rho_\mu(x)}{\rho_\nu(x)}\right)^\alpha \diff {\nu}(x) \right),
    \end{equation}
    where $\rho_{\mu}$ and $\rho_{\nu}$ are the densities of $\mu$ and $\nu$ 
    with respect to any non-negative, 
    $\sigma$-finite reference measure $\tau$ chosen 
    such that $\mu$ and $\nu$ are absolutely continuous
    with respect to $\tau$, respectively, 
    e.g. $\tau = \frac{1}{2} \mu + \frac{1}{2} \nu$.
    This representation is independent of the choice of $\tau$.

    We define the $0$- and $1$-Rényi divergence via limits~\cite[Thms.~4,~5]{EH14}, 
    that is, for $\mu, \nu \in \P(X)$ we let
    \begin{align*}
        \RR_1(\mu\mid\nu) 
        & \coloneqq \lim_{\alpha \nearrow 1} \RR_\alpha(\mu\mid\nu) 
        = \KL(\mu\mid\nu)
        \coloneqq \int_X \ln\left(\frac{\rho_\mu(x)}{\rho_\nu(x)}\right)\diff \mu(x). \\
        \RR_0(\mu\mid\nu) 
        & \coloneqq \lim_{\alpha \searrow 0} \RR_\alpha(\mu\mid\nu)
        = -\ln\bigg(\nu\big(\supp(\mu)\big)\bigg).
    \end{align*}
\end{definition}

\begin{figure}[t]
    \centering
    \includegraphics[width=0.42\linewidth, clip=true, trim=0pt 0pt 670pt 0pt]{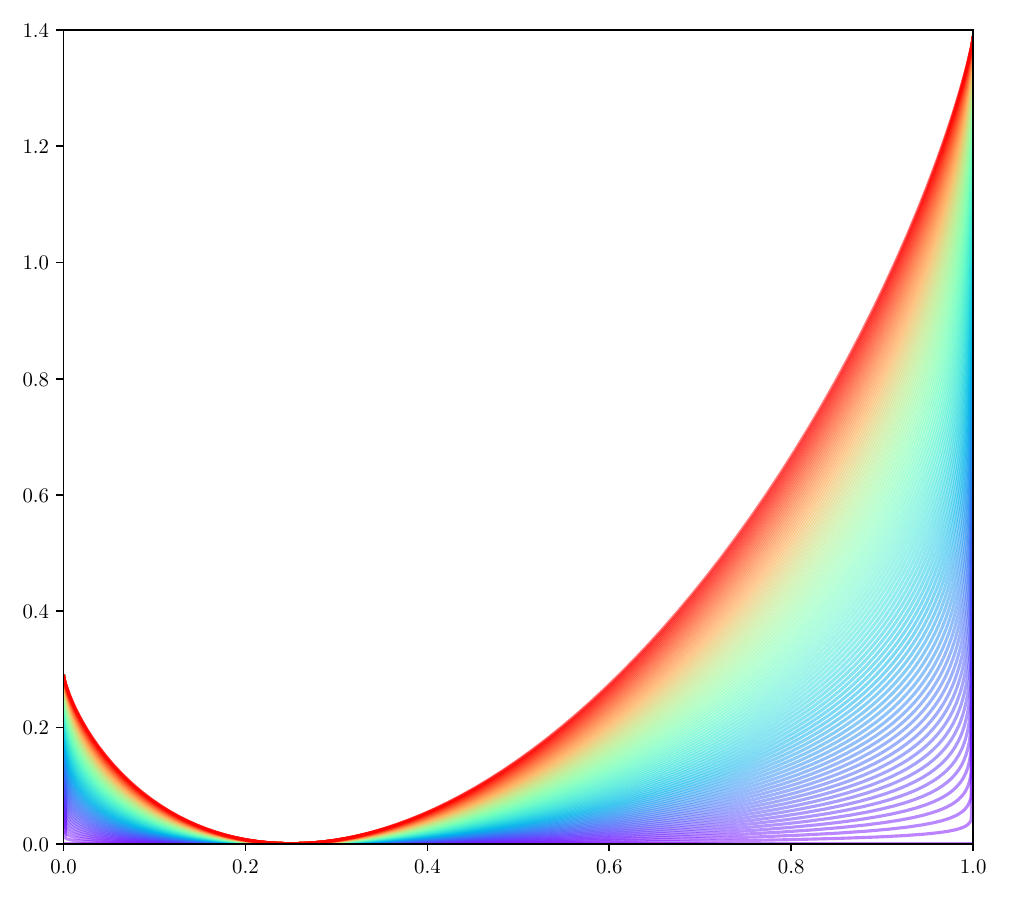}
    \includegraphics[width=0.48\linewidth, clip=true, trim=550pt 0pt 50pt 0pt]{pics/renyi_tsallis_p_q.pdf}
    \caption{
    Plot of the discrete Rényi divergence (left), defined in \eqref{eq:discreteRenyi}, 
    and the discrete Tsallis divergence (right), 
    both of $(p,1-p)$ to $(\frac{1}{4},\frac{3}{4})$ 
    over $p \in [0, 1]$ for different values of $\alpha \in [0, 1]$ as indicated by the color bar.
    Note that in the left plot, the green band extends much higher and the red band is much thinner than in the right plot.}
    \label{fig:Renyi-Comparison}
\end{figure}

Another way of measuring the discrepancy of two probability measures 
is given by $f$-divergences.
A $f$-divergence~\cite{LMS2017} between $\mu,\nu \in \P(X)$,
where $f \colon [0, \infty) \to [0, \infty]$ is a so-called entropy function,
is defined by
\begin{align*}
    D_f(\mu \mid \nu) \coloneqq \int_X f\left(\rho(x)\right) \diff \nu(x)
    + f_{\infty}' \cdot \mu_s(X),
\end{align*}
where $\mu = \rho \nu + \mu_s$ is the Lebesgue decomposition of $\mu$ 
with respect to $\nu$ and $f_{\infty}' \coloneqq \lim_{x \to \infty} \frac{f(x)}{x}$ 
is the \textit{recession constant} of $f$, 
with the convention $\infty \cdot 0 \coloneqq 0$.
Hence, 
the Rényi divergence is not a $f$-divergence due to the log-term.

The $q$-Tsallis divergences for $q \in (0, 1)$
are a family of $f$-divergences,
using the entropy function 
$f_q(t) \coloneqq \frac{1}{q-1}(t^q - qt + q - 1)$ for $t \ge 0$, namely,
\begin{align*}
    D_{f_q}(\mu \mid\nu)
    = \frac{1}{q-1}\left[
    \int_X \rho(x)^q\diff\nu(x) -1\right] + \frac{1}{1-q} \mu_s(X).
\end{align*}
The $q$-Tsallis entropy of a measure $\mu$ 
with density $\rho_\mu$ with respect to $\tau$
is defined by 
\begin{equation*}
    \TT_q(\mu) \coloneqq \frac{1}{q - 1}\left[1 - \int_X \rho_\mu(x)^q \diff \tau(x)\right].
\end{equation*}
Note that
the Tsallis divergence can be seen as the first-order approximation 
of the Rényi divergence of the same order and we have
\begin{equation*}
    R_{\alpha} = m_{\alpha} \circ D_{f_{\alpha}}, 
    \qquad 
    \text{where}
    \qquad
    m_{\alpha} \colon \left[0, \tfrac{1}{1 - \alpha}\right) \to [0, \infty), 
    \; x \mapsto \tfrac{1}{\alpha - 1} \ln\big( (\alpha - 1) x + 1\big).
\end{equation*}

Before we list important properties of the Rényi divergences (for a complete list we refer to~\cite{EH14})
used throughout the paper, we first define Markov kernels.

\begin{definition}[Markov kernel] \label{definition:MarkovKernel}
    Let $(X, \B)$ and $(T, \T)$ be measurable spaces.
    A Markov (transition) kernel between them 
    is a map $\kappa \colon X \times \T \to [0, 1]$ 
    such that $k(\cdot, B)$ is $\B$-$\B([0, 1])$ 
    measurable for all $B \in \T$ and $k(x, \cdot) \in \P(T, \T)$ for all $x \in X$.
    The signal $\mu \in \P(X)$ pushed through $\kappa$
    is defined as $(\kappa \circ \mu)(B) \coloneqq \int_{X} \kappa(x, B) \diff{\mu}(x)$ for $B \in \T$.    
\end{definition}

\begin{example}[Deterministic Markov kernel]
    Every measurable map $\phi \colon (X, \T) \to (X, \T)$ induces a deterministic Markov kernel $\kappa_{\phi} \colon X \times \T \to \{ 0, 1 \}$, $(x, B) \mapsto \1_{\phi^{-1}(B)}(x)$ with the property that $\kappa_{\phi} \circ \mu = \phi_{\#} \mu$.
\end{example}

\begin{proposition}[Properties of the Rényi divergence] \label{prop:RenyiDivProps}\leavevmode
    \begin{enumerate}
        \item 
        \textbf{Divergence property \cite[Thm.~8]{EH14}.}
        We have $\RR_{\alpha}(\mu \mid \nu)\ge 0$ for all $\alpha \in [0, 1]$ and for $\alpha > 0$ and $\mu, \nu \in \P(X)$ we have $\RR_{\alpha}(\mu \mid \nu) = 0$ if and only if $\mu = \nu$.

        \item 
        \textbf{Varying the order $\alpha$~\cite[Thms.~3,~7]{EH14}.}
        $\RR_{\alpha}$ is pointwise nondecreasing and continuous in $\alpha \in [0, 1]$.

        \item 
        \textbf{Strict convexity~\cite[Thm.~11]{EH14}.}
        $\RR_{\alpha}$ is jointly convex for $\alpha \in [0, 1]$.
        For $\mu_1,\mu_2, \nu \in \P(X)$ and $\eta \in (0, 1)$, it holds that
        \begin{equation*}
            \RR_\alpha(\eta\mu_1 + (1 - \eta)\mu_2\mid \nu)
            = \eta \RR_\alpha(\mu_1\mid \nu) + (1 - \eta)\RR_\alpha(\mu_2 \mid \nu),
        \end{equation*}
        if and only if $\mu_1 = \mu_2$ $\nu$-a.s.
        
        \item 
        \textbf{Continuity ~\cite[Thms.~15,~17]{EH14}.}
        $\RR_{\alpha}$ is weakly jointly lower semicontinuous for every $\alpha \in (0, 1]$ 
        and strongly jointly uniformly continuous for every $\alpha \in (0, 1)$.

        \item 
        \textbf{Data processing inequality~\cite[Eq.~(57/59)]{PS10}.}
        For any $P, Q \in \P(X)$, $\alpha \in (0,1]$, and Markov kernel $\kappa$, it 
        holds that $\RR_\alpha(\kappa \circ P \mid \kappa \circ Q) \leq \RR_\alpha(P \mid Q)$.
    \end{enumerate}
\end{proposition}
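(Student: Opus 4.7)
The plan is to treat each of the five items separately since each is a standard consequence of the cited references and each uses a different technique. Throughout, I would work with the Hellinger-type integral
\[
Z_\alpha(\mu \mid \nu) \coloneqq \int_X \rho_\mu(x)^\alpha \rho_\nu(x)^{1-\alpha} \diff \tau(x),
\]
so that $\RR_\alpha(\mu \mid \nu) = (\alpha-1)^{-1} \ln Z_\alpha(\mu \mid \nu)$. Essentially all claims reduce to Hölder/Jensen manipulations applied to $Z_\alpha$ followed by elementary properties of the logarithm, keeping careful track of the sign of $(\alpha-1)^{-1}$, which is negative for $\alpha \in (0,1)$.

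For (1), I would apply Jensen's inequality with the concave power $t \mapsto t^\alpha$ to the density ratio $\rho_\mu/\rho_\nu$ integrated against $\nu$ to obtain $Z_\alpha \leq 1$, with equality iff the ratio is $\nu$-a.e.\ constant, which then forces it to equal $1$. Dividing by the negative factor $(\alpha-1)^{-1}$ flips this to nonnegativity of $\RR_\alpha$ and characterizes the equality case $\mu = \nu$. For (2), monotonicity in $\alpha$ would follow from log-convexity of $\alpha \mapsto Z_\alpha$ (a direct Hölder computation), and continuity on $(0,1)$ from dominated convergence using the bound $\rho_\mu^\alpha \rho_\nu^{1-\alpha} \le \rho_\mu + \rho_\nu$ coming from weighted AM--GM; the boundary behaviors at $\alpha \nearrow 1$ and $\alpha \searrow 0$ are exactly the limits recorded in Definition~\ref{def:RenyiDivDef}.

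For (3), I would first show that $Z_\alpha$ is jointly concave in $(\mu,\nu)$ by invoking the pointwise concavity of $(x,y) \mapsto x^\alpha y^{1-\alpha}$, and then use the concavity of $\ln$ to conclude that $\ln Z_\alpha$ is jointly concave. Multiplying by the negative constant $(\alpha-1)^{-1}$ yields joint convexity of $\RR_\alpha$. For (4), weak joint lower semicontinuity would be derived from a Donsker--Varadhan-style variational representation of $\RR_\alpha$ as a supremum of functionals that are weakly continuous in $(\mu,\nu)$, while strong uniform continuity on $\P(X)\times\P(X)$ with respect to total variation would come from the uniform continuity of $Z_\alpha$ combined with a uniform lower bound on $Z_\alpha$ away from $0$ to keep the logarithm Lipschitz. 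Finally, (5) reduces to showing $Z_\alpha(\kappa \circ P \mid \kappa \circ Q) \geq Z_\alpha(P \mid Q)$ by Jensen's inequality applied through the kernel (exploiting the concavity of the same integrand as in (3)) and then flipping signs under $(\alpha-1)^{-1}\ln$.

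The main obstacle I anticipate is the equality characterization in (3): showing that the convexity inequality for $\RR_\alpha$ degenerates to equality precisely when $\mu_1 = \mu_2$ $\nu$-a.s.\ requires a careful analysis of when the pointwise inequality for the integrand of $Z_\alpha$ becomes sharp on a set of positive $\nu$-measure, together with the equality case of Jensen applied to $\ln$; one must track both sources of slack separately, since each can produce strictness. All other items reduce to textbook Jensen/Hölder arguments or routine dominated convergence.
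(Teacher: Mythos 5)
The paper offers no proof of this proposition at all: each item is imported verbatim from the literature (\cite{EH14} for items 1--4, \cite{PS10} for item 5), so there is no in-paper argument to compare against. Your reconstruction via the Hellinger-type integral $Z_\alpha$ is the standard route taken in those references, and items (1), (3) and (5) are correctly sketched; in particular you rightly isolate the equality analysis in (3) as the delicate point, where, with $\nu$ fixed, the strict concavity of $x \mapsto x^\alpha$ already forces $\rho_{\mu_1} = \rho_{\mu_2}$ on $\{\rho_\nu > 0\}$, after which the Jensen slack for $\ln$ vanishes automatically. Two smaller points: for (2), log-convexity of $\alpha \mapsto Z_\alpha$ is not by itself enough --- you also need $\ln Z_1 \le 0$ and the monotone chord-slope argument to turn convexity of $\ln Z_\alpha$ into monotonicity of $(\alpha-1)^{-1}\ln Z_\alpha$; and for the weak lower semicontinuity in (4) you must check that the variational representation still attains $\RR_\alpha$ when the supremum is restricted to bounded \emph{continuous} test functions (this is precisely what \Cref{theorem:RenyiPreconjugate} and \Cref{lem:renyi_var_form_C0} provide), not merely measurable ones.

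The one genuine gap is the uniform continuity claim in (4). There is no ``uniform lower bound on $Z_\alpha$ away from $0$'': for mutually singular $\mu \perp \nu$ one has $Z_\alpha(\mu \mid \nu) = 0$ and $\RR_\alpha(\mu \mid \nu) = +\infty$, and such pairs cannot be excluded from $\P(X) \times \P(X)$, so composing a uniformly continuous $Z_\alpha$ with a Lipschitz logarithm breaks down near $Z_\alpha = 0$. The correct repair, implicit in \cite[Thm.~17]{EH14}, is to show that $(\mu,\nu) \mapsto Z_\alpha(\mu \mid \nu) \in [0,1]$ is uniformly continuous in total variation and then compose with $z \mapsto (\alpha - 1)^{-1}\ln z$, viewed as a continuous --- hence, by compactness of $[0,1]$, uniformly continuous --- map into the compactified half-line $[0,+\infty]$; equivalently, ``uniformly continuous'' for $\RR_\alpha$ must be read with respect to a metric on the extended reals. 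With that adjustment the remaining items are routine, and the endpoint $\alpha = 1$ in (2) and (5) should be handled via the monotone limit $\RR_\alpha \nearrow \RR_1$ rather than through the $Z_\alpha$ computation directly.
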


\subsection{The Rényi Optimal Transport Premetric}

If we replace the KL divergence with the Rényi divergence in Cuturi's Sinkhorn distance~\cite[Def.~1]{C13},
then we obtain what we call the Rényi-OT premetric,
which relies on the following definition.

\begin{definition}[$\alpha$-Rényi ball of level $\gamma$ centered around $\mu\otimes\nu$] \label{definition:RenyiBall}
    For $\mu, \nu \in \P(X)$, $\gamma \in [0, \infty]$ and $\alpha \in (0, 1)$,
    the $\alpha$-Rényi ball of level $\gamma$ centered around $\mu\otimes\nu$ is
    \begin{equation}
        \Pi_\gamma^\alpha(\mu, \nu) 
        \coloneqq 
        \left\{\pi \in \Pi(\mu, \nu) : \RR_\alpha(\pi \mid \mu\otimes\nu) \le \gamma \right\}.
    \end{equation}
\end{definition}

By the divergence property from \Cref{prop:RenyiDivProps} 
we have $\Pi_0^{\alpha}(\mu, \nu) = \{ \mu \otimes \nu \}$.

\begin{lemma} \label{lemma:restricted_polytope_compact}
    The set $\Pi_{\gamma}^{\alpha}(\mu, \nu)$ is convex and weakly compact.
\end{lemma}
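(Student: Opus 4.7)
The plan is to decompose the claim into two parts: convexity and weak compactness, and handle each by combining known properties of $\Pi(\mu,\nu)$ with those of $\RR_\alpha(\cdot\mid \mu\otimes\nu)$ listed in \Cref{prop:RenyiDivProps}.

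For convexity, I would first note that $\Pi(\mu,\nu)$ is convex because the two marginal conditions in \eqref{eq:transportPolytope} are affine constraints on $\pi \in \P(X\times X)$, so $\Pi(\mu,\nu)$ is the intersection of (infinitely many) affine half-spaces of the normed space $\M(X\times X)$. Then, by the joint convexity of $\RR_\alpha$ for $\alpha \in [0,1]$ (\Cref{prop:RenyiDivProps}(3)), the map $\pi \mapsto \RR_\alpha(\pi\mid \mu\otimes\nu)$ is convex, so its sublevel set at height $\gamma$ is convex. Since $\Pi_\gamma^\alpha(\mu,\nu)$ is the intersection of these two convex sets, it is convex.

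For weak compactness, I would separately show weak closedness and relative weak compactness. Weak closedness follows from two facts: $\Pi(\mu,\nu)$ is weakly closed, because for any weakly converging net $\pi_i \rightharpoonup \pi$, testing against continuous bounded functions of the form $\varphi(x) = f(x_1)$ preserves $(P^{(1)})_\# \pi = \mu$ and similarly for the second marginal; and $\RR_\alpha(\cdot\mid \mu\otimes\nu)$ is weakly lower semicontinuous by \Cref{prop:RenyiDivProps}(4), so its sublevel set $\{\pi : \RR_\alpha(\pi\mid \mu\otimes\nu) \le \gamma\}$ is weakly closed. For relative weak compactness, by Prokhorov's theorem (recall $X$ is Polish, so $X\times X$ is too), it suffices to show $\Pi(\mu,\nu)$ is tight. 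Given $\eps > 0$, tightness of $\mu$ and $\nu$ yields compact sets $K_\mu, K_\nu \subset X$ with $\mu(K_\mu^c), \nu(K_\nu^c) < \eps/2$; then for any $\pi \in \Pi(\mu,\nu)$, the compact set $K_\mu \times K_\nu \subset X\times X$ satisfies $\pi((K_\mu\times K_\nu)^c) \le \mu(K_\mu^c) + \nu(K_\nu^c) < \eps$, independently of $\pi$.

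Combining the two bullets, $\Pi(\mu,\nu)$ is weakly compact, and $\Pi_\gamma^\alpha(\mu,\nu)$ is a weakly closed subset, hence also weakly compact. No single step is a genuine obstacle; the only point requiring some care is the tightness argument, which is why I made it explicit rather than simply appealing to "standard properties of $\Pi(\mu,\nu)$"—this matters because the excerpt allows $X$ to be merely Polish, not compact, so weak compactness of $\P(X\times X)$ itself is unavailable and one really does need Prokhorov together with the marginal constraints.
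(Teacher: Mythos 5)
Your proof is correct and follows essentially the same route as the paper: convexity from the joint convexity of $\RR_\alpha$ together with convexity of $\Pi(\mu,\nu)$, and weak compactness from the weak lower semicontinuity of $\RR_\alpha(\cdot\mid\mu\otimes\nu)$ (closed sublevel set) inside the weakly compact set $\Pi(\mu,\nu)$. The only difference is that the paper simply cites \cite[Thm.~4.1]{V08} for the weak compactness of $\Pi(\mu,\nu)$, whereas you reprove it via Prokhorov's theorem and tightness of the marginals, which is a correct and self-contained substitute.
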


\begin{proof}
    See Appendix~\ref{sec:app_add_proofs}.
\end{proof}

\begin{definition}[Rényi-OT premetric]
\label{def:Rényi_dist}
    For an order $\alpha \in (0, 1)$, a regularization parameter $\gamma \in [0, \infty]$ and a cost $c \in \D_p$, the $(c, \gamma, \alpha)$-Rényi-OT premetric is defined as 
    \begin{equation}
    \begin{aligned} 
    \label{eq:Renyi-Sinkhorn_metric}
        d_{c,\gamma, \alpha} \colon \P_p(X)\times\P_p(X) & \to [0, \infty), \\
        (\mu, \nu) & \mapsto \min_{\pi \in \Pi_\gamma^\alpha(\mu, \nu)} \left(\langle c, \pi \rangle\right)^{\frac{1}{p}}
        = 
        \left(\min_{\pi \in \Pi_\gamma^\alpha(\mu, \nu)} \langle c, \pi \rangle\right)^{\frac{1}{p}}.
    \end{aligned}
    \end{equation}
\end{definition}

We have $d_{c, 0, \alpha}(\mu,\nu)^p = \langle c, \mu \otimes \nu \rangle$.

The next proposition states the uniqueness of the minimizer 
and relates the value of $d_{c,\gamma, \alpha}$ 
to the unregularized problem in dependence on the threshold $\gamma > 0$.

\begin{proposition}
\label{remark:ExandUniqueOfOT}
    Let $\mu, \nu \in \P_p(X)$, $\gamma \in [0, \infty)$,
    $c \in \D_p$,
    and $\alpha \in (0, 1)$.
    Let 
    \begin{equation*}
        \overline\gamma 
        \coloneqq 
        \inf_{\substack{\pi \in \Pi(\mu, \nu), \\ \OT(\mu,\nu) = \langle c, \pi\rangle}} 
        \RR_\alpha(\pi \mid \mu\otimes \nu) < \infty.
    \end{equation*}
    If $\gamma < \overline\gamma$, then
    the problem \eqref{eq:Renyi-Sinkhorn_metric} has a unique solution.
    If $\gamma \ge \overline\gamma$, then
    $d_{c, \gamma, \alpha}(\mu, \nu) = \OT(\mu, \nu)$.
\end{proposition}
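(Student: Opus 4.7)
The proof splits naturally into the two cases $\gamma \ge \overline\gamma$ and $\gamma < \overline\gamma$. For existence of a minimizer in either case, I would combine the weak compactness of $\Pi_\gamma^\alpha(\mu,\nu)$ from \Cref{lemma:restricted_polytope_compact} with weak lower semicontinuity of the linear functional $\pi \mapsto \langle c,\pi\rangle$ (which follows from $c \ge 0$ being weakly lsc by~\eqref{eq:distanceFunctions}). It is also convenient to note that the infimum defining $\overline\gamma$ is attained: the set of unregularized OT optima $\{\pi \in \Pi(\mu,\nu):\langle c,\pi\rangle = \OT(\mu,\nu)\}$ is weakly closed inside the weakly compact $\Pi(\mu,\nu)$, so by weak lower semicontinuity of $\RR_\alpha(\cdot\mid\mu\otimes\nu)$ from \Cref{prop:RenyiDivProps}(4) there exists a $\pi^\dagger$ in this set with $\RR_\alpha(\pi^\dagger\mid\mu\otimes\nu) = \overline\gamma$. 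In the easy case $\gamma \ge \overline\gamma$, this $\pi^\dagger$ lies in $\Pi_\gamma^\alpha(\mu,\nu)$ and minimizes $\langle c,\cdot\rangle$ over the larger set $\Pi(\mu,\nu) \supseteq \Pi_\gamma^\alpha(\mu,\nu)$, hence also over the smaller one, proving the claimed equality of values (modulo the $1/p$-th power).

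For the hard case $\gamma < \overline\gamma$, I would establish uniqueness in two steps. \emph{Step A: the Rényi constraint is active at every minimizer $\pi$.} Since $\gamma < \overline\gamma$ excludes every unregularized optimum from $\Pi_\gamma^\alpha(\mu,\nu)$, every feasible $\pi$ satisfies $\langle c,\pi\rangle > \OT(\mu,\nu)$. If in addition $\RR_\alpha(\pi\mid\mu\otimes\nu) < \gamma$, then for $t>0$ small the interpolant $\pi_t \coloneqq (1-t)\pi + t\pi^\dagger$ satisfies $\langle c,\pi_t\rangle < \langle c,\pi\rangle$, while convexity of $\RR_\alpha$ gives $\RR_\alpha(\pi_t\mid\mu\otimes\nu) \le (1-t)\RR_\alpha(\pi\mid\mu\otimes\nu) + t\overline\gamma \le \gamma$, contradicting optimality of $\pi$. \emph{Step B: strict convexity rules out two distinct minimizers.} If $\pi_1 \neq \pi_2$ were both minimizers, their midpoint $\bar\pi$ would again be a minimizer by linearity of the objective, and hence satisfy $\RR_\alpha(\bar\pi\mid\mu\otimes\nu) = \gamma$ by Step~A; the strict convexity statement in \Cref{prop:RenyiDivProps}(3) would then force $\RR_\alpha(\bar\pi\mid\mu\otimes\nu) < \gamma$, a contradiction.

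The delicate point is Step~B: \Cref{prop:RenyiDivProps}(3) only yields strict inequality modulo $(\mu\otimes\nu)$-a.s.\ equality of $\pi_1,\pi_2$, and for $\alpha \in (0,1)$ the Rényi divergence does not see the singular part of $\pi$ with respect to $\mu\otimes\nu$. Two hypothetical minimizers agreeing on their absolutely continuous parts but differing on their singular parts would therefore not be separated by the argument as stated, so closing the gap requires either a perturbation showing that optimal plans cannot carry such a nontrivial singular component (e.g.\ redistributing it into the a.c.\ part strictly lowers $\RR_\alpha$, freeing budget to further decrease the cost) or an interpretation of uniqueness up to this equivalence. This is the only piece of real substance; everything else is routine convex-analysis and weak-compactness bookkeeping.
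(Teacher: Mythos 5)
Your proof is correct and follows essentially the same route as the paper: existence via weak compactness of $\Pi_\gamma^\alpha(\mu,\nu)$ (\Cref{lemma:restricted_polytope_compact}) plus weak lower semicontinuity of $\langle c,\cdot\rangle$; the case $\gamma \ge \overline\gamma$ by observing that an unregularized optimum attaining $\overline\gamma$ is feasible; and uniqueness for $\gamma < \overline\gamma$ by mixing two hypothetical minimizers (strict convexity of $\RR_\alpha$ pushes the mixture strictly inside the Rényi ball) and then mixing with an unregularized optimum to produce a feasible plan of strictly smaller cost --- your two-step rearrangement (constraint activity first, then strict convexity at the midpoint) is only a cosmetic reshuffling of the paper's one-shot contradiction. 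The caveat you raise in your last paragraph --- that \Cref{prop:RenyiDivProps}(3) only separates plans differing on a set of positive $\mu\otimes\nu$-measure, so two minimizers agreeing $\mu\otimes\nu$-a.e.\ but with different singular parts would not be excluded --- applies verbatim to the paper's own argument, which invokes the same strict-convexity step without addressing this point.
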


\begin{proof}
    The function in \eqref{eq:Renyi-Sinkhorn_metric}
    is proper, since $\langle c, \mu\otimes\nu\rangle < \infty$,
    see~\cite[Def.~6.4]{V08}.
    By \cite[Lemma~4.3]{V08}, the objective function $\langle c, \cdot \rangle$ 
    is weakly lower semicontinuous on $\P(X \times X)$, and by \Cref{lemma:restricted_polytope_compact},
    $\Pi_\gamma^\alpha(\mu,\nu)$ is weakly compact,
    so there exists a minimizer in \eqref{eq:Renyi-Sinkhorn_metric}.

    Let $\pi_c \in \Pi(\mu,\nu)$ be an unregularized OT plan
    solving \eqref{eq:OT}. We distinguish two cases.
    
    If $\gamma \ge \RR_\alpha(\pi_c \mid \mu\otimes\nu)$
    (this holds for example if $\pi_c = \mu\otimes\nu$),
    then we have $\pi_c \in \Pi_{\gamma}^{\alpha}(\mu, \nu)$ and thus
    $d_{c,\gamma,\alpha}(\mu,\nu)^p = \langle c,\pi_c \rangle$
    and the minimizer is not unique in general.

    If instead $\gamma < \overline{\gamma}$,
    assume towards contradiction that $\pi_1,\pi_2 \in \Pi_\gamma^\alpha(\mu,\nu)$ 
    are distinct minimizers of \eqref{eq:Renyi-Sinkhorn_metric}.
    Since $\pi_1,\pi_2 \in \Pi_\gamma^\alpha(\mu,\nu)$ and $\pi_c \not\in\Pi_\gamma^\alpha(\mu,\nu)$, we have
    $\langle c,\pi_c\rangle < \langle c,\pi_1\rangle = \langle c,\pi_2\rangle$.
    Then for any $t \in (0,1)$ we have by the linearity of the function $\langle c, \,\cdot\, \rangle$ 
    that $\pi_t \coloneqq t\pi_1 + (1 - t)\pi_2 \in \Pi(\mu,\nu)$ has the same functional value: $\langle c, \pi_t \rangle = \langle c, \pi_1 \rangle$.
    By the strict convexity of the Rényi divergence 
    we have 
    \begin{equation*}
        \RR_\alpha(\pi_t \mid \mu\otimes\nu) < t\RR_\alpha(\pi_1\mid \mu\otimes\nu) 
        + (1 - t) \RR_\alpha(\pi_2 \mid \mu\otimes\nu) \le \gamma \qquad \forall t \in (0, 1).
    \end{equation*}
    Hence, 
    $\pi_t \in \Pi_\gamma^\alpha(\mu,\nu)$ and thus $\pi_t \ne \pi_c$.
    Furthermore,
    we have $\pi_{t, \kappa} \coloneqq \kappa\pi_c + (1-\kappa)\pi_t \in \Pi(\mu,\nu)$
    for all $\kappa \in (0,1)$ by the convexity of the transport polytope 
   and thus 
    \begin{equation} \label{eq:pi_t_kappa}
        \langle c,\pi_{t, \kappa}\rangle 
        = \kappa \langle c,\pi_c\rangle + (1-\kappa)\langle c, \pi_t\rangle 
        < \langle c,\pi_1\rangle = \langle c,\pi_2\rangle
    \end{equation}
    and
    \begin{align*}
        \RR_\alpha(\pi_{t, \kappa}\mid\mu\otimes\nu) 
        & < \kappa \RR_\alpha(\pi_c\mid\mu\otimes\nu) + (1-\kappa)\RR_\alpha(\pi_t\mid\mu\otimes\nu) \\
        & = \kappa \RR_\alpha(\pi_c\mid\mu\otimes\nu) + (1-\kappa)(\gamma - \eps),
    \end{align*}
    where $\eps \coloneqq \gamma - \RR_{\alpha}(\pi_t \mid \mu \otimes \nu) > 0$.
    Hence $\RR_\alpha(\pi_{t, \kappa}\mid\mu\otimes\nu) \le \gamma$ holds 
    if $\kappa(\RR_\alpha(\pi_c\mid\mu\otimes\nu) - (\gamma - \eps))  \le \eps$.
    We now choose any $\kappa$ with $0 < \kappa  \le \frac{\eps}{\RR_\alpha(\pi_c\mid\mu\otimes\nu) - \gamma + \eps} < 1$, which is well-defined since $\RR_\alpha(\pi_c\mid\mu\otimes\nu) - \gamma > 0$.
    For this choice of $\kappa$, 
    we have $\pi_{t, \kappa} \in \Pi_\gamma^\alpha(\mu \otimes \nu)$,
    contradicting that $\pi_1$ and $\pi_2$ are minimizers by \eqref{eq:pi_t_kappa}.
    We conclude that the minimizer is unique and fulfills the ball constraints with equality.
\end{proof}

We can now prove a generalization of \cite[Thm.~1]{C13}.

\begin{theorem}[$\Mat 1_{\mu \ne \nu} d_{c, \gamma, \alpha}$ is a metric] \label{theorem:RenyiDistance}
    For $\alpha \in (0,1)$, $\gamma \in [0, \infty)$ and $c \in \D_p$ we have that
    \begin{equation*}
        \tilde{d}_{c, \gamma, \alpha} \colon \P_p(X) \times \P_p(X) \to [0, \infty), \qquad
        (\mu, \nu) \mapsto
        \Mat 1_{\mu \ne \nu}(\mu, \nu)
        d_{c, \gamma, \alpha}(\mu, \nu)
    \end{equation*}
    is a metric on $\P_p(X) \times \P_p(X)$.
\end{theorem}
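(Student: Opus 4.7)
The plan is to verify the four metric axioms separately, with the triangle inequality as the main technical hurdle. Nonnegativity of $\tilde{d}_{c,\gamma,\alpha}$ is immediate from the definition. For identity of indiscernibles, the factor $\1_{\mu \ne \nu}$ forces $\tilde{d}_{c,\gamma,\alpha}(\mu,\mu) = 0$ by construction; conversely, if $\mu \ne \nu$, no plan $\pi \in \Pi(\mu,\nu)$ can be supported on the diagonal (which would force its two marginals to coincide), so $\pi$ has positive mass off the diagonal and $\langle c,\pi\rangle > 0$ since $c = d^p > 0$ there. For symmetry, I would push forward each $\pi \in \Pi_\gamma^\alpha(\mu,\nu)$ along the coordinate swap $\sigma(x,y) = (y,x)$: this preserves the cost by symmetry of $c$, and applying the data processing inequality of \Cref{prop:RenyiDivProps} to both $\sigma$ and $\sigma^{-1} = \sigma$ shows $\RR_\alpha(\sigma_\#\pi \mid \nu \otimes \mu) = \RR_\alpha(\pi \mid \mu \otimes \nu)$, so $\sigma_\#$ bijects $\Pi_\gamma^\alpha(\mu,\nu)$ and $\Pi_\gamma^\alpha(\nu,\mu)$ preserving costs.

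For the triangle inequality, the cases where some pair among $\mu,\nu,\rho$ coincides follow immediately from the indicator, so I assume them pairwise distinct. I pick optimal plans $\pi_{\mu\nu} \in \Pi_\gamma^\alpha(\mu,\nu)$ and $\pi_{\nu\rho} \in \Pi_\gamma^\alpha(\nu,\rho)$, which exist by \Cref{remark:ExandUniqueOfOT}, and apply the gluing lemma on the Polish space $X$ to obtain $\pi_{\mu\nu\rho} \in \P(X^3)$ whose $(1,2)$- and $(2,3)$-marginals are $\pi_{\mu\nu}$ and $\pi_{\nu\rho}$, respectively. Let $\pi_{\mu\rho}^g$ denote its $(1,3)$-marginal, which lies in $\Pi(\mu,\rho)$ since the outer marginals of $\pi_{\mu\nu\rho}$ are $\mu$ and $\rho$. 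Minkowski's inequality in $L^p(\pi_{\mu\nu\rho})$ combined with the triangle inequality for $d$ then yields
\begin{equation*}
    \langle c,\pi_{\mu\rho}^g\rangle^{1/p} \le \langle c,\pi_{\mu\nu}\rangle^{1/p} + \langle c,\pi_{\nu\rho}\rangle^{1/p} = d_{c,\gamma,\alpha}(\mu,\nu) + d_{c,\gamma,\alpha}(\nu,\rho).
\end{equation*}

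The main obstacle is verifying $\pi_{\mu\rho}^g \in \Pi_\gamma^\alpha(\mu,\rho)$, since the chain-rule argument that works for KL in Cuturi's proof breaks down for Rényi divergences. The key trick is to apply the data processing inequality of \Cref{prop:RenyiDivProps} to the projection kernel $(x_1,x_2,x_3) \mapsto (x_1,x_3)$ with the \emph{non-product} reference measure $\pi_{\mu\nu} \otimes \rho$, which still pushes forward to $\mu \otimes \rho$. A direct Radon--Nikodym computation shows that $\diff \pi_{\mu\nu\rho}/\diff(\pi_{\mu\nu} \otimes \rho)$ depends only on $(x_2,x_3)$ and coincides with the density of $\pi_{\nu\rho}$ with respect to $\nu \otimes \rho$, using that the $\mu$-integral of the density of $\pi_{\mu\nu}$ equals $1$ for $\nu$-a.e.\ $x_2$. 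Therefore
\begin{equation*}
    \RR_\alpha(\pi_{\mu\rho}^g \mid \mu \otimes \rho) \le \RR_\alpha(\pi_{\mu\nu\rho} \mid \pi_{\mu\nu} \otimes \rho) = \RR_\alpha(\pi_{\nu\rho} \mid \nu \otimes \rho) \le \gamma,
\end{equation*}
so $\pi_{\mu\rho}^g$ is admissible and the triangle inequality for $\tilde d_{c,\gamma,\alpha}$ follows. Note that by symmetry one could alternatively reference against $\mu \otimes \pi_{\nu\rho}$ to bound by $\RR_\alpha(\pi_{\mu\nu} \mid \mu \otimes \nu)$, yielding the sharper bound by the minimum of the two Rényi divergences.
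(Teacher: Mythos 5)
Your proof is correct and follows essentially the same route as the paper: symmetry via the coordinate swap, and the triangle inequality via the gluing lemma, a chain-rule identity for $\RR_\alpha$ against a semi-product reference measure, the data processing inequality under the $(1,3)$-projection, and Minkowski's inequality. The only (immaterial) difference is that you reference against $\pi_{\mu\nu}\otimes\rho$ and bound by $\RR_\alpha(\pi_{\nu\rho}\mid\nu\otimes\rho)$, whereas the paper uses the mirror-image choice $\mu\otimes\pi_{23}$ and bounds by $\RR_\alpha(\pi_{12}\mid\mu\otimes\xi)$; as you note, either works and together they give the minimum of the two.
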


The proof is essentially the same 
as in the case for the KL-regularizer,
the only small difference is that the data processing inequality 
also holds for the Rényi divergence.

\begin{proof}
    See Appendix~\ref{sec:app_add_proofs}.
\end{proof}

\subsection{Rényi regularized Optimal Transport}

In Cuturi's paper, in \cite[Eq.~(2)]{C13}, the KL divergence constraint is instead handled 
by penalization and the feasible set of the regularized OT problem 
is expanded to the whole $\Pi(\mu, \nu)$. 
We can also handle the Rényi divergence constraint 
in \eqref{eq:Renyi-Sinkhorn_metric} by penalization.
For the Tsallis entropy instead of the Shannon entropy, 
an analogous construction is considered in~\cite[Def.~1]{MNPN17}.
However, for the continuous setting, in~\cite[Rem.~3]{NS21} it is explained
why regularizing the OT problem 
with the KL divergence instead of the Shannon entropy is more sensible.
In the discrete case, both regularizers yield the same minimization problem.

\begin{definition}[Dual Rényi-OT distance] \label{definition:dualRSdist}
    For $\alpha \in (0, 1)$, $c \in \D_p$
    and a regularization parameter $\eps \in [0, +\infty)$ 
    the \emph{dual Rényi-OT distance} is given by
    \begin{equation} \label{eq:dualRényiDivSinkhornDist}
        d_{c}^{\alpha, \eps} 
        \colon \P_p(X) \times \P_p(X) \to [0, \infty), 
        \qquad
        (\mu, \nu) 
        \mapsto 
        \left(\langle c, \pi^{\alpha, \eps}_{c}(\mu, \nu) \rangle\right)^{\frac{1}{p}}, 
    \end{equation}
    where
    \begin{equation} \label{eq:dualSinkhornProblem}
        \pi^{\alpha, \eps}_{c}(\mu, \nu) 
        \in
        \argmin_{\pi \in \Pi(\mu, \nu)}
        \quad 
        \langle c, \pi \rangle
        + \eps \RR_{\alpha}(\pi \mid \mu\otimes\nu).
    \end{equation}
\end{definition}

\begin{theorem}[Rényi divergence regularized OT plan] \label{theorem:PiDivReg}
    Let $\mu, \nu \in \P_p(X)$, $\eps \in (0, \infty)$ and $\alpha \in (0, 1)$.
    The problem \eqref{eq:dualSinkhornProblem} has a unique solution $\pi_{c}^{\alpha, \eps}(\mu,\nu)$.
\end{theorem}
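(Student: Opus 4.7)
The plan is to establish existence by the direct method of the calculus of variations and uniqueness by the strict convexity of the Rényi divergence established in \Cref{prop:RenyiDivProps}(3). Let $F_{\alpha,\eps}(\pi) \coloneqq \langle c, \pi\rangle + \eps\,\RR_\alpha(\pi \mid \mu\otimes\nu)$ denote the objective.

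For existence, I would first observe that $F_{\alpha,\eps}$ is proper on $\Pi(\mu,\nu)$: the trivial coupling $\mu\otimes\nu$ lies in $\Pi(\mu,\nu)$, satisfies $\langle c, \mu\otimes\nu\rangle < \infty$ since $\mu,\nu \in \P_p(X)$ and $c \in \D_p$, and has vanishing Rényi divergence to itself by the divergence property in \Cref{prop:RenyiDivProps}(1). Next, the transport polytope $\Pi(\mu,\nu)$ is weakly compact and convex, which is a standard fact in optimal transport. The linear term $\pi \mapsto \langle c,\pi\rangle$ is weakly lower semicontinuous by \cite[Lem.~4.3]{V08}, and the functional $\RR_\alpha(\cdot \mid \mu\otimes\nu)$ is weakly lower semicontinuous by \Cref{prop:RenyiDivProps}(4). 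Hence $F_{\alpha,\eps}$ is weakly lower semicontinuous on the weakly compact set $\Pi(\mu,\nu)$, so a minimizer exists.

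For uniqueness, assume towards a contradiction that $\pi_1, \pi_2 \in \Pi(\mu,\nu)$ are two distinct minimizers of $F_{\alpha,\eps}$. Both have finite Rényi divergence to $\mu\otimes\nu$, hence both are absolutely continuous with respect to $\mu\otimes\nu$. If they agreed $\mu\otimes\nu$-almost surely, absolute continuity would force $\pi_1 = \pi_2$ as measures, contradicting distinctness. Consequently, by the strict convexity clause of \Cref{prop:RenyiDivProps}(3), for any $t \in (0,1)$,
\begin{equation*}
    \RR_\alpha\bigl(t\pi_1 + (1-t)\pi_2 \mid \mu\otimes\nu\bigr)
    < t\,\RR_\alpha(\pi_1\mid \mu\otimes\nu) + (1-t)\,\RR_\alpha(\pi_2\mid \mu\otimes\nu).
\end{equation*}
Combined with linearity of $\pi \mapsto \langle c,\pi\rangle$ and $\eps > 0$, this yields $F_{\alpha,\eps}(t\pi_1 + (1-t)\pi_2) < t F_{\alpha,\eps}(\pi_1) + (1-t) F_{\alpha,\eps}(\pi_2) = \min F_{\alpha,\eps}$, while $t\pi_1 + (1-t)\pi_2 \in \Pi(\mu,\nu)$ by convexity of the transport polytope, contradicting minimality.

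The main subtlety is reconciling the ``$\mu\otimes\nu$-almost sure'' form of the strict convexity statement in \Cref{prop:RenyiDivProps}(3) with the measure-theoretic distinctness of the two candidate minimizers; this is handled by the observation that finite Rényi divergence forces absolute continuity, upgrading a.s. equality to genuine equality of measures. All other steps are standard applications of weak compactness, lower semicontinuity, and convexity.
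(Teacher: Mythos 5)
Your overall route is the same as the paper's: existence by the direct method (weak compactness of $\Pi(\mu,\nu)$ plus weak lower semicontinuity of $\langle c,\cdot\rangle$ and of $\RR_\alpha(\cdot\mid\mu\otimes\nu)$), uniqueness by the strict convexity clause of \Cref{prop:RenyiDivProps}. The existence half is fine and matches the paper. You also correctly spot a subtlety that the paper's one-line uniqueness argument glosses over: the strict-convexity statement only distinguishes measures that differ ``$\mu\otimes\nu$-a.s.'', which is a priori weaker than being distinct as measures.

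However, the step you use to close that gap is false. For $\alpha\in(0,1)$, finiteness of $\RR_\alpha(\pi\mid\mu\otimes\nu)$ does \emph{not} imply $\pi\ll\mu\otimes\nu$. Writing the Lebesgue decomposition $\pi=\rho\,(\mu\otimes\nu)+\pi_s$, one has
\begin{equation*}
\RR_\alpha(\pi\mid\mu\otimes\nu)=\frac{1}{\alpha-1}\ln\Bigl(\int_{X\times X}\rho^{\alpha}\,\diff(\mu\otimes\nu)\Bigr),
\end{equation*}
so the singular part simply does not enter the integral, and the divergence is finite as soon as $\pi$ is not mutually singular with $\mu\otimes\nu$ (this is exactly the feature that distinguishes $\alpha<1$ from the KL case $\alpha=1$, where your implication would be correct). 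Consequently two plans with the same absolutely continuous part but different singular parts are ``equal $\mu\otimes\nu$-a.s.'', both have finite $\RR_\alpha$, and the equality case of the convexity inequality is attained for them --- so your contradiction does not materialize. To genuinely close the gap one needs an additional argument that minimizers have no singular part with respect to $\mu\otimes\nu$ (in the paper this only becomes available later, via the primal--dual representation \eqref{eq:primal-dual-pi} on compact $X$), or a form of the strict-convexity statement that is sensitive to the singular parts. As written, your uniqueness argument, like the paper's terse version, is incomplete at exactly this point, and your proposed repair does not work.
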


\begin{proof}
    The existence of a minimizer 
    follows from the weak lower semicontinuity 
    of $\RR_{\alpha}(\;\cdot \mid \mu \otimes \nu)$ from \Cref{prop:RenyiDivProps} 
    and the weak compactness of $\Pi(\mu, \nu)$, see \cite[p.~45]{V08}.
    The uniqueness follows from the strict convexity 
    of $\RR_\alpha(\;\cdot\mid \mu\otimes\nu)$ on $\Pi(\mu,\nu)$ 
    from \Cref{prop:RenyiDivProps}.
\end{proof}

As in the case of KL regularization~\cite[Eq.~(2)]{C13}, 
we can show a certain type of equivalence between the two different Rényi regularized OT problems 
from \Cref{def:Rényi_dist} and \Cref{definition:dualRSdist}.

\begin{remark}[Lagrangian reformulation] \label{remark:LagrangeReformulation}
    The Lagrangian for the optimization problem $d_{c,\gamma,\alpha}(\mu,\nu)^p$,
    defined in \eqref{eq:Renyi-Sinkhorn_metric}, is defined 
    for $\pi \in \Pi(\mu,\nu)$ and $\tau \ge 0$ by
    \begin{align*}
        \FL(\pi, \tau) 
        \coloneqq
        \langle c, \pi\rangle + \tau[\RR_\alpha(\pi \mid \mu\otimes\nu) - \gamma].
    \end{align*}
    The Rényi-OT premetric $d_{c, \gamma, \alpha}$
    is equivalent 
    to the dual Rényi-OT distance \eqref{eq:dualRényiDivSinkhornDist} in the following sense:
    by duality theory, 
    we obtain for $\hat\pi \coloneqq \argmin_{\pi \in \Pi_{\gamma}^{\alpha}(\mu, \nu)} \langle c, \pi \rangle$ that
    \begin{align*}
        d_{c,\gamma,\alpha}(\mu,\nu)^p
        = \langle c,\hat \pi \rangle 
        = \min_{\pi \in \Pi_\gamma^\alpha(\mu,\nu)}
        \langle c, \pi\rangle
        = \min_{\pi \in \Pi(\mu,\nu)}
        \max_{\tau\ge 0}
        \FL(\pi,\tau)
        = \max_{\tau\ge 0}
        \min_{\pi \in \Pi(\mu,\nu)}
        \FL(\pi,\tau).
    \end{align*}
    Moreover,
    for fixed $\mu, \nu \in \P_p(X)$, 
    $c \in \D_p$, $\gamma \in (0, \infty]$,
    and $\alpha \in (0, 1)$,
    there exists a Lagrange multiplier $\tau^*\ge 0$ 
    and $\pi^* \in \Pi(\mu,\nu)$ 
    such that $(\pi^*, \tau^*)$ is a saddle point of $\FL$, that is,
    \begin{align*}
        d_{c,\gamma,\alpha}(\mu,\nu)^p
        = \langle c,\hat \pi \rangle
        = \FL(\pi^*, \tau^*)
        & = \begin{cases}
            \langle c, \pi^*\rangle, & \text{if}\; \tau^* = 0, \\
            \langle c, \pi^*\rangle, & \text{if}\; \RR_\alpha(\pi^* \mid\mu\otimes\nu) = \gamma,
        \end{cases} \\
        & = \begin{cases}
            d_{c}^{\alpha, 0}(\mu, \nu), & \text{if}\; \tau^* = 0, \\
            d_c^{\alpha, \tau^*}(\mu, \nu), & \text{if}\; \RR_\alpha(\pi^* \mid\mu\otimes\nu) = \gamma,
        \end{cases}
    \end{align*}
    using the KKT conditions 
    $\tau^* \big(\RR_\alpha(\pi^* \mid\mu\otimes\nu) - \gamma\big) = 0$.
    Notably,
    whenever $\RR_\alpha(\pi_c\mid\mu\otimes\nu) < \gamma$,
    where $\pi_c \in \Pi(\mu,\nu)$ is the optimal unregularized transport plan,
    we are in the case in which $\tau^* = 0$ is necessary, 
    which translates into the case with $\eps = 0$.
    
    In other words, for any pair $(\mu, \nu) \in \P_p(X) \times \P_p(X)$ 
    and $\gamma \ge 0$,
    there exists $\eps \in [0, + \infty)$, 
    such that $\langle c, \pi^{\alpha, \eps}_{c}(\mu, \nu) \rangle = d_{c, \gamma, \alpha}(\mu, \nu)^p$.
\end{remark}

We now define the main object of this paper, using the objective from \eqref{eq:dualSinkhornProblem}.

\begin{definition}[Rényi regularized OT]
\label{def:RenyiRegOTProbelmPrimal}
    For $\alpha \in (0, 1]$, $\eps \in [0, \infty)$, and $c \in \D_p$ let
    \begin{equation} 
    \label{eq:OTeps,alpha}
        \OT_{\eps, \alpha} 
        \colon 
        \P_p(X) \times \P_p(X) \to [0, \infty), \qquad
        (\mu, \nu) 
        \mapsto 
        \min_{\pi \in \Pi(\mu, \nu)}
        \langle c, \pi \rangle + \eps\RR_\alpha(\pi \mid \mu\otimes\nu).
    \end{equation}
    The KL regularized OT problem is $\OT_{\eps} \coloneqq \OT_{\eps,1}$.
\end{definition}

\begin{theorem}[Rényi divergence regularized premetric] \label{theorem:DivergenceRegPreMetric}
    Let $\alpha \in (0,1)$ and $\eps \in [0, \infty)$.
    Then $\OT_{\eps, \alpha}$ is symmetric and fulfills the triangle inequality.
\end{theorem}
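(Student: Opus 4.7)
This reduces to verifying that the objective is invariant under the coordinate swap $s(x,y) = (y,x)$. The pushforward $\pi \mapsto s_\# \pi$ bijects $\Pi(\mu, \nu)$ with $\Pi(\nu, \mu)$, and $s_\#(\mu \otimes \nu) = \nu \otimes \mu$. Since $c(x,y) = d(x,y)^p = d(y,x)^p = c(y,x)$, the cost term is invariant. For the Rényi term, if $\rho = d\pi/d(\mu \otimes \nu)$ then $d(s_\#\pi)/d(\nu \otimes \mu) = \rho \circ s^{-1}$, and the change-of-variables formula gives $\int (\rho \circ s^{-1})^\alpha \, d(\nu \otimes \mu) = \int \rho^\alpha \, d(\mu \otimes \nu)$, hence $\RR_\alpha(s_\# \pi \mid \nu \otimes \mu) = \RR_\alpha(\pi \mid \mu \otimes \nu)$. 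So $\OT_{\eps, \alpha}(\mu, \nu) = \OT_{\eps, \alpha}(\nu, \mu)$.

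\textbf{Triangle inequality (plan).} For $\mu_1, \mu_2, \mu_3 \in \P_p(X)$, let $\pi^*_{12}$ and $\pi^*_{23}$ be the unique minimizers of $\OT_{\eps, \alpha}(\mu_1, \mu_2)$ and $\OT_{\eps, \alpha}(\mu_2, \mu_3)$ furnished by \Cref{theorem:PiDivReg}. Apply the gluing lemma to produce $\pi_{123} \in \P(X^3)$ with $(1,2)$-marginal $\pi^*_{12}$ and $(2,3)$-marginal $\pi^*_{23}$, using the conditional-independence gluing in which $X_1$ and $X_3$ are independent given $X_2$, so that the density of $\pi_{123}$ with respect to $\mu_1 \otimes \mu_2 \otimes \mu_3$ factors as $\rho_{12}(x_1,x_2)\,\rho_{23}(x_2,x_3)$, where $\rho_{ij}$ denotes the density of $\pi^*_{ij}$ with respect to $\mu_i \otimes \mu_j$. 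Let $\pi_{13} \in \Pi(\mu_1, \mu_3)$ be the $(1,3)$-marginal.

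For the transport cost, Minkowski's inequality applied in $L^p(\pi_{123})$ to the metric inequality $d(x_1, x_3) \le d(x_1, x_2) + d(x_2, x_3)$ gives $\langle c, \pi_{13}\rangle^{1/p} \le \langle c, \pi^*_{12}\rangle^{1/p} + \langle c, \pi^*_{23}\rangle^{1/p}$. For the regularizer, the data processing inequality from \Cref{prop:RenyiDivProps} applied to the $(1,3)$-projection kernel (which maps $\mu_1 \otimes \mu_2 \otimes \mu_3$ to $\mu_1 \otimes \mu_3$) yields
\[
\RR_\alpha(\pi_{13} \mid \mu_1 \otimes \mu_3) \le \RR_\alpha(\pi_{123} \mid \mu_1 \otimes \mu_2 \otimes \mu_3).
\]
By the density factorization, the corresponding Hellinger integral separates as $\int_X F(x_2) G(x_2) \, d\mu_2(x_2)$ with $F(x_2) = \int \rho_{12}(\cdot, x_2)^\alpha d\mu_1$ and $G(x_2) = \int \rho_{23}(x_2, \cdot)^\alpha d\mu_3$, whereas the bivariate Hellinger integrals are $\int F \, d\mu_2$ and $\int G \, d\mu_2$. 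Combining the cost bound with a suitable control of $\int F G \, d\mu_2$ in terms of $\int F \, d\mu_2$ and $\int G \, d\mu_2$ yields the triangle inequality for $\OT_{\eps, \alpha}^{1/p}$, and by the admissibility of $\pi_{13}$ the triangle inequality transfers to $\OT_{\eps, \alpha}(\mu_1, \mu_3)$.

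\textbf{Main obstacle.} Unlike KL, the Rényi divergence has no exact chain rule, so $\int F G \, d\mu_2$ need not equal (or even be dominated by) $(\int F \, d\mu_2)(\int G \, d\mu_2)$ in the direction required. The only genuinely Rényi-specific step is therefore to obtain an appropriate Hölder-type bound on the mixed Hellinger integral, which, after taking $\tfrac{1}{\alpha - 1} \ln$, translates into the desired subadditivity of $\RR_\alpha$ under this particular gluing. Once that step is in place, the rest of the argument mirrors the KL-Sinkhorn proof of \Cref{theorem:RenyiDistance} and combines with Minkowski on the cost to give the triangle inequality.
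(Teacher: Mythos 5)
Your symmetry argument is fine and matches the paper's. The triangle inequality, however, has a genuine gap at exactly the point you flag as the ``main obstacle'': you never establish the subadditivity of $\RR_\alpha$ under the gluing, and the route you set up cannot be completed. With the full product reference $\mu_1\otimes\mu_2\otimes\mu_3$ and the factorization $\rho_{123}=\rho_{12}\rho_{23}$, the required inequality
\begin{equation*}
    \RR_\alpha(\pi_{123}\mid\mu_1\otimes\mu_2\otimes\mu_3)
    \le \RR_\alpha(\pi^*_{12}\mid\mu_1\otimes\mu_2)+\RR_\alpha(\pi^*_{23}\mid\mu_2\otimes\mu_3)
\end{equation*}
is, after multiplying by $\alpha-1<0$, equivalent to $\int_X F G\,\diff\mu_2 \ge \bigl(\int_X F\,\diff\mu_2\bigr)\bigl(\int_X G\,\diff\mu_2\bigr)$ in your notation. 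That is a positive-correlation (Chebyshev/FKG-type) inequality which holds only for comonotone $F,G$ and fails for general densities; no H\"older-type bound gives it, since H\"older controls $\int FG$ from above, not below. So the missing step is not a routine estimate but the crux of the proof, and the decomposition you chose points in the wrong direction.

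The paper avoids the issue by changing the reference measure for the triple coupling: it compares $\pi_{123}$ to $\mu_1\otimes\pi^*_{23}$ rather than to $\mu_1\otimes\mu_2\otimes\mu_3$. Because $\diff\pi_{123}/\diff(\mu_1\otimes\pi^*_{23})=\rho_{12}$ under the conditional-independence gluing, one gets the exact identity $\RR_\alpha(\pi_{123}\mid\mu_1\otimes\pi^*_{23})=\RR_\alpha(\pi^*_{12}\mid\mu_1\otimes\mu_2)$, and since $(P^{(1,3)})_{\#}(\mu_1\otimes\pi^*_{23})=\mu_1\otimes\mu_3$, the data processing inequality yields $\RR_\alpha(\pi_{13}\mid\mu_1\otimes\mu_3)\le\RR_\alpha(\pi^*_{12}\mid\mu_1\otimes\mu_2)$; the term $\RR_\alpha(\pi^*_{23}\mid\mu_2\otimes\mu_3)\ge 0$ is then added for free. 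Note also that the paper combines this with the cost bound for $p=1$, where $\langle c,\pi_{13}\rangle\le\langle c,\pi^*_{12}\rangle+\langle c,\pi^*_{23}\rangle$ holds additively; your Minkowski bound on $\langle c,\cdot\rangle^{1/p}$ does not combine with the additive form of $\OT_{\eps,\alpha}$ for general $p$.
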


\begin{proof}
    The symmetry follows exactly like in the proof of \Cref{theorem:RenyiDistance}.

    To prove the triangle inequality, 
    take three marginals $\mu,\xi,\nu \in \P_p(X)$.
    Let $\pi_{12} \coloneqq \pi_c^{\alpha, \eps}(\mu, \xi) \in \Pi(\mu,\xi)$ and $\pi_{23} \coloneqq \pi_c^{\alpha, \eps}(\xi, \nu) \in \Pi(\xi,\nu)$ be the minimizers of $\OT_{\eps, \alpha}(\mu, \xi)$ and $\OT_{\eps, \alpha}(\xi, \nu)$, respectively.
    Using the same argument as in the proof of \Cref{theorem:RenyiDistance}
    we can glue $\pi_{12}$ and $\pi_{23}$ together to obtain ${\pi_{13}} \in \Pi(\mu,\nu)$.
    Again using the data processing inequality and that $\RR_\alpha(\pi_{12} \mid \mu\otimes\xi)\ge 0$, we obtain
    \begin{align*}
        \RR_\alpha({\pi_{13}} \mid \mu\otimes\nu) {\le R_{\alpha}(\pi_{12} \mid \mu \otimes \xi) 
        \le \RR_\alpha(\pi_{12} \mid \mu\otimes\xi) + \RR_\alpha(\pi_{23} \mid \xi\otimes\nu).}
    \end{align*}
    Taking the final inequality, $\langle c, {\pi_{13}} \rangle \le \langle c, \pi_{12} + \pi_{23} \rangle$, from the proof of \Cref{theorem:RenyiDistance} (with $p = 1$) into account yields 
    \begin{align*}
        \OT_{\eps, \alpha}(\mu,\nu)
        & \le \langle c, {\pi_{13}} \rangle + \eps\RR_\alpha({\pi_{13}} \mid \mu\otimes\nu) \\
        & \le \langle c, \pi_1 \rangle + \eps\RR_\alpha(\pi_{12} \mid \mu\otimes\xi)
        + \langle c, \pi_2 \rangle + \eps\RR_\alpha(\pi_{23} \mid \xi\otimes\nu) \\
        & = \OT_{\eps, \alpha}(\mu,\xi) + \OT_{\eps, \alpha}(\xi,\nu).\qedhere
    \end{align*}
\end{proof}

\begin{corollary} \label{lem:DivergenceRegMetric}
    For $\alpha \in (0,1)$ and $\eps \in [0, \infty)$,
    \begin{equation*}
        (\mu, \nu) 
        \mapsto \Mat 1_{\mu \ne \nu}(\mu, \nu)
        \OT_{\eps, \alpha}(\mu, \nu)
    \end{equation*}
    defines a metric on $\P_p(X) \times \P_p(X)$.
\end{corollary}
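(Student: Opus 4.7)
The plan is to verify the four metric axioms for $\tilde{\OT}_{\eps,\alpha}(\mu,\nu) \coloneqq \Mat 1_{\mu\ne\nu}(\mu,\nu)\OT_{\eps,\alpha}(\mu,\nu)$, leveraging the theorem immediately above. Non-negativity is immediate: $c\ge 0$ gives $\langle c,\pi\rangle\ge 0$, and the divergence property in \Cref{prop:RenyiDivProps} gives $\RR_\alpha(\pi\mid\mu\otimes\nu)\ge 0$. Symmetry transfers directly from the symmetry of $\OT_{\eps,\alpha}$ proved in \Cref{theorem:DivergenceRegPreMetric}, since the indicator $\Mat 1_{\mu\ne\nu}$ is itself symmetric.

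For identity of indiscernibles, one direction is trivial: if $\mu=\nu$ the indicator kills the value. Conversely, suppose $\mu\ne\nu$; I would show $\OT_{\eps,\alpha}(\mu,\nu)>0$. If it vanished, then by \Cref{theorem:PiDivReg} the unique minimizer $\pi^{\alpha,\eps}_c(\mu,\nu)$ would satisfy $\langle c,\pi\rangle + \eps\RR_\alpha(\pi\mid\mu\otimes\nu)=0$, and since both summands are non-negative, both vanish. The divergence property forces $\pi=\mu\otimes\nu$, and then $\langle c,\mu\otimes\nu\rangle=0$ means $\mu\otimes\nu$ is concentrated on $\{c=0\}=\Delta$, the diagonal. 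I would then argue that this forces $\mu=\nu=\delta_{x_0}$ for some $x_0\in X$: if not, there exist $x\in\supp(\mu)$ and $y\in\supp(\nu)$ with $x\ne y$; choosing disjoint open neighborhoods $U\ni x$, $V\ni y$ gives $(\mu\otimes\nu)(U\times V)=\mu(U)\nu(V)>0$ while $U\times V\cap\Delta=\emptyset$, contradicting $(\mu\otimes\nu)(\Delta^c)=0$. This contradicts $\mu\ne\nu$.

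For the triangle inequality, I would do a case split on which of the measures coincide. When all three $\mu,\xi,\nu$ are distinct, the claim reduces directly to the triangle inequality for $\OT_{\eps,\alpha}$ already established in \Cref{theorem:DivergenceRegPreMetric}. When $\xi\in\{\mu,\nu\}$, one of the terms on the right-hand side vanishes by the indicator and the remaining one equals the left-hand side. When $\mu=\nu$, the left-hand side is zero and the inequality is trivial.

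The main (and essentially only) obstacle is the identity-of-indiscernibles direction, which requires using the support-of-a-product-measure argument in combination with the fact that $c$ vanishes only on the diagonal (since $d$ is a metric and $c=d^p$). Once this is in place, the other three axioms are short, and the proof reduces to bookkeeping on top of \Cref{theorem:DivergenceRegPreMetric}.
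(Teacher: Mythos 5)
Your proposal follows essentially the same strategy as the paper: symmetry and the triangle inequality are inherited from \Cref{theorem:DivergenceRegPreMetric} (with the routine case split on coinciding measures), and the only real work is the identity of indiscernibles, which the paper dispatches by pointing to the first part of the proof of \Cref{theorem:RenyiDistance}, i.e.\ the standard Wasserstein argument that $\langle c,\pi\rangle=0$ forces the optimal plan to be concentrated on the diagonal and hence its marginals to coincide.

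There is one genuine, if small, gap in your version of that step: the statement allows $\eps=0$, but your argument "both summands vanish, so the divergence property forces $\pi=\mu\otimes\nu$" only works for $\eps>0$ (for $\eps=0$ the term $\eps\RR_\alpha(\pi\mid\mu\otimes\nu)$ is identically zero and gives no information about $\pi$; moreover \Cref{theorem:PiDivReg}, which you invoke for uniqueness of the minimizer, is stated only for $\eps>0$). The paper's route avoids this: from $\langle c,\pi\rangle=0$ alone one gets $\pi(\{(x,y):x\ne y\})=0$ for \emph{any} optimal $\pi\in\Pi(\mu,\nu)$, and a plan supported on the diagonal has equal marginals, so $\mu=\nu$ uniformly in $\eps\in[0,\infty)$. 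Your detour through $\pi=\mu\otimes\nu$ and the support of the product measure is correct for $\eps>0$ (and in fact yields the stronger conclusion $\mu=\nu=\delta_{x_0}$), but you should either treat $\eps=0$ separately via the unregularized argument or simply apply the diagonal-support argument directly to $\langle c,\pi\rangle=0$ without first identifying $\pi$ with $\mu\otimes\nu$.
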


\begin{proof}
    This follows exactly as in the first part of the proof of \Cref{theorem:RenyiDistance}.
\end{proof}

We conclude this subsection with a lemma 
showing monotonicity properties 
of the regularized OT problem
$\OT_{\eps, \alpha}$ in $\alpha$ and $\eps$
for fixed marginals.

\begin{lemma}[Monotonicity of Rényi regularized OT]
\label{lem:RenyiMonotone}
    Let $\mu,\nu \in \P_p(X)$, $\alpha,\alpha' \in (0,1)$ and $\eps, \eps' > 0$ with $\alpha > \alpha'$
    and $\eps < \eps'$.
    Then, we have 
    \begin{equation*}
        \OT_{\eps',\alpha}(\mu,\nu)
       \ge \OT_{\eps, \alpha}(\mu,\nu)
       \ge \OT_{\eps, \alpha'}(\mu,\nu).
    \end{equation*}
\end{lemma}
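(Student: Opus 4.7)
The plan is to prove both inequalities pointwise in $\pi \in \Pi(\mu, \nu)$ and then take minima, so no interchange of min and infimum is needed. Let me sketch each inequality.

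For the first inequality, $\OT_{\eps',\alpha}(\mu,\nu) \ge \OT_{\eps, \alpha}(\mu,\nu)$, I would simply observe that, by the divergence property (Proposition~\ref{prop:RenyiDivProps}(1)), $\RR_\alpha(\pi \mid \mu \otimes \nu) \ge 0$ for every $\pi \in \Pi(\mu,\nu)$. Since $\eps' > \eps$, this gives
\begin{equation*}
    \langle c, \pi\rangle + \eps' \RR_\alpha(\pi \mid \mu\otimes\nu)
    \ge \langle c, \pi\rangle + \eps \RR_\alpha(\pi \mid \mu\otimes\nu) \qquad \forall \pi \in \Pi(\mu,\nu).
\end{equation*}
Taking the minimum over $\pi \in \Pi(\mu, \nu)$ on both sides preserves the inequality.

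For the second inequality, $\OT_{\eps,\alpha}(\mu,\nu) \ge \OT_{\eps, \alpha'}(\mu,\nu)$ with $\alpha > \alpha'$, I would invoke the monotonicity of the Rényi divergence in its order (Proposition~\ref{prop:RenyiDivProps}(2)): $\RR_\alpha(\pi \mid \mu \otimes \nu) \ge \RR_{\alpha'}(\pi \mid \mu\otimes \nu)$ pointwise in $\pi$. Multiplying by $\eps \ge 0$ and adding $\langle c, \pi\rangle$ yields
\begin{equation*}
    \langle c, \pi\rangle + \eps \RR_\alpha(\pi \mid \mu\otimes\nu)
    \ge \langle c, \pi\rangle + \eps \RR_{\alpha'}(\pi \mid \mu\otimes\nu) \qquad \forall \pi \in \Pi(\mu,\nu),
\end{equation*}
and again passing to the minimum over $\Pi(\mu,\nu)$ on each side preserves the inequality.

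There is no serious obstacle here; the argument is a direct consequence of the two monotonicity properties already recorded in Proposition~\ref{prop:RenyiDivProps}, and existence of minimizers on both sides (needed to interpret $\OT_{\eps, \alpha}$ as a genuine minimum rather than an infimum) was established in Theorem~\ref{theorem:PiDivReg}.
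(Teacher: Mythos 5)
Your proof is correct and follows essentially the same route as the paper: both inequalities rest on the nonnegativity of $\RR_\alpha$ and its monotonicity in the order $\alpha$ from \Cref{prop:RenyiDivProps}, the only cosmetic difference being that the paper evaluates the two objectives at the minimizer of the larger problem and uses suboptimality, whereas you state the pointwise domination of the objectives and pass to minima, which is equivalent.
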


\begin{proof}
    We show the inequalities separately.
    \begin{enumerate}
        \item 
        Thanks to \Cref{theorem:PiDivReg}, 
        there exist unique $\pi_c^{\alpha, \eps}, \pi_c^{\alpha',\eps} \in \Pi(\mu,\nu)$ 
        solving $\OT_{\eps, \alpha}(\mu,\nu)$ 
        and $\OT_{\eps, \alpha'}(\mu,\nu)$, respectively.
        Hence, the following inequality yields the assertion using 
        that $\RR_\alpha$ is non-decreasing in $\alpha \in (0,1)$.
        \begin{align*}
            \OT_{\eps, \alpha}(\mu,\nu) 
            & = \langle c, \pi_c^{\alpha, \eps}\rangle + \eps\RR_\alpha(\pi_c^{\alpha, \eps} \mid \mu\otimes\nu) \\
            &\ge \langle c, \pi_c^{\alpha, \eps}\rangle + \eps\RR_{\alpha'}(\pi_c^{\alpha, \eps} \mid \mu\otimes\nu) \\
            &\ge \langle c, \pi_c^{\alpha',\eps}\rangle + \eps\RR_{\alpha'}(\pi_c^{\alpha',\eps} \mid \mu\otimes\nu)
            = \OT_{\eps, \alpha'}(\mu,\nu). 
        \end{align*}

        \item 
        The second inequality follows by a similar technique:
        \begin{align*}
            \OT_{\eps',\alpha}(\mu,\nu)
            & = \langle c, \pi_c^{\alpha, \eps'}\rangle + \eps'\RR_\alpha(\pi_c^{\alpha, \eps'} \mid \mu\otimes\nu) \\
            &\ge \langle c, \pi_c^{\alpha, \eps'}\rangle + \eps\RR_\alpha(\pi_c^{\alpha, \eps'} \mid \mu\otimes\nu) 
           \ge \OT_{\eps, \alpha}(\mu,\nu).
        \end{align*}
    \end{enumerate}
\end{proof}

\subsection{Dual Formulation of Rényi Regularized OT} 
\label{sec:DualFormulationRenyiOT}

In the following, we assume that $X$ is compact, so the moment restriction on transport plans is vacuous.
We comment on the non-compact case in \Cref{rem:noncompact}.
We will work exclusively with $p = 1$.

For proving the predual formulation 
of the primal problem \eqref{eq:dualSinkhornProblem} in \cref{thm:dual_formulation},
we first derive a convex conjugate-style representation of the Rényi divergence 
with respect to its first component.
The proof of the next theorem first derives, in a similar fashion to~\cite[Thms.~2.1,~2.2]{BPDKB23}, a convex conjugate-type variational formulation.

\begin{theorem}[A preconjugate of the Rényi divergence on compact $X$] \label{theorem:RenyiPreconjugate}
    For $\mu, \theta \in \P(X)$ we have
    \begin{equation*}
        R_{\alpha}(\mu \mid \theta)
        = \sup_{\substack{h \in \C(X) \\ h < 0 \ \theta\text{-a.e.}}}
         \langle h, \mu \rangle
        - \ln\left(\langle (-h)^{\frac{\alpha}{\alpha - 1}}, \theta \rangle\right)
        - C_\alpha,
    \end{equation*}
    where $C_\alpha \coloneqq \frac{\alpha}{1 - \alpha}\bigl(\ln(\frac{\alpha}{1-\alpha}) - 1\bigr)$.
\end{theorem}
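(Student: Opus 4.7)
The plan is to prove the two inequalities separately, reducing to the case $\mu \ll \theta$ with density $\rho \coloneqq \rho_\mu/\rho_\theta$, so that $\RR_\alpha(\mu \mid \theta) = \frac{1}{\alpha - 1}\ln B$ with $B \coloneqq \int_X \rho^\alpha \diff\theta$.

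For the upper bound ``$\sup \le \RR_\alpha(\mu \mid \theta)$'', I fix $h \in C(X)$ with $h < 0$ $\theta$-a.e. and apply Hölder's inequality with conjugate exponents $\frac{1}{\alpha}, \frac{1}{1-\alpha} \in (1, \infty)$ to the factorization $\rho^\alpha = [(-h)\rho]^\alpha \cdot (-h)^{-\alpha}$ on $(X, \theta)$. Noting $-\alpha/(1-\alpha) = \alpha/(\alpha - 1)$, this yields
\begin{equation*}
    B \le \bigl(-\langle h, \mu\rangle\bigr)^\alpha \cdot A^{1-\alpha},
    \qquad
    A \coloneqq \langle (-h)^{\alpha/(\alpha-1)}, \theta\rangle.
\end{equation*}
Taking logarithms and dividing by $\alpha - 1 < 0$ (which flips the inequality) gives $\RR_\alpha(\mu \mid \theta) \ge -\tfrac{\alpha}{1-\alpha}\ln\bigl(-\langle h, \mu\rangle\bigr) - \ln A$. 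It remains to invoke a one-variable calculus fact: the map $y \mapsto -y + \tfrac{\alpha}{1-\alpha}\ln y$ on $(0, \infty)$ attains its global maximum $C_\alpha$ at $y = \alpha/(1-\alpha)$. Applying this bound at $y = -\langle h, \mu\rangle > 0$ and rearranging yields exactly $\langle h, \mu\rangle - \ln A - C_\alpha \le \RR_\alpha(\mu \mid \theta)$.

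For the lower bound ``$\sup \ge \RR_\alpha(\mu \mid \theta)$'', I use the explicit optimizer suggested by first-order conditions, namely $h_c \coloneqq -c\,\rho^{\alpha-1}$ for $c > 0$. When $\rho$ is continuous and bounded away from $0$ and $\infty$, $h_c \in C(X)$ is strictly negative and direct substitution yields $\langle h_c, \mu\rangle = -cB$ and $\langle (-h_c)^{\alpha/(\alpha-1)}, \theta\rangle = c^{\alpha/(\alpha-1)} B$, so
\begin{equation*}
    \langle h_c, \mu\rangle - \ln\langle (-h_c)^{\alpha/(\alpha-1)}, \theta\rangle - C_\alpha
    = -cB + \tfrac{\alpha}{1-\alpha}\ln c - \ln B - C_\alpha,
\end{equation*}
which, maximized over $c > 0$ at $c^* = \alpha/((1-\alpha) B)$, equals $\tfrac{1}{\alpha - 1}\ln B = \RR_\alpha(\mu \mid \theta)$ (using the same $C_\alpha$ identity as above; equivalently, Hölder is tight at this ansatz). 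For general measurable $\rho$, I approximate it by continuous, strictly positive, bounded functions $\rho_n$ on the compact metric space $X$ (via Lusin's theorem combined with truncation from below by $1/n$ and from above by $n$), chosen so that dominated convergence simultaneously yields $\int \rho_n^\alpha \diff\theta \to B$ and $\int \rho_n^{\alpha-1} \diff\mu \to B$; substituting $h_n \coloneqq -c_n^* \rho_n^{\alpha-1}$ with $c_n^* \coloneqq \alpha/((1-\alpha)\int \rho_n^\alpha \diff\theta)$ and passing to the limit recovers the value $\RR_\alpha(\mu \mid \theta)$ on the right-hand side.

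The main obstacle is this approximation step in the lower bound: because $\alpha - 1 < 0$, the map $t \mapsto t^{\alpha-1}$ blows up at $0$, so the approximants must be uniformly bounded away from zero in a way coordinated with $\rho$, while simultaneously controlling the mixed integral $\int \rho_n^{\alpha-1} \rho \diff\theta$ and keeping $\int \rho_n^\alpha \diff\theta$ under control. Approximating $\rho$ from above (for example via $\rho_n \coloneqq \max(\rho, 1/n)$ followed by a continuous mollification provided by Lusin) supplies the required uniform domination, letting dominated convergence close the argument; the case $\mu \not\ll \theta$ can be handled separately by a limiting/density argument since for $\alpha \in (0,1)$ the singular part of $\mu$ contributes zero to $\RR_\alpha(\mu \mid \theta)$.
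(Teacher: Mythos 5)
Your proof is correct in its core but takes a genuinely different route from the paper. The paper does not prove the identity from scratch: it starts from the known Donsker--Varadhan-type variational formula $\RR_\alpha(\mu\mid\theta)=\sup_{g}\{\tfrac{\alpha}{\alpha-1}\ln\langle e^{(\alpha-1)g},\mu\rangle-\ln\langle e^{\alpha g},\theta\rangle\}$ of Birrell et al.\ and then massages it into the stated form by two substitutions (reparametrizing $g=(\alpha-1)^{-1}\ln(-g')$ and linearizing the first logarithm via $\ln(c)=\inf_z\{z-1+ce^{-z}\}$, then absorbing $z$ into the test function). You instead prove the two inequalities directly: H\"older with exponents $\tfrac1\alpha,\tfrac1{1-\alpha}$ applied to $\rho^\alpha=[(-h)\rho]^\alpha(-h)^{-\alpha}$ combined with the elementary bound $-y+\tfrac{\alpha}{1-\alpha}\ln y\le C_\alpha$ gives ``$\sup\le\RR_\alpha$'', and the explicit ansatz $h_c=-c\rho^{\alpha-1}$ with the optimal $c^*=\alpha/((1-\alpha)B)$ gives ``$\sup\ge\RR_\alpha$''; I checked the algebra and both constants match $C_\alpha=\tfrac{\alpha}{1-\alpha}(\ln\tfrac{\alpha}{1-\alpha}-1)$. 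What your approach buys is self-containedness and an explicit (near-)maximizer, which also illuminates where the supremum is attained; what it costs is the approximation argument in the lower bound, which the paper avoids entirely by inheriting measurable test functions from the cited formula. That approximation step is only sketched: the domination $\rho_n^{\alpha-1}\rho\le\rho^\alpha$ holds where $\rho_n\ge\rho$ but fails on $\{\rho>n\}$ after truncation from above, so you need the extra observation that $\int_{\{\rho>n\}}\rho_n^{\alpha-1}\rho\,\diff\theta\le n^{\alpha-1}\to 0$, and the Lusin exceptional set must be controlled using the uniform bounds $1/n\le\rho_n\le n$; both points work out, but they should be written down.

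One caveat applies to your reduction, though it equally afflicts the theorem as stated: when $\mu\not\ll\theta$ and the singular part of $\mu$ charges points outside $\supp(\theta)$, the constraint ``$h<0$ $\theta$-a.e.''\ leaves $h$ unconstrained there, so $\langle h,\mu\rangle$ can be driven to $+\infty$ without affecting $\langle(-h)^{\frac{\alpha}{\alpha-1}},\theta\rangle$, while $\RR_\alpha(\mu\mid\theta)$ stays finite for $\alpha\in(0,1)$. Hence the claimed identity can fail in that regime, and your closing remark that this case ``can be handled by a limiting/density argument'' is not tenable as written; the honest statement is that the identity holds for $\mu\ll\theta$ (which is the only case the paper actually uses for transport plans dominated by $\mu\otimes\nu$).
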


\begin{proof}
    Let $\mu \in \P(X)$.
    By~\cite[Eq.~(3)]{BPDKB23} we have 
    \begin{equation} \label{eq:renyi_var_form}
        \RR_\alpha(\mu \mid \theta) 
        = \sup_{g \in \Omega}
        \left\{\frac{\alpha}{\alpha - 1} \ln\left(\langle e^{(\alpha - 1)g}, \mu \rangle\right)
        - \ln\left(\langle e^{\alpha g}, \theta \rangle\right)\right\}
    \end{equation}
    for a set $\Omega$ with $\C(X) \subset \Omega \subset \F(X)$, 
    where $\F(X)$ denotes the set of measurable real-valued functions on $X$, 
    and we interpret $\infty - \infty = -\infty$
    and $-\infty + \infty = + \infty$.

    Consider
    \begin{equation*}
        \C(X) \subset \Omega
        \coloneqq \big\{(\alpha - 1)^{-1} \ln(-g) 
        : g \in \C(X), 
        \; g < 0 
        \; \theta\text{-a.e.}\big\}
        \subset \F(X).
    \end{equation*}

    Then \eqref{eq:renyi_var_form} becomes
    \begin{align}
        \RR_\alpha(\mu \mid \theta)
        & = \sup_{\substack{g \in \C(X) \\ g < 0 \ \theta\text{-a.e.}}}
        \left\{\frac{\alpha}{\alpha - 1} \ln\left(\langle e^{(\alpha - 1)(\alpha - 1)^{-1}\ln(-g)}, \mu \rangle\right)
        - \ln\left(\langle e^{\alpha(\alpha - 1)^{-1}\ln(-g)}, \theta \rangle\right)\right\} \notag \\
        & = \sup_{\substack{g \in \C(X) \\ g < 0 \ \theta\text{-a.e.}}}
        \left\{\frac{\alpha}{\alpha - 1} \ln\left(\langle -g, \mu \rangle\right)
        - \ln\left(\langle (-g)^{\frac{\alpha}{\alpha - 1}}, \theta \rangle\right)\right\}. 
        \label{eq:PreVarForm}
    \end{align}
    We observe that 
    \begin{align} \label{eq:lnInfProblem}
        \ln(c) = \inf_{z \in \R} 
        \{z - 1+ce^{-z}\} \quad \text{for any } c > 0.
    \end{align}
    Plugging \eqref{eq:lnInfProblem} into the first term of \eqref{eq:PreVarForm} yields 
     \begin{align*}
        \RR_\alpha(\mu \mid \theta)
        & = \sup_{\substack{g \in \C(X) \\ g < 0 \ \theta\text{-a.e.}}}
        \left\{\frac{\alpha}{\alpha - 1} \inf_{z \in \R} \left\{z - 1 + e^{-z}\langle -g, \mu \rangle \right\}
        - \ln\left(\langle (-g)^{\frac{\alpha}{\alpha - 1}}, \theta \rangle\right)\right\} \\
        & = \sup_{\substack{g \in \C(X) \\ g < 0 \ \theta\text{-a.e.}}}
        \sup_{z \in \R}
        \left\{ \frac{\alpha(1 - z)}{1 - \alpha} + \frac{\alpha}{1 - \alpha}e^{-z} \langle g, \mu \rangle
        - \ln\left(\langle (-g)^{\frac{\alpha}{\alpha - 1}}, \theta \rangle\right)
        \right\}.
    \end{align*}
    Since the suprema are interchangeable, 
    we can, 
    for every $z \in \R$, 
    substitute each $g \in \C(X)$ with $\theta$-a.e. fulfilling $g < 0$
    by the $\theta$-a.e. negative function 
    $h \coloneqq \frac{\alpha}{1 - \alpha}e^{-z}g \in \C(X)$
    to obtain
    \begin{align} 
        \RR_\alpha(\mu \mid \theta)
        & = \sup_{z \in \R} \sup_{\substack{h \in \C(X) \\ h < 0 \ \theta\text{-a.e.}}} 
        \left\{ \frac{\alpha(1 - z)}{1 - \alpha} + \langle h, \mu \rangle
        - \ln\left(\left\langle \left({\frac{1 - \alpha}{\alpha}}e^{z}(-h)\right)^{\frac{\alpha}{\alpha - 1}}, \theta\right\rangle\right)
        \right\} \notag\\
        & = \sup_{z \in \R} \sup_{\substack{h \in \C(X) \\ h < 0 \ \theta\text{-a.e.}}}
        \Biggl\{ \frac{\alpha(1 - z)}{1 - \alpha} + \langle h, \mu \rangle
        - \frac{\alpha}{\alpha - 1} \ln\left( \frac{1 - \alpha}{\alpha} e^{z} \right) \notag\\
        & \hspace{6cm}
        - \ln\left(\langle (-h)^{\frac{\alpha}{\alpha - 1}}, \theta \rangle\right)
        \Biggr\} \notag\\
        & = \sup_{z \in \R} \sup_{\substack{h \in \C(X) \\ h < 0 \ \theta\text{-a.e.}}}
        \Biggl\{ \frac{\alpha(1 - z)}{1 - \alpha} + \langle h, \mu \rangle
        - \frac{\alpha}{\alpha - 1}\ln\left(\frac{1 - \alpha}{\alpha}\right)\notag\\ 
        & \hspace{6cm} 
        - \frac{\alpha}{\alpha - 1} z
        - \ln\left(\langle (-h)^{\frac{\alpha}{\alpha - 1}}, \theta \rangle\right)
        \Biggr\}\notag\\
        & = \sup_{\substack{h \in \C(X) \\ h < 0 \ \theta\text{-a.e.}}}
         \langle h, \mu \rangle
        - \ln\left(\langle (-h)^{\frac{\alpha}{\alpha - 1}}, \theta \rangle\right)
        - C_\alpha.\label{eq:ConvConjFinale}
    \end{align}
\end{proof}
We can now derive the dual formulation of the Rényi regularized OT distance \eqref{eq:OTeps,alpha}.
For $f, g \in \C(X)$,
we define $f \oplus g \in \C(X \times X)$ 
via 
\begin{equation*}
    f \oplus g \colon X \times X \to \R, \qquad
    (x, y) \mapsto f(x) + g(y).
\end{equation*}

\begin{theorem}[Dual formulation of $\OT_{\eps, \alpha}$, primal-dual relation] \label{thm:dual_formulation}
    Let $X$ be compact, $\eps > 0$, $\alpha \in (0, 1)$ and $\mu,\nu \in \P(X)$.
    \begin{enumerate}
        \item 
        The (pre)dual formulation of the Rényi regularized OT distance \eqref{eq:OTeps,alpha} is
        \begin{equation} \label{eq:dualdualSinkhornProblem}
            \OT_{\eps, \alpha}(\mu, \nu)
            = \sup_{\substack{f, g \in \C(X) \\ f \oplus g < c \ \mu \otimes \nu\text{-a.e.}}} \langle f \oplus g, \mu \otimes \nu \rangle - \eps \ln(\langle \gamma_{\alpha, f \oplus g}, \mu \otimes \nu \rangle) + C_{\alpha, \eps},
        \end{equation}
        where
        \begin{equation*}
            \gamma_{\alpha, f \oplus g}
            \coloneqq ( c - f \oplus g)^{\frac{\alpha}{\alpha - 1}}
        \end{equation*}
        and $C_{\alpha, \eps} \coloneqq - \eps \frac{\alpha}{1 - \alpha} \ln(\eps) - \eps C_{\alpha}$ where $C_{\alpha}$ is defined in \Cref{theorem:RenyiPreconjugate}.

        \item 
        The dual potentials from 1. exist, 
        i.e. the supremum in \eqref{eq:dualdualSinkhornProblem} is a maximum.

        \item 
        If $(\hat f,\hat g)$ is an optimal dual solution of \eqref{eq:dualdualSinkhornProblem},
        then the unique primal solution $\pi_{c}^{\alpha, \eps}$ of \eqref{eq:dualSinkhornProblem} can be represented as 
        \begin{equation} \label{eq:primal-dual-pi}
            \pi_c^{\alpha, \eps}
            = \left(\frac{( c - \hat{f} \oplus \hat{g} )^{\frac{1}{\alpha - 1}}}{\langle (c - \hat{f} \oplus \hat{g})^{\frac{1}{\alpha - 1}}, \mu\otimes\nu \rangle}\right) \mu \otimes \nu.
        \end{equation}
    \end{enumerate}
\end{theorem}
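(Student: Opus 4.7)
The plan is to substitute the preconjugate representation from \Cref{theorem:RenyiPreconjugate} for $\RR_\alpha(\pi \mid \mu \otimes \nu)$ into the primal problem \eqref{eq:dualSinkhornProblem}, exchange the resulting min and sup, apply Kantorovich duality to the inner unregularized OT, and finally optimize the auxiliary potential out pointwise. After substitution, the regularized problem reads
\[
    \min_{\pi \in \Pi(\mu, \nu)} \sup_{\substack{h \in \C(X \times X) \\ h < 0\ \mu \otimes \nu\text{-a.e.}}} \langle c + \eps h, \pi \rangle - \eps \ln\langle (-h)^{\alpha/(\alpha - 1)}, \mu \otimes \nu \rangle - \eps C_\alpha.
\]
The integrand is linear (hence convex and weakly continuous) in $\pi$ on the weakly compact convex set $\Pi(\mu, \nu)$. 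In $h$ it is linear plus $-\eps \ln\langle (-h)^{\alpha/(\alpha - 1)}, \mu \otimes \nu \rangle$, which is concave: since $\alpha/(\alpha-1) < 0$, the map $h \mapsto (-h)^{\alpha/(\alpha - 1)}$ is log-convex on the negative cone (the log-convexity lemma deferred to the appendix), so its integral against $\mu \otimes \nu$ is log-convex, and minus $\eps$ times its logarithm is concave. Sion's minimax theorem therefore justifies exchanging the min and the sup.

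With min and sup swapped, the inner minimum is the unregularized OT problem with continuous cost $c + \eps h$; Kantorovich duality on compact $X$ equates it to $\sup_{f \oplus g \leq c + \eps h}\langle f \oplus g, \mu \otimes \nu \rangle$. Combining with the outer sup gives a joint supremum over $(f, g, h)$, at which point I would eliminate $h$ pointwise: since $(-h)^{\alpha/(\alpha-1)}$ is strictly decreasing in $h$ on $(-\infty, 0)$, the log term is strictly increasing in $h$, so the optimal choice saturates the inequality, $h = (f \oplus g - c)/\eps$. Feasibility $h < 0$ $\mu\otimes\nu$-a.e.\ then translates into $f \oplus g < c$ $\mu \otimes \nu$-a.e., and factoring $\eps$ out of the resulting logarithm collects the constant $C_{\alpha, \eps} = -\eps\tfrac{\alpha}{1 - \alpha}\ln(\eps) - \eps C_\alpha$, giving exactly \eqref{eq:dualdualSinkhornProblem}.

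Part~2 will be handled by a compactness argument. The dual objective is upper semicontinuous on $\{(f,g) : f \oplus g < c\}$ in the uniform topology, by dominated convergence together with continuity of the map $f \oplus g \mapsto \gamma_{\alpha, f \oplus g}$. The shift invariance $(f, g) \mapsto (f + t, g - t)$ permits normalizing, e.g., $\int g \diff \nu = 0$, and boundedness of $c$ on the compact product $X \times X$ yields uniform bounds and equicontinuity of any maximizing sequence; Arzelà--Ascoli then supplies a uniform limit that realizes the supremum. For Part~3, I would combine strong duality from Part~1 with first-order optimality: differentiating the Lagrangian $\langle c, \pi \rangle + \eps \RR_\alpha(\pi \mid \mu \otimes \nu) - \langle f, (P^{(1)})_{\#}\pi - \mu\rangle - \langle g, (P^{(2)})_{\#}\pi - \nu\rangle$ in the density $\rho$ of $\pi$ with respect to $\mu \otimes \nu$ produces the stationarity condition $c + \eps\tfrac{\alpha}{\alpha - 1}\rho^{\alpha - 1}/\langle \rho^\alpha, \mu \otimes \nu\rangle = f \oplus g$. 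Solving for $\rho$ yields $\rho \propto (c - f \oplus g)^{1/(\alpha - 1)}$, and enforcing $\pi \in \P(X \times X)$ fixes the prefactor to exactly \eqref{eq:primal-dual-pi}. The main obstacle is verifying the minimax exchange: concavity in $h$ rests on the appendix log-convexity lemma, and without it the whole chain of manipulations collapses.
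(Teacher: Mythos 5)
Your Part~1 is a genuinely different route from the paper's. The paper applies the Fenchel--Rockafellar theorem with $F(f,g) = \langle f \oplus g, \mu \otimes \nu\rangle$ and $G(h) = \eps\ln\langle (c-h)^{\alpha/(\alpha-1)},\mu\otimes\nu\rangle - C_{\alpha,\eps}$ on $\{h < c\}$, which forces it to verify $G \in \Gamma_0(\C(X\times X))$ and to compute $G^*$ via \Cref{theorem:RenyiPreconjugate}; you instead plug the preconjugate into the primal, swap $\min$ and $\sup$ by Sion, invoke Kantorovich duality for the cost $c+\eps h$, and eliminate $h$ by monotonicity. Both hinge on the same two ingredients (the preconjugate and the log-convexity of mixtures), and your elimination of $h$ does work because decreasing $h$ pointwise decreases $(-h)^{\alpha/(\alpha-1)}$ pointwise and hence increases $-\eps\ln\langle(-h)^{\alpha/(\alpha-1)},\mu\otimes\nu\rangle$, so the optimal $h$ sits at the lower bound $h=(f\oplus g-c)/\eps$ imposed by the Kantorovich constraint. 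Note, though, that your stated monotonicity is backwards: $h\mapsto(-h)^{\alpha/(\alpha-1)}$ is strictly \emph{increasing} on $(-\infty,0)$ (the exponent is negative and $-h$ decreases as $h$ increases), so the objective is \emph{decreasing} in $h$ and one saturates the \emph{lower} bound; as written, "increasing, so saturate" points at the wrong end of the constraint even though the conclusion is right. Your Part~3 is essentially the paper's argument in informal dress: the paper derives $\hat f\oplus\hat g = c+\eps\phi$ with $\phi\in\partial[\overline{\RR}_\alpha(\cdot\mid\mu\otimes\nu)](\hat\pi)$ and inverts the subdifferential of the preconjugate, which is exactly your stationarity condition; to be rigorous you would need the conjugate-pair equivalence $\phi\in\partial[\overline{\RR}_\alpha(\cdot\mid\mu\otimes\nu)](\hat\pi)\iff\hat\pi\in\partial H(\phi)$ rather than formal differentiation in the density.

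The genuine gap is in Part~2. You assert that boundedness of $c$ yields "uniform bounds and equicontinuity of any maximizing sequence" so that Arzel\`a--Ascoli applies, but nothing in the objective delivers this. In classical Kantorovich duality (and in KL-regularized OT via the Sinkhorn operator) one replaces $(f,g)$ by $c$-transforms, which simultaneously improve the objective and inherit the modulus of continuity of $c$; here that step is unavailable. Replacing $f$ by its $c$-transform increases $\langle f\oplus g,\mu\otimes\nu\rangle$ but also increases $\langle(c-f\oplus g)^{\alpha/(\alpha-1)},\mu\otimes\nu\rangle$ (the exponent is negative), so it does \emph{not} obviously improve $\Psi$ --- the paper itself remarks that the first-order conditions cannot be solved for the potentials because of the normalizing denominator, which is precisely why no Sinkhorn-type smoothing map exists in this setting. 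Consequently a maximizing sequence $(f_n,g_n)$ has no a priori common modulus of continuity, and Arzel\`a--Ascoli cannot be invoked. The paper's Part~2 instead argues by contradiction: it uses the lower semicontinuity of $G$ together with finiteness of the primal value to rule out $f_n\oplus g_n$ approaching $c$ on a set of positive $\mu\otimes\nu$-measure, and the bound $\Psi(f,g)\le\eta_{\alpha,\|c\|_\infty}(\langle f\oplus g,\mu\otimes\nu\rangle)$ with the auxiliary function \eqref{eq:help_function} to rule out $f_n\oplus g_n$ being unbounded below on $\supp(\mu\otimes\nu)$. You would need a substitute of this kind; the equicontinuity claim as stated would fail.
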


\begin{proof}
    \begin{enumerate}
        \item 
        We will apply the following formulation of the Fenchel-Rockafellar theorem: 
        let $V, W$ be real Banach spaces, 
        $F \in \Gamma_0(V)$, $G \in \Gamma_0(W)$ 
        and $A \colon V \to W$ be linear and bounded. 
        If there is a $x \in \dom(F)$ such that $G$ is continuous at $A x$, 
        then 
        \begin{equation} \label{eq:FMR}
            \sup_{x \in V} - F(-x) - G(A x)
            = \inf_{w \in W^*} F^*(A^* w) + G^*(w).
        \end{equation}
        If a solution $\hat{x}$ of the sup-problem exists, 
        then for any solution $\hat{w}$ to the inf-problem 
        we have $\Lambda_{W}(A \hat{x}) \in \partial G^*(\hat{w})$ \cite[Thm.~II.4.1, Prop.~II.4.1]{ET99}.
        
        We want to identify the right hand side of \eqref{eq:FMR} with \eqref{eq:OTeps,alpha} 
        and thus choose $V = \C(X) \times\C(X)$, $W \coloneqq \C(X \times X)$, 
        which entails $V^* \cong \M(X) \times \M(X)$ and $W^* \cong \M(X \times X)$ 
        with the dual pairing $\langle f, \pi \rangle \coloneqq \int_{X \times X} f(x, y) \diff{\pi}(x, y)$ for $f \in W$ and $\pi \in W^*$.
        Furthermore, 
        we let
        \begin{equation} \label{eq:A}
            A \colon \C(X) \times \C(X) \to \C(X \times X), \qquad
            (f, g) \mapsto f \oplus g.
        \end{equation}
        This operator is linear and bounded, 
        since $\| f \oplus g \|_{\infty} \le \| f \|_{\infty} + \| g \|_{\infty}$.
        Then $A^* \colon \M(X \times X) \to \M(X) \times \M(X)$ is given by $A^*(\pi) = (P^{(1)}_{\#} \pi, P^{(2)}_{\#} \pi)$, 
        since for all $f, g \in \C(X)$ we have
        \begin{align*}
            \langle A(f,g), \pi\rangle
            & = \int_{X}\int_{X} f(P^{(1)}(x,y)) \diff \pi(x,y) + \int_{X}\int_{X} g(P^{(2)}(x,y)) \diff \pi(x,y) \\
            & = \int_{X} f(x) \diff[P_\#^{(1)}\pi](x) + \int_{X} g(y) \diff [P_\#^{(2)}\pi](y).
        \end{align*}
        Furthermore, we set
        \begin{equation*}
            F \colon \C(X) \times \C(X) \to \R, \qquad
            (f, g) \mapsto \langle f \oplus g, \mu \otimes \nu \rangle
        \end{equation*}
        which is clearly convex and continuous, 
        and set
        \begin{align*}
            G \colon \C(X \times X) 
            &\to \overline{\R}, \\
            h & \mapsto 
            \begin{cases}
                \eps \ln\left( \langle (c - h)^{\frac{\alpha}{\alpha - 1}}, \mu \otimes \nu \rangle\right) - C_{\alpha, \eps}, & \text{if } h < c \; \mu\otimes\nu\text{-a.e.}, \\
                \infty, & \text{else.}
            \end{cases}
        \end{align*}
        and 
        $E \coloneqq \{ h \in \C(X \times X): h < c \; \mu\otimes\nu-\text{a.e.}\}$.
        To show that $G$ is convex, 
        we only have to show that $\ln\left( \langle (c - \cdot)^{\frac{\alpha}{\alpha - 1}}, \mu \otimes \nu \rangle\right)$ is convex on $E$.
        This follows from the fact that mixtures of log-convex functions are log-convex again. 
        More precisely, 
        the result is obtained when choosing 
        $Y = X \times X$, 
        and 
        \begin{align}   \label{eq:Phi_gamma_alpha_c}
            \Phi(h, (x,y)) \coloneqq (c(x, y) - h(x, y))^{\frac{\alpha}{\alpha - 1}}
        \end{align}
        and $\theta = \mu \otimes \nu$ in \Cref{prop:LogConvexMixture},
        since $\Phi$ is (strictly) log-convex
        using that $z \mapsto z^{\frac{\alpha}{\alpha-1}}$ is (strictly) log-convex
        for $z > 0$.

        Now, 
        we show that $G$ is lower semicontinuous.
        As composition of continuous well-defined functions,
        $G$ is continuous on $E$, 
        so we only need to check lower semicontinuity
        on the boundary of $E$.
        The closure of $E$ is then
        \begin{align*}
            \overline{E} 
            = \{h \in \C(X\times X) : h - c \le 0 \ \mu\otimes\nu-\text{a.e.} \},
        \end{align*}
        and hence the the boundary of $E$ is given by 
        \begin{align*}
            \partial E 
            \coloneqq & \,\overline E \setminus E^\circ \\
            = & \{h \in \C(X \times X) : \exists (x_0, y_0) \in \supp(\mu \otimes \nu) : h(x_0, y_0) = c(x_0, y_0) \\
            & \hspace{40pt} \land (\mu\otimes\nu)(\{(x,y) \in X \times X : h(x,y) > c(x,y)\}) = 0\}.
        \end{align*}
        For verifying the lower semicontinuity, the crucial part of the boundary are those functions $h$ such that $h-c$ has zeros.
        Let $(h_n)_{n \in \N} \subset E$ be a sequence 
        converging to $h \in \partial E$, 
        realizing the limes inferior.
        The set $\{(x_0, y_0) \in \supp(\mu\otimes\nu) : h(x_0, y_0) = c(x_0, y_0)\}$
        has a subset, say $B_h$, of positive measure w.r.t. $\mu\otimes\nu$.
        Now,
        by the convergence $h_n \to h$,
        there exists
        for any $\varepsilon > 0$
        an $N \in \N$ such that
        \begin{align*}
            |c(x, y) - h_n(x,y)|
            \leq \underbrace{|c(x, y) - h(x,y)|}_{= 0} + |h(x, y) - h_n(x,y)| < \varepsilon
        \end{align*}
        for any $(x,y) \in B_h$ and $n \geq N$.
        Hence,
        by the monotonicity of $s \mapsto s^{\tfrac{\alpha}{\alpha-1}}$,
        we obtain
        \begin{align*}
            \langle (c - h_n)^{\tfrac{\alpha}{\alpha-1}}, \mu\otimes\nu \rangle
            \geq \int_{B_h} (c - h_n)^{\tfrac{\alpha}{\alpha-1}} \diff \mu\otimes\nu 
            &\geq \int_{B_h} \varepsilon^{\tfrac{\alpha}{\alpha-1}} \diff \mu\otimes\nu \\
            = (\mu\otimes\nu)(B_h) \cdot \varepsilon^{\tfrac{\alpha}{\alpha-1}}.
        \end{align*}
        By the continuity of the natural logarithm 
        and the monotonicity of $\liminf$,
        we obtain 
        \begin{align*}
            \liminf_{n \to \infty} G(h_n) = G(h) = \infty,
        \end{align*}
        which shows that $G$ is lower semicontinuous.
        
        Furthermore, $\dom(F) = \C(X) \times \C(X)$ and $G$ is continuous 
        on the interior of its domain, e.g. at $f \oplus g = - 1 < c$, 
        so that the constraint qualification is fulfilled.

        Before calculating $G^*$, we show that $G^*(\pi) = \infty$ for $\pi \in \M(X \times X) \setminus \M_+(X \times X)$.
        In this case, we can write $\pi = \pi_+ - \pi_-$ with $\pi_+, \pi_- \in \M_+(X \times X)$.
        By the definition of the conjugate, we have
        \begin{equation} \label{eq:G*}
            G^*(\pi)
            = \sup_{h \in \C(X \times X)} \langle h, \pi \rangle - \eps \ln\left( \langle (c - h)^{\frac{\alpha}{\alpha - 1}}, \mu \otimes \nu \rangle\right) + C_{\alpha, \eps} - \iota_{E}(h).
        \end{equation}
        Now, choose a sequence $(h_n)_{n \in \N} \subset E$ with $h_n|_{\supp(\pi_+)} \equiv 0$ and $h_n|_{\supp(\pi_-)} \le 0$, such that on a subset of $\supp(\pi_-)$ of positive measure, $h_n \to - \infty$ pointwise.
        Plugging in $h = h_n$ with $n \in \N$, up to constants, the objective in \eqref{eq:G*} becomes
        \begin{equation*}
            \underbrace{\langle h_n, \pi_- \rangle}_{\xrightarrow{n \to \infty} \infty} - \eps \ln\bigg( \langle \underbrace{(c - h_n)^{\frac{\alpha}{\alpha - 1}}}_{\searrow 0, n \to \infty}, \underbrace{\mu \otimes \nu}_{\ge 0} \rangle\bigg)
            \xrightarrow{n \to \infty} \infty - (- \infty)
            = \infty.
        \end{equation*}
        Hence, $\dom(G^*) \subset \M_+(X \times X)$.
        
        For $\pi \in \P(X \times X)$ we have, 
        using the substitution $g = \frac{1}{\eps} (h - c)$,
        \begin{align*}
            G^*(\pi)
            & = \sup_{h \in \C(X \times X)} \langle h, \pi \rangle - \eps \ln\left( \langle (c - h)^{\frac{\alpha}{\alpha - 1}}, \mu \otimes \nu \rangle\right) + C_{\alpha, \eps} - \iota_{E}(h) \\
            & = \sup_{\substack{g \in \C(X \times X) \\ g < 0 \ \mu\otimes\nu\text{-a.e.}}} \left\langle \eps g + c, \pi \right\rangle - \eps \ln\left( \left\langle \left(- \eps g\right)^{\frac{\alpha}{\alpha - 1}}, \mu \otimes \nu \right\rangle\right) + C_{\alpha, \eps} \\
            & = C_{\alpha, \eps} + \langle c, \pi \rangle
            + \eps \sup_{\substack{g \in \C(X \times X) \\ g < 0 \ \mu\otimes\nu\text{-a.e.}}} \langle g, \pi \rangle - \ln\left( \langle (- \eps g)^{\frac{\alpha}{\alpha - 1}}, \mu \otimes \nu \rangle\right) \\
            & = C_{\alpha, \eps} + \langle c, \pi \rangle
            + \eps \frac{\alpha}{1 - \alpha} \ln(\eps) \\
            & \qquad + \eps \sup_{\substack{g \in \C(X \times X) \\ g < 0 \ \mu\otimes\nu\text{-a.e.}}} \langle g, \pi \rangle - \ln\left( \langle (- g)^{\frac{\alpha}{\alpha - 1}}, \mu \otimes \nu \rangle\right) \\
            & = \langle c, \pi \rangle + \eps \overline{R}_{\alpha}(\pi \mid \mu \otimes \nu)
            + \underbrace{C_{\alpha, \eps} + \eps \frac{\alpha}{1 - \alpha} \ln(\eps) + \eps C_{\alpha}}_{= 0},
        \end{align*}
        using \Cref{theorem:RenyiPreconjugate} in the last line.
        Lastly, for $\xi, \theta \in \M(X)$ we have
        \begin{align*}
            F^*(\xi, \theta)
            & = \sup_{f, g \in \C(X)} \langle f, \xi - \mu \rangle + \langle g, \theta - \nu \rangle
            = \iota_{\{ (\mu, \nu) \}}(\xi, \theta).
        \end{align*}
        Hence, the Fenchel-Rockafellar theorem \eqref{eq:FMR} yields
        \begin{align*}
            & \sup_{\substack{f, g \in \C(X)^2 \\ f \oplus g < c \ \mu \otimes \nu-\text{a.e.}}} \langle f \oplus g, \mu \otimes \nu \rangle 
            - \eps \ln\left( \langle (c - f \oplus g)^{\frac{\alpha}{\alpha - 1}}, \mu \otimes \nu \rangle\right) + C_{\alpha, \eps} \\
            & = \sup_{x \in V} - F(-x) - G(A x)
            = \inf_{w \in W^*} F^*(A^* w) + G^*(w) \\
            & = \inf_{\pi \in \M_+(X \times X)} \iota_{(\mu, \nu)}\left((\pi_1)_\# \pi, (\pi_2)_{\#} \pi\right) + G^*(w) \\
            & = \inf_{\pi \in \Pi(\mu, \nu)} \langle c, \pi \rangle + \eps R_{\alpha}(\pi \mid \mu \otimes \nu).
        \end{align*}
        Here we used that if $\pi \in \M_+(X \times X)$ has marginals $\mu, \nu \in \P(X)$, then $\pi \in \P(X)$.

        \item 
        We first study, 
        for a constant $c > 0$, the auxiliary function 
        \begin{equation} \label{eq:help_function}
            \eta_{\alpha,c} : (-\infty, c) \to \R, 
            \quad 
            x \mapsto x\left(1 + \tfrac{\alpha}{1-\alpha}\tfrac{\ln(c - x)}{x}\right)
        \end{equation}
        which has a unique maximizer $x^*$
        and is strictly increasing on $(-\infty, x^*]$
        and strictly decreasing on $[x^*, c)$,
        moreover $\eta_{\alpha,c}$ is strictly concave.
        We now identify the function $\eta_{\alpha,c}$ 
        from \eqref{eq:help_function}
        as upper bound for the objective in the dual optimization problem \eqref{eq:dual_pre_formulation}.
        By definition of the supremum, 
        there is a sequence $((f_n, g_n))_{n \in \N}$
        with $f_n \oplus g_n < c$ $\mu\otimes\nu$-a.s. for all $n \in \N$,
        i.e. $(f_n, g_n) \in E$ for all $n \in \N$,
        such that
        \begin{align}
            \label{eq:lim_dual}
            &\lim_{n \to \infty} \langle f_n \oplus g_n, \mu \otimes \nu \rangle - \eps \ln\left( \langle \gamma_{\alpha, f_n \oplus g_n}, \mu \otimes \nu \rangle\right) \notag \\
            &\hspace{20pt}= \sup_{\substack{f, g \in \C(X), \\ f \oplus g < c \ \mu\otimes \nu \text{-a.e.}}} \langle f \oplus g, \mu \otimes \nu \rangle - \eps \ln\left( \langle \gamma_{\alpha, f \oplus g}, \mu \otimes \nu \rangle\right).
        \end{align}
        In analogy 
        to the proof of the lower semicontinuity of $G$,
        we towards contraction, 
        suppose that 
        \begin{align*}
            \{(x_0,y_0) \in \supp(\mu \otimes \nu) : \lim_{n\to \infty}(f_n \oplus g_n)(x_0,y_0) = c(x_0,y_0)\}
        \end{align*}
        and/or
        \begin{align*}
            \{(x_0,y_0) \in \supp(\mu \otimes \nu) 
            : \forall \eta > 0
            \;\exists n \in \N \;\text{s.t.}\; 
            |(f_n \oplus g_n)(x_0,y_0)| > \eta\}
        \end{align*} 
        are not $\mu\otimes\nu$ null sets
        (by definition of the support, there is a subset with positive mass),
        which means that the supremum is not attained by a maximizer in the set $E$.
        Then,
        for the first case 
        the right hand side in \eqref{eq:lim_dual} is infinite,
        by the lower semicontinuity of $G$; 
        however, by strong duality from \Cref{thm:dual_formulation}
        it must be equal to \eqref{eq:OTeps,alpha}, 
        which is always finite, 
        a contradiction.
        For the second case,
        we observe that the function 
        $\theta \colon (0, \infty) \to (0, \infty)$, $x \mapsto x^{\frac{\alpha}{\alpha - 1}}$ 
        is strictly convex for any $\alpha \in (0,1)$.
        The function $x \mapsto -\ln(x)$ is strictly decreasing. 
        Hence,
        Jensen's inequality yields for any $0 < \gamma \in \C(X \times X)$ that
        \begin{equation}
            \label{eq:rn_opt_plan}
            - \ln\left(\langle \theta \circ \gamma, \mu \otimes \nu \rangle\right)
            \le - \ln \left(\langle \gamma, \mu \otimes \nu \rangle^{\frac{\alpha}{\alpha - 1}}\right)
            = - \frac{\alpha}{\alpha - 1} \ln\left( \langle  \gamma, \mu \otimes \nu \rangle\right).
        \end{equation}
        Denote by $\Psi$ the objective of the dual problem in \eqref{eq:dualdualSinkhornProblem}.
        Hence,
        \begin{align*}
            \Psi(f,g) 
            &\leq \langle f \oplus g, \mu\otimes\nu\rangle + \tfrac{\alpha}{1-\alpha} \ln(\langle c -  f \oplus g, \mu\otimes\nu\rangle) \\
            &= \langle f \oplus g, \mu\otimes\nu\rangle\left(1 + \tfrac{\alpha}{1-\alpha}\frac{\ln(\langle c - f \oplus g, \mu\otimes\nu\rangle)}{\langle f \oplus g, \mu\otimes\nu\rangle}\right) \\
            &\leq \eta_{\alpha,\|c\|_\infty}(\langle f \oplus g, \mu\otimes\nu\rangle).
        \end{align*}
        using the monotonicity of the natural logarithm.
        Now,
        using \eqref{eq:help_function},
        if $f_n\oplus g_n$ are not bounded from below 
        on $\supp(\mu\otimes\nu)$,
        the right hand side is not bounded either,
        which contradicts the strong duality from \Cref{thm:dual_formulation}, again.

        \item 
        By \Cref{theorem:PiDivReg}, there exists a unique primal solution $\hat \pi \in \P(X\times X)$.
        Hence, 
        we know that the optimal dual solution $(\hat f,\hat g) \in \C(X)^2$ 
        fulfills $A(\hat f,\hat g) \in \partial G^*(\hat\pi)$, 
        that is,
        \begin{equation*}
            \hat{f} \oplus \hat{g} = c + \eps\phi,
        \end{equation*}
        where $\phi \in \partial\big[\overline \RR_\alpha(\cdot \mid \mu\otimes\nu)\big](\hat{\pi})$.
        We now want to find $\phi$.
        Define
        \begin{equation*}
            H \coloneqq [\overline{ \RR}_\alpha(\,\cdot\mid\mu\otimes\nu)]_*
        \end{equation*} 
        and
        \begin{equation*}
            \psi \colon \R \to \overline{\R}, \qquad x \mapsto \begin{cases}
                (-x)^{\frac{\alpha}{\alpha - 1}}, & \text{if } x<0, \\
                + \infty, & \text{else.}
            \end{cases},
        \end{equation*}
        which is differentiable on $\dom(\psi)$
        with $\psi'(x) = \frac{\alpha}{1 - \alpha}(-x)^{\frac{1}{\alpha - 1}}$ if $x<0$ 
        and $\partial \psi(x) = \emptyset$ otherwise.
        Notice that $H = \ln (\langle \psi \circ \cdot , \mu\otimes \nu\rangle)$.
        Since $\langle \psi\circ \phi, \mu\otimes\nu \rangle > 0$ 
        for $\phi \in \C(X \times X)$ with $\phi < 0$ $\mu\otimes\nu$-a.e.
        and $\ln \colon (0, \infty) \to \R$ is differentiable, 
        we have (see also \cite[Thm.~2.8.10]{Z2002})
        \begin{equation*}
            \partial H(\phi)
            = \begin{cases}
                \left\{ \frac{1}{\langle \psi\circ\phi, \mu\otimes\nu \rangle}(\psi' \circ \phi)\cdot (\mu\otimes\nu) \right\}, & \text{if } \phi < 0 \; \mu\otimes\nu\text{-a.e.}, \\
                \emptyset, & \textrm{else.}
            \end{cases}
        \end{equation*}
        One can show that
        $\partial H(\phi) = \emptyset$ holds 
        by utilizing a similar argumentation as above for the lower semicontinuity of $G$
        showing that $H(\varphi) = \infty$,
        for any $\varphi \in \C(X \times X)$
        with $\varphi \leq 0$ $\mu\otimes\nu$-a.s.
        and $\phi(x_0,y_0) = 0$ for some $(x_0,y_0) \in \supp(\mu\otimes\nu)$.
        Moreover, 
        $\overline \RR_\alpha(\,\cdot \mid \mu \otimes \nu)$ is strictly convex,
        using the Fenchel conjugate pair $H$ with $H^* = \overline{\RR}_\alpha$ 
        (see \Cref{theorem:RenyiPreconjugate})
        yields, due to lower semicontinuity of $G$,
        \begin{equation*}
            \phi \in \partial\big[\overline \RR_\alpha(\cdot \mid \mu\otimes\nu)\big](\hat{\pi})
            \iff \hat \pi \in \partial H(\phi),
        \end{equation*}
        since $\varphi \in \dom(H)$, i.e. $\varphi < 0$
        $\mu\otimes\nu$-a.e. \cite[Thm.~4.1]{ET99},
        similar to \cite{NS21}.
        \begin{align}
            \label{eq:dual_pre_formulation}
            \hat \pi
            = \frac{\alpha(1 - \alpha)^{- 1} \big(\frac{1}{\eps} (c - \hat f\oplus \hat g) \big)^{\frac{1}{\alpha - 1}}}{\langle \big(\frac{1}{\eps} (c - \hat f\oplus \hat g)\big)^{\frac{\alpha}{\alpha - 1}}, \mu\otimes\nu \rangle}\mu\otimes\nu,
        \end{align}
        by reinserting $\phi = \frac{1}{\eps}(\hat f\oplus \hat g - c)$.
        
        Lastly, the latter presentation of the solution $\pi_c^{\alpha, \eps}$ can be simplified as follows:
        since $\pi_c^{\alpha, \eps} \in \P(X\times X) $, 
        we have
        \begin{align*}
            1 = \langle 1, \pi_c^{\alpha, \eps} \rangle
            = \frac{\alpha(1 - \alpha)^{-1}}{\langle (\frac{1}{\eps}(c - \hat{f} \oplus \hat{g}))^{\frac{\alpha}{\alpha - 1}}, \mu\otimes\nu \rangle} \left\langle \left(\frac{1}{\eps}(c - \hat{f} \oplus \hat{g})\right)^{\frac{1}{\alpha - 1}}, \mu\otimes\nu \right\rangle \\
            \iff
            \frac{\langle (\frac{1}{\eps}(c - \hat{f} \oplus \hat{g}))^{\frac{\alpha}{\alpha - 1}}, \mu\otimes\nu \rangle}{\alpha(1 - \alpha)^{-1}} 
            = \left\langle \left(\frac{1}{\eps}(c - \hat{f} \oplus \hat{g})\right)^{\frac{1}{\alpha - 1}}, \mu\otimes\nu \right\rangle.
        \end{align*}
        and therewith 
        \begin{equation*}
            \pi_c^{\alpha, \eps} 
            = \frac{\alpha (1 - \alpha)^{-1}(\frac{1}{\eps}(c - \hat{f} \oplus \hat{g}))^{\frac{1}{\alpha - 1}}}{\langle (\frac{1}{\eps}(c - \hat{f} \oplus \hat{g}))^{\frac{1}{\alpha - 1}}, \mu\otimes\nu \rangle} \mu\otimes\nu
            = \frac{(c - \hat{f} \oplus \hat{g})^{\frac{1}{\alpha - 1}}}{\langle (c - \hat{f} \oplus \hat{g})^{\frac{1}{\alpha - 1}}, \mu\otimes\nu \rangle} \mu\otimes\nu.
            \qedhere
        \end{equation*}
    \end{enumerate}
\end{proof}

\begin{corollary}[Support of the optimal regularized transport plan] 
    Let $X$ be compact,
    $\eps > 0$, $\alpha \in (0,1)$
    and $\mu, \nu \in \P(X)$.
    Then,
    $\supp(\pi_c^{\alpha, \eps}) = \supp(\mu\otimes\nu)$.
\end{corollary}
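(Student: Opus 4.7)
The plan is to read off the support directly from the explicit density representation in \eqref{eq:primal-dual-pi} of \Cref{thm:dual_formulation}. That formula expresses the optimal regularized plan as $\pi_c^{\alpha,\eps} = \rho \cdot (\mu \otimes \nu)$, where
$$
\rho = \frac{(c - \hat f \oplus \hat g)^{\frac{1}{\alpha - 1}}}{Z},
\qquad
Z = \langle (c - \hat f \oplus \hat g)^{\frac{1}{\alpha - 1}}, \mu \otimes \nu \rangle.
$$
Showing that $\rho > 0$ holds $\mu \otimes \nu$-almost everywhere is enough, because two Borel measures that are mutually absolutely continuous share their null sets and, consequently, their supports.

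First I will use dual feasibility: the optimal potentials $(\hat f, \hat g)$ from part 2 of \Cref{thm:dual_formulation} satisfy $\hat f \oplus \hat g < c$ on a set of full $\mu \otimes \nu$-measure. Since $\alpha \in (0,1)$, the exponent $\frac{1}{\alpha - 1}$ is strictly negative, so $t \mapsto t^{\frac{1}{\alpha - 1}}$ maps $(0, \infty)$ into $(0, \infty)$. Hence the density $\rho$ is strictly positive $\mu \otimes \nu$-almost everywhere. Moreover $Z \in (0, \infty)$ because $\pi_c^{\alpha,\eps}$ is a probability measure by \Cref{theorem:PiDivReg}, so $\rho$ is a genuine probability density with respect to $\mu \otimes \nu$.

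Combining these observations, the representation $\pi_c^{\alpha,\eps} = \rho \cdot (\mu \otimes \nu)$ yields $\pi_c^{\alpha,\eps} \ll \mu \otimes \nu$ immediately, while the positivity of $\rho$ on a set of full $\mu \otimes \nu$-measure gives $\mu \otimes \nu \ll \pi_c^{\alpha,\eps}$ as well. The two measures thus have identical families of null sets. Since a point lies in the support of a Borel measure on a Polish space precisely when every one of its open neighborhoods fails to be a null set, the equality $\supp(\pi_c^{\alpha,\eps}) = \supp(\mu \otimes \nu)$ follows at once.

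I do not anticipate a real obstacle in carrying this out; all of the heavy lifting, including the delicate boundary analysis that was necessary to guarantee the existence of the dual potentials and to exclude degeneracy at points where $\hat f \oplus \hat g$ approaches $c$, has already been performed in \Cref{thm:dual_formulation}. The one point worth double-checking is that "$\mu \otimes \nu$-a.e." strict positivity of $\rho$ is the right level of precision; this is exactly the statement that the null sets of $\pi_c^{\alpha,\eps}$ and $\mu \otimes \nu$ coincide, which is precisely what the support of a measure is sensitive to.
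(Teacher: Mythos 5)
Your argument is correct and follows exactly the paper's route: the paper also deduces the statement directly from \Cref{thm:dual_formulation} via the strict positivity of the density of $\pi_c^{\alpha,\eps}$ with respect to $\mu\otimes\nu$. Your additional remarks on mutual absolute continuity and the coincidence of null sets merely spell out details the paper leaves implicit.
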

\begin{proof}
    This is a direct conclusion from \Cref{thm:dual_formulation},
    by the positivity of the density of $\pi_c^{\alpha, \eps}$
    with respect to $\mu\otimes\nu$.
\end{proof}

\begin{remark}[Primal dual relation for other regularizations]
    The primal-dual relationship \eqref{eq:primal-dual-pi} is another aspect 
    where Rényi and KL regularization differ significantly: 
    for the latter one, the first-order optimality conditions can be solved
    for the optimal potentials 
    and so define the so-called Sinkhorn operator~\cite[Eq.~(11)]{FSVATP19}~\cite[Eq.~(26)]{NS21} 
    or Schrödinger map~\cite[Eq.~(2)]{CCL22}.
    This is not possible for the Rényi regularized setting 
    due to the renormalization term in the denominator.
    
    In the case $\alpha = 1$, the density $\frac{\diff \pi_{c}^{\alpha, \eps}}{\diff{\mu} \otimes \nu}$ instead is $\exp\left( - \frac{1}{\eps} \big( c - f \oplus g \big) \right)$~\cite[Eq.~(25)]{NS21}.

    In the case of discrete measures, the primal-dual relationship \eqref{eq:primal-dual-pi} for the optimal plan $\MP \in \R^{d \times d}$ becomes
    \begin{equation} \label{eq:primal-dual_discrete}
        p_{i, j}
        = \begin{cases} \displaystyle\frac{\big(m_{ij} + \nu_{i} + \xi_{j}\big)^{\frac{1}{\alpha - 1}}}{\sum_{k, \ell = 1}^{d} \big( m_{k\ell} + \nu_{k} + \xi_{\ell} \big)^{\frac{1}{\alpha - 1}}r_k c_\ell}r_i c_j, & \text{if } r_i c_j > 0, \\
        0, & \text{else.}
        \end{cases}
    \end{equation}
    where $\Vr, \Vc$ are the discrete marginals, $m_{i, j}$ are entries of the cost matrix, and $\Vnu, \Vxi \in \R^d$ are the Lagrange multipliers 
    corresponding to the affine constraints in $U(\Vr, \Vc)$, the discrete version of transport polytope.
    It is not difficult to show that these Lagrange multipliers are related to the optimal dual solution $(\Vf, \Vg)$ via $- [\Vf, \Vg]^{\tT} = [\Vnu, \Vxi]^{\tT}$.
    
    The corresponding formula for $\alpha = 1$ is 
    $p_{i, j} = \exp\left(- \frac{1}{\eps} (\nu_i + \xi_j + m_{i, j})\right)$~\cite[Lem.~2]{C13},
    for the $q$-Tsallis entropy it is 
    $p_{i, j} = \exp_q( - 1) \frac{1}{\exp_q(\nu_i + \xi_j + \frac{1}{\eps} m_{i, j})}$ 
   ~\cite[Thm.~4]{MNPN17}, where $\exp_q$ is the $q$-exponential
    and for the Gini entropy it is 
    $p_{i,j} = -\frac{1}{2}(\frac{1}{\eps}(m_{i,j} + \nu_i + \xi_j) - 1)$.
\end{remark}

\begin{remark}[Dual formulation on non-compact $X$] \label{rem:noncompact}
    Our approach for the predual formulation does not work 
    on non-compact spaces like $X = \R^d$,
    since then, the predual space of $\M(X)$ is $\C_0(X)$.

    Taking the same approach as above, the preconjugate would be finite only for negative functions $s$ in $\C_0(\R^d\times \R^d)$.
    In turn, any preconjugate of the function $F$ is also defined on $\C_0(\R^d\times \R^d)$, 
    which then is by \Cref{theorem:ConvConjRenyiDiv} a preconjugate
    of the Rényi divergence with the scaled and biased argument (by $\eps$ and $c$).
    Notice that $s - c \not\in \C_0(\R^d\times \R^d)$ in general.
    Hence, the dual problem would be equal to minus infinity at any point.
    A dual formulation on non-compact domain might hence be possible to obtain if either
    \begin{itemize}
        \item
        the cost function $c$ is also $\C_0(\R^d\times \R^d)$, 
        which is not a sensible assumption or
        
        \item
        the measures are compactly supported, 
        such that, very informally, the constraints of the convex conjugate of the Rényi divergence become $\C(\R^d\times \R^d)$ instead of $\C_0(\R^d\times \R^d)$, as in case where $X$ is compact.
    \end{itemize}
\end{remark}

The next proposition
is also well-known for KL regularized OT~\cite[Prop.~5.7]{NS21}\cite[Prop.~11]{FLA2021}
in the continuous setting,
as well as for the Tsallis regularized OT~\cite[Thm.~4]{MNPN17}
in the discrete setting.
The proof directly uses \Cref{thm:dual_formulation}.1,
i.e. \eqref{eq:Phi_gamma_alpha_c} and \Cref{prop:LogConvexMixture}.

\begin{proposition}[Uniqueness of dual potentials up to a constant]
\label{prop:UniConstants}
    Let $X$ be compact, $\eps > 0$
    and $\mu,\nu \in \P(X)$.
    The optimal dual potentials $\hat f,\hat g \in \C(X)$
    of \eqref{eq:dualSinkhornProblem} are unique on
    $\supp(\mu)$ and $\supp(\nu)$, respectively, up to additive constants.
    That is, for two optimal pairs $(\hat f,\hat g),(\tilde f,\tilde g) \in \C(X)^2$
    there exists a $\gamma \in \R$ such that $\hat f - \tilde f \equiv \gamma$ $\mu$-almost everywhere and $\hat g - \tilde g \equiv -\gamma$ $\nu$-almost everywhere.
\end{proposition}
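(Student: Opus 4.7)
The plan is to reduce uniqueness to strict concavity of the dual objective from \eqref{eq:dualdualSinkhornProblem} as a function of the sum $h = f \oplus g$, and then to extract the additive-constant ambiguity by Fubini's theorem. Note that the dual objective is invariant under $(f, g) \mapsto (f + \gamma, g - \gamma)$ for any $\gamma \in \R$, since both $\langle f \oplus g, \mu \otimes \nu \rangle$ and $c - f \oplus g$ are preserved; hence uniqueness up to this one-parameter shift is exactly what one can hope for.

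First I would observe that
$\Psi(f, g) \coloneqq \langle f \oplus g, \mu \otimes \nu \rangle - \eps \ln\langle (c - f \oplus g)^{\alpha/(\alpha - 1)}, \mu \otimes \nu \rangle + C_{\alpha, \eps}$
depends on $(f, g)$ only through $h = f \oplus g$. Since the first term is linear in $h$, strict concavity of $\Psi$ in $h$ reduces to strict convexity of $h \mapsto \ln\langle (c - h)^{\alpha/(\alpha - 1)}, \mu \otimes \nu \rangle$ on the feasible set, viewed modulo $\mu \otimes \nu$-a.e. equality.

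Next I would apply \Cref{prop:LogConvexMixture} with $\theta = \mu \otimes \nu$ and $\Phi$ from \eqref{eq:Phi_gamma_alpha_c}, namely $\Phi(h, (x, y)) = (c(x, y) - h(x, y))^{\alpha/(\alpha - 1)}$. Since $\alpha/(\alpha - 1) < 0$ for $\alpha \in (0, 1)$, the map $z \mapsto z^{\alpha/(\alpha - 1)}$ is strictly log-convex on $(0, \infty)$, so $\Phi(\cdot, (x, y))$ is strictly log-convex for every $(x, y)$. The strict form of \Cref{prop:LogConvexMixture}---the same one already invoked in the proof of \Cref{thm:dual_formulation}.1 to establish strict convexity of $G$---then upgrades this to strict log-convexity of $h \mapsto \langle \Phi(h, \cdot), \mu \otimes \nu \rangle$ whenever two candidate $h$'s disagree on a set of positive $\mu \otimes \nu$-measure. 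Consequently $\Psi$ is strictly concave in $h$ modulo $\mu \otimes \nu$-a.e. equality.

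Finally, let $(\hat f, \hat g)$ and $(\tilde f, \tilde g)$ both maximize $\Psi$; their existence is \Cref{thm:dual_formulation}.2. Testing their midpoint against the common optimum and using strict concavity forces $\hat f \oplus \hat g = \tilde f \oplus \tilde g$ $\mu \otimes \nu$-almost everywhere. Setting $u \coloneqq \hat f - \tilde f$ and $v \coloneqq \hat g - \tilde g$, this reads $u(x) + v(y) = 0$ for $\mu \otimes \nu$-a.e. $(x, y)$; Fubini's theorem then produces a $\mu$-full set $A \subseteq X$ such that for every $x \in A$ one has $v(y) = -u(x)$ for $\nu$-a.e. $y$. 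Fixing any $x_0 \in A$ forces $v \equiv -u(x_0)$ $\nu$-a.e., and feeding this back forces $u \equiv u(x_0)$ $\mu$-a.e.; the constant $\gamma \coloneqq u(x_0)$ is the desired one.

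The main obstacle I expect is the strict form of the log-convex mixture statement: pointwise strict log-convexity of $\Phi(\cdot, (x, y))$ on $\supp(\mu \otimes \nu)$ must transfer cleanly to strict log-convexity of the integral whenever two candidate $h$'s disagree on a positive-measure set. The remaining pieces---existence of optimizers, the $\oplus$-translation invariance, and the Fubini extraction of the additive constant---are then routine.
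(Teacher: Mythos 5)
Your proposal is correct and follows essentially the same route as the paper: strict concavity of the dual objective in $h = f \oplus g$ (via the strict log-convexity of $\Phi(h,(x,y)) = (c - h)^{\alpha/(\alpha-1)}$ and \Cref{prop:LogConvexMixture}), a midpoint argument forcing $\hat f \oplus \hat g = \tilde f \oplus \tilde g$ $\mu\otimes\nu$-a.e., and then separation of variables to extract the additive constant. Your Fubini step merely makes explicit what the paper leaves implicit in its final ``i.e.\ there exists $\gamma$'' conclusion, and your caveat about needing the two candidates to disagree on a positive-measure set is exactly the point the paper relies on.
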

\begin{proof}
    We define the objective in \Cref{thm:dual_formulation} of the dual optimization problem 
    \begin{align*}
        \Psi(f,g)
        \coloneqq 
        \langle f \oplus g, \mu \otimes \nu \rangle
        - \eps
        \ln(\langle \gamma_{\alpha,f\oplus g}, \mu\otimes\nu\rangle) + C_{\alpha, \eps}, \qquad f, g \in \C(X).
    \end{align*}
    The function $\Psi$ is strictly concave 
    since from the proof of \Cref{thm:dual_formulation} we know that $\Phi$ 
    is strictly log-convex, cf.~\Cref{prop:LogConvexMixture}.
    Moreover, we define 
    \begin{align*}
        \varphi(t) 
        \coloneqq 
        \Psi(f_t, g_t)
        \qquad\text{where}
        \quad
        f_t \coloneqq \hat f + t(\tilde f - \hat f),
        \qquad
        g_t \coloneqq \hat g + t(\tilde g - \hat g)
    \end{align*}
    and $(\hat f, \hat g),(\tilde f, \tilde g) \in \C(X)^2$
    are two pairs of optimal potentials of the considered problems
    by \Cref{thm:dual_formulation}.
    Notice that $\varphi(0) = \varphi(1) = \OT_{\eps, \alpha}(\mu,\nu)$
    by assumption, i.e. for all $t \in (0,1)$ holds
    \begin{align*}
        \varphi(t) \le \OT_{\eps, \alpha}(\mu,\nu)
    \end{align*}
    by optimality.
    For $t = \frac{1}{2}$ the convex combination becomes 
    \begin{equation*}
        f_{\frac{1}{2}}\oplus g_{\frac{1}{2}}
        = 
        \frac{1}{2}(\tilde f + \hat f) \oplus (\tilde g + \hat g).
    \end{equation*}
    We obtain 
    \begin{align*}
        \OT_{\eps, \alpha}(\mu,\nu) - C_{\alpha, \eps}
        & = \varphi(0) - C_{\alpha, \eps}
        \ge \varphi\left(\frac{1}{2}\right) - C_{\alpha, \eps}\\
        & = \frac{1}{2}
        \langle \tilde f \oplus \tilde g,\mu\otimes\nu\rangle
        + \frac{1}{2}
        \langle \hat f \oplus \hat g,\mu\otimes\nu\rangle
        - \eps\ln(\langle \gamma_{f_{\frac{1}{2}}\oplus g_{\frac{1}{2}}}^{\frac{\alpha}{\alpha-1}},\mu\otimes\nu\rangle) \\
        & > \frac{1}{2}\varphi(0) + \frac{1}{2}\varphi(1) - C_{\alpha, \eps}
        = \OT_{\eps, \alpha}(\mu,\nu) - C_{\alpha, \eps}
    \end{align*}
    using the strict concavity of the function $\varphi$,
    a contradiction.
    Hence, without loss of generality it holds that
    \begin{align*}
        f_{\frac{1}{2}}\oplus g_{\frac{1}{2}} & = \tilde f \oplus \tilde g
        &&\mu\otimes\nu\text{-a.s.} \\
        \Longleftrightarrow
        -\frac{1}{2}(\tilde f(x) - \hat f(x))
        & = \frac{1}{2}(\tilde g(y) - \hat g(y))
        &&\mu\otimes\nu\text{-a.s.},
    \end{align*}
    i.e. there exists $\gamma \in \R$ such that 
    $\tilde f - \hat f \equiv \gamma$ $\mu$-almost everywhere
    and $\tilde g - \hat g \equiv -\gamma$ $\nu$-almost everywhere.
\end{proof}

\subsection{Interpolation Properties of Rényi Regularized Optimal Transport}
\label{sec:InterpolatingRenyiOT}

The following theorem
generalizes the well-known convergence properties of KL regularized OT~\cite{FSVATP19,PC19,NS21}.
In short, $\OT_{\eps, \alpha}$ interpolates between unregularized OT and $\langle c, \mu \otimes \nu \rangle$ for $\eps \to \{ 0,\infty\}$ and is continuous $\alpha \in (0,1]$, meaning in particular that $\OT_{\eps, \alpha}$ interpolates between unregularized OT and KL regularized OT.

\begin{theorem}[Convergence in $\eps$ and {$\alpha \in (0, 1]$}]
    \label{theorem:ConvToOTKL}
    Let $X$ be compact, $\mu,\nu \in \P(X), \eps > 0$ and $\alpha \in (0, 1]$.
    Then, for fixed $\alpha \in (0,1)$, we have
    \begin{alignat}{2}
        & \OT_{\eps, \alpha}(\mu,\nu) \to \OT(\mu,\nu), 
        &&\quad \eps \to 0, \label{eq:convLamInfinity}\\
        & \OT_{\eps, \alpha}(\mu,\nu) \to \langle c,\mu\otimes\nu \rangle,
        &&\quad \eps \to \infty \label{eq:convLamZero}
    \end{alignat}
    and
    \begin{alignat*}{2}
        \pi_c^{\alpha, \eps}(\mu, \nu)
        & \rightharpoonup \argmin\big\{ \RR_\alpha(\pi \mid \mu\otimes\nu): 
        \pi \in \Pi(\mu,\nu), \ \langle c, \pi\rangle = \OT(\mu, \nu) \big\}, 
        &&\qquad \eps \to 0, \\
        \pi_c^{\alpha, \eps}(\mu, \nu)
        & \rightharpoonup \mu\otimes\nu,
        &&\qquad \eps \to \infty,
    \end{alignat*}
    where $\rightharpoonup$ denotes weak convergence.
    Moreover, for $\alpha \in (0, 1)$ and for fixed $\eps > 0$,
    we have
    \begin{alignat}{2}
        &\OT_{\eps, \alpha}(\mu,\nu) \to \OT(\mu,\nu), 
        && \qquad \alpha \searrow 0, \label{eq:convAlphaZero}\\
        & \OT_{\eps, \alpha}(\mu, \nu)
        \to \OT_{\eps, \alpha'}(\mu, \nu),
        && \qquad \alpha \nearrow \alpha' \in (0, 1], \label{eq:convAlphaOne}
    \end{alignat}
    and
    \begin{alignat}{2} 
        \label{eq:planconvergencealphato0}
        \pi_c^{\alpha, \eps}(\mu, \nu)
        & \rightharpoonup
        \pi \in \Pi(\mu, \nu)
        \quad
        \text{with}
        \quad
        \langle c, \pi\rangle = \OT(\mu,\nu) 
        && \qquad \alpha \searrow 0, \\
        \label{eq:planconvergencealphatoalpha'}
        \pi_c^{\alpha, \eps}(\mu,\nu)
        & \rightharpoonup \pi_c^{\alpha', \eps}(\mu,\nu),
        && \qquad \alpha \nearrow \alpha' \in (0,1].
    \end{alignat}
\end{theorem}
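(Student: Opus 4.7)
The plan is a standard $\Gamma$-convergence-style argument: for each of the four limits I establish matching upper and lower bounds on the regularized objective using the monotonicity from \Cref{lem:RenyiMonotone}, the weak lower semicontinuity, strict convexity, and continuity in $\alpha$ of $\RR_\alpha$ from \Cref{prop:RenyiDivProps}, and carefully chosen trial plans. The plan convergence then follows by the standard compactness-plus-uniqueness argument: the optimal plans lie in the weakly compact set $\Pi(\mu,\nu)$, subsequential weak limits are identified as minimizers of the limiting problem by weak lsc and the already-established objective convergence, and uniqueness of the limiting minimizer (from \Cref{theorem:PiDivReg} for the $\eps\to\infty$ and $\alpha\nearrow\alpha'$ cases, resp.\ from strict convexity of $\RR_\alpha$ on the set of $\OT$-optimal plans for $\eps\searrow 0$) upgrades subsequential to full convergence. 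For \eqref{eq:planconvergencealphato0} only subsequential convergence to \emph{some} $\OT$-optimal plan is claimed, so no uniqueness is needed.

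First I would dispose of the two $\eps$-limits. Property \eqref{eq:convLamInfinity} follows from $\OT_{\eps,\alpha}\ge\OT$ together with the upper bound $\OT_{\eps,\alpha}(\mu,\nu)\le\langle c,\pi^t\rangle+\eps\RR_\alpha(\pi^t\mid\mu\otimes\nu)$ obtained by plugging in the mollified plan $\pi^t\coloneqq(1-t)\pi_c+t(\mu\otimes\nu)$ with $t\in(0,1)$, which has finite Rényi divergence; sending $\eps\to 0$ and then $t\to 0$ concludes, since $\langle c,\pi^t\rangle\to\OT(\mu,\nu)$. For \eqref{eq:convLamZero} the upper bound $\OT_{\eps,\alpha}(\mu,\nu)\le\langle c,\mu\otimes\nu\rangle$ is immediate from plugging in the trivial coupling, and the bound $\RR_\alpha(\pi_c^{\alpha,\eps}\mid\mu\otimes\nu)\le\tfrac{1}{\eps}\langle c,\mu\otimes\nu\rangle\to 0$ together with the divergence property forces $\pi_c^{\alpha,\eps}\rightharpoonup\mu\otimes\nu$. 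For \eqref{eq:convAlphaOne}, monotonicity gives $\OT_{\eps,\alpha}\le\OT_{\eps,\alpha'}$; the matching lim inf bound is slightly delicate: for a weakly convergent subsequence $\pi_n=\pi_c^{\alpha_n,\eps}\rightharpoonup\pi^*$ and any fixed $\beta<\alpha'$, monotonicity in the order yields $\RR_{\alpha_n}(\pi_n\mid\mu\otimes\nu)\ge\RR_\beta(\pi_n\mid\mu\otimes\nu)$ for $n$ large, weak lsc lets $n\to\infty$, and continuity of $\alpha\mapsto\RR_\alpha$ lets $\beta\nearrow\alpha'$.

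The main obstacle will be \eqref{eq:convAlphaZero}. The naive bound $\OT_{\eps,\alpha}\le\OT+\eps\RR_\alpha(\pi_c\mid\mu\otimes\nu)$ with an unregularized optimizer $\pi_c$ fails, since typically $\pi_c$ is concentrated on a graph, hence on a $(\mu\otimes\nu)$-null set, forcing $\RR_\alpha(\pi_c\mid\mu\otimes\nu)=+\infty$. The workaround is the same mollified plan $\pi^\alpha\coloneqq(1-t_\alpha)\pi_c+t_\alpha(\mu\otimes\nu)$, but now with $t_\alpha$ depending on $\alpha$. Working with densities against a common reference $\tau$, the pointwise inequality $(a+b)^\alpha\ge b^\alpha$ gives $\rho_{\pi^\alpha}^\alpha\ge t_\alpha^\alpha\,\rho_{\mu\otimes\nu}^\alpha$, hence
\begin{equation*}
    \int\rho_{\pi^\alpha}^\alpha\,\rho_{\mu\otimes\nu}^{1-\alpha}\diff\tau\ge t_\alpha^\alpha
    \qquad\text{and therefore}\qquad
    \RR_\alpha(\pi^\alpha\mid\mu\otimes\nu)\le\frac{\alpha\,|\ln t_\alpha|}{1-\alpha}.
\end{equation*}
Choosing $t_\alpha=\alpha$ makes this upper bound vanish as $\alpha\searrow 0$, while linearity of $\langle c,\,\cdot\,\rangle$ gives $\langle c,\pi^\alpha\rangle=(1-\alpha)\OT(\mu,\nu)+\alpha\langle c,\mu\otimes\nu\rangle\to\OT(\mu,\nu)$; combined with the trivial lower bound $\OT_{\eps,\alpha}\ge\OT$, this establishes \eqref{eq:convAlphaZero}, and the weak convergence \eqref{eq:planconvergencealphato0} follows by the generic compactness argument outlined in the first paragraph.
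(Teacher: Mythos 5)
Your proposal is correct, and for three of the four limits ($\eps\to 0$, $\eps\to\infty$, $\alpha\nearrow\alpha'$) it is essentially the paper's argument: same monotonicity/lower-semicontinuity bounds, same compactness-plus-strict-convexity identification of the weak limits, and your inline two-step argument for the joint lower bound $\liminf_n \RR_{\alpha_n}(\pi_n\mid\mu\otimes\nu)\ge\RR_{\alpha'}(\pi^*\mid\mu\otimes\nu)$ (fix $\beta<\alpha'$, use monotonicity in the order, then weak lsc, then continuity in the order) is exactly the paper's \Cref{lem:JointlyContRenyiDiv}, just reproved in place. The genuine divergence is the case $\alpha\searrow 0$, where the paper works through the primal--dual representation \eqref{eq:dual_pre_formulation} of $\pi_c^{\alpha_\ell,\eps}$ to show $\RR_{\alpha_\ell}(\pi_c^{\alpha_\ell,\eps}\mid\mu\otimes\nu)\to 0$ (requiring compactness of $X$, the Kantorovich dual, and some delicate constant-tracking), followed by a second, separate mollification step to control $\langle c,\pi_c^{\alpha_\ell,\eps}\rangle$; you replace all of this with a single primal competitor $\pi^\alpha=(1-\alpha)\pi_c+\alpha(\mu\otimes\nu)$ and the elementary density bound $\rho_{\pi^\alpha}^\alpha\ge\alpha^\alpha\rho_{\mu\otimes\nu}^\alpha$, giving $\RR_\alpha(\pi^\alpha\mid\mu\otimes\nu)\le\alpha|\ln\alpha|/(1-\alpha)\to 0$; I checked the sign and the integral and this is correct. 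Your route is shorter, purely primal, needs no duality, and yields an explicit $O(\alpha|\ln\alpha|)$ rate on the value gap; it also recovers the paper's auxiliary fact $\RR_{\alpha}(\pi_c^{\alpha,\eps}\mid\mu\otimes\nu)\to 0$ for free, since $\eps\RR_{\alpha}(\pi_c^{\alpha,\eps}\mid\mu\otimes\nu)\le\OT_{\eps,\alpha}(\mu,\nu)-\OT(\mu,\nu)$. A side benefit of your mollified competitor in the $\eps\to 0$ case is that it proves the value convergence \eqref{eq:convLamInfinity} without the paper's implicit assumption that some unregularized optimal plan has finite Rényi divergence (that assumption is still needed, by both of you, for the stated form of the plan limit as an $\argmin$ of $\RR_\alpha$ over optimal plans). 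Two small points to flesh out when writing this up: for the $\eps\to0$ plan limit, the identification step needs the explicit comparison $\RR_\alpha(\pi_c^{\alpha,\eps}\mid\mu\otimes\nu)\le\RR_\alpha(\pi_c\mid\mu\otimes\nu)$ for every optimal $\pi_c$ (obtained by comparing objective values, not just from weak lsc); and for $\alpha'=1$ you should note that uniqueness of the KL regularized minimizer is standard, since \Cref{theorem:PiDivReg} is only stated for $\alpha\in(0,1)$.
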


\begin{figure}[t!]
    \centering
    \includegraphics[width=.5\textwidth]{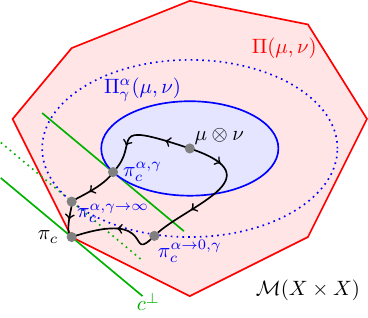}
    \caption{Transport polytope $\Pi(\mu,\nu)$ with the restricted transport polytope $\Pi_\gamma^\alpha(\mu,\nu)$.
    Convergence of $\pi_c^{\alpha,\gamma} \to \pi_c$ to the unregularized OT plan
    with $\alpha \to 0$ and $\gamma \to \infty$. 
    The set $\Pi_{\alpha}^{\gamma}$ is defined in \Cref{definition:RenyiBall}.
    (Plot inspired by~\cite{C13}.)}
\end{figure}

\begin{proof}
    We show each convergence result separately.
    \begin{enumerate}
        \item 
        For \eqref{eq:convLamInfinity}
        we follow the proof of~\cite[Prop.~4.1]{PC19}.
        Let $\eps_\ell \to 0$ for $\ell \to \infty$.
        Since $\Pi(\mu,\nu)$ is weakly compact 
        and by the uniqueness of the solution of \eqref{eq:dualSinkhornProblem}, 
        we have $\pi_c^{\alpha, \eps_\ell} \rightharpoonup \pi^*_c \in \Pi(\mu,\nu)$.
        Let $\pi_c \in \Pi(\mu,\nu)$ any OT plan realizing $\OT(\mu,\nu)$.
        We obtain 
        \begin{align}
            0
            & \le \langle c, \pi_c^{\alpha, \eps_\ell}\rangle - \langle c, \pi_c \rangle \notag \\
            & \le \eps_{\ell}\bigg(\RR_\alpha(\pi_c \mid \mu\otimes\nu) - \RR_\alpha(\pi_c^{\alpha, \eps_\ell} \mid \mu\otimes\nu)\bigg) \label{eq:ConLamInf} \\
            & \le \eps_{\ell}\RR_\alpha(\pi_c \mid \mu\otimes\nu) 
            \to 0, \qquad \ell \to \infty \notag
        \end{align}
        since $\RR_\alpha(\pi_c \mid \mu\otimes\nu)$ is independent of $\eps_\ell$.
        Together with the weak convergence $\pi_{c}^{\alpha, \eps_{\ell}} \rightharpoonup \pi_c^*$, 
        the weak lower semicontinuity of $c$ 
        we obtain
        \begin{equation*}
            \langle c, \pi_c \rangle
            = \lim_{\ell \to \infty} \langle c, \pi_c^{\alpha, \eps_\ell}\rangle 
            = \liminf_{\ell \to \infty} \langle c, \pi_c^{\alpha, \eps_\ell}\rangle 
           \ge \langle c, \pi_c^* \rangle
           \ge \langle c, \pi_c \rangle.
        \end{equation*}
        The optimality of $\pi_c$ for the unregularized problem yields $\langle c, \pi_c \rangle = \langle c, \pi_c^* \rangle$.
        
        Multiplying \eqref{eq:ConLamInf} by $\eps_\ell$ yields 
        \begin{align}
        \label{eq:boundRenyiConLam}
            \RR_\alpha(\pi_c^{\alpha, \eps_\ell}\mid \mu\otimes\nu) \le \RR_\alpha(\pi_c \mid \mu\otimes\nu)
            \quad\text{for all}
            \quad
            \ell \in \N.
        \end{align}
        In summary we have 
        \begin{align*}
            \langle c, \pi_c^{\alpha, \eps_\ell} \rangle
            + \eps_{\ell}
            \RR_\alpha(\pi_c^{\alpha, \eps_\ell} \mid \mu\otimes\nu)
            \to \OT(\mu,\nu), \qquad \ell \to \infty.
        \end{align*}
        Moreover, since $\RR_\alpha$ is weakly lower semicontinuous, we obtain 
        \begin{align*}
            \liminf_{\ell\to\infty} \RR_\alpha(\pi_c^{\alpha, \eps_\ell} \mid \mu\otimes\nu)
           \ge \RR_\alpha(\pi_c^* \mid \mu\otimes\nu).
        \end{align*}
        Together with taking the $\liminf_{\ell \to \infty}$ on both sides of \eqref{eq:boundRenyiConLam}, this yields 
        \begin{equation*}
            \RR_{\alpha}(\pi_c^* \mid \mu \otimes \nu)
            \le \liminf_{\ell \to \infty} \RR_{\alpha}(\pi_c^{\alpha, \eps_{\ell}} \mid \mu \otimes \nu)
            \le \RR_{\alpha}(\pi_c\mid \mu \otimes \nu).
        \end{equation*}
        Since this argument holds for any minimizer $\pi_c$ of $\OT(\mu, \nu)$, 
        we obtain that $\pi_c^*$ is a minimizer of 
        $\RR_{\alpha}(\,\cdot \mid \mu \otimes \nu)$ over the set of all such $\pi_c$.
        Finally,
        by the strict convexity of the Rényi divergence 
        on the transport polytope we conclude that
        \begin{equation*}
            \pi_c^{\alpha, \eps_\ell}
            \rightharpoonup \argmin\{ \RR_\alpha(\pi \mid \mu\otimes\nu): \pi \in \Pi(\mu,\nu), \langle c, \pi\rangle = \OT(\mu, \nu) \}, \qquad \ell \to \infty.
        \end{equation*}
        
        \item 
        The proof of \eqref{eq:convLamZero} 
        is similar to the one in~\cite[Prop.~5.3 (i)]{NS21}.
        For $\tilde\pi = \mu\otimes\nu \in \Pi(\mu,\nu)$,
        we have 
        \begin{align}
        \label{eq:RenyiConvLambdaToZero}
            \langle c,\tilde\pi\rangle 
            + \eps\RR_\alpha(\tilde\pi \mid \mu\otimes\nu)
            = \langle c,\mu\otimes\nu\rangle
        \end{align}
        for all $\eps > 0$ or equivalently 
        \begin{align}
        \label{eq:RenyiConvLambdaToZeroSec}
            \frac{1}{\eps} \langle c,\tilde\pi\rangle 
            + \RR_\alpha(\tilde\pi \mid \mu\otimes\nu)
            = \frac{1}{\eps}\langle c,\mu\otimes\nu\rangle.
        \end{align}
        From \eqref{eq:RenyiConvLambdaToZeroSec}, 
        we have 
        \begin{align*}
            \frac{1}{\eps} \OT_{\eps, \alpha}(\mu,\nu)
            & = 
            \min_{\pi\in\Pi(\mu,\nu)}
            \frac{1}{\eps} \langle c,\pi\rangle 
            + \RR_\alpha(\pi \mid \mu\otimes\nu)\\
            & = \frac{1}{\eps} \langle c,\pi_c^{\alpha, \eps}\rangle 
            + \RR_\alpha(\pi_c^{\alpha, \eps}\mid \mu\otimes\nu)
             \le 
            \frac{1}{\eps} \langle c,\mu\otimes\nu\rangle \\
            \Longrightarrow
            0 
            \leq 
            \RR_\alpha(\pi_c^{\alpha, \eps}\mid \mu\otimes\nu)
            & \le \frac{1}{\eps} \langle c, \mu\otimes\nu - \pi_c^{\alpha, \eps} \rangle
        \end{align*}
        for all $\eps > 0$.
        From this, we earn 
        \begin{align*}
            \limsup_{\eps \to \infty}
            \RR_\alpha(\pi_c^{\alpha, \eps}\mid \mu\otimes\nu)
             \le \lim_{\eps\to \infty} \frac{1}{\eps} \langle c, \mu\otimes\nu - \pi_c^{\alpha, \eps} \rangle
            = 0.
        \end{align*}
        As in the first case,
        we have the weak convergence of the OT plan 
        $\pi_c^{\alpha, \eps} \rightharpoonup \pi^*$
        for $\eps \to \infty$
        by the weak compactness of $\Pi(\mu,\nu)$.
        By the weak lower semicontinuity of the Rényi divergence we conclude 
        \begin{align*}
            0 \le \RR_\alpha(\pi^*\mid\mu\otimes\nu)
             \le \liminf_{\eps \to \infty}
            \RR_\alpha(\pi_c^{\alpha, \eps}\mid\mu\otimes\nu)
             \le \limsup_{\eps \to \infty}
            \RR_\alpha(\pi_c^{\alpha, \eps}\mid\mu\otimes\nu)
            \le 0
        \end{align*}
        so that
        $\pi_c^{\alpha, \eps} \rightharpoonup \pi^* = \mu\otimes\nu$.
        From \eqref{eq:RenyiConvLambdaToZero} we also have 
        \begin{equation*}
            \limsup_{\eps \to \infty}
            \OT_{\eps, \alpha}(\mu,\nu) 
            \le \langle c,\mu\otimes\nu \rangle.
        \end{equation*}
        Finally, 
        we obtain 
        \begin{align*}
            \langle c,\mu\otimes\nu\rangle
            & \ge 
            \limsup_{\eps \to \infty} 
            \OT_{\eps, \alpha}(\mu,\nu) \\
            & \ge 
            \liminf_{\eps \to \infty} \;
            \langle c, \pi_c^{\alpha, \eps}\rangle + \eps\underbrace{\RR_\alpha(\pi_c^{\alpha, \eps}\mid\mu\otimes\nu)}_{\ge 0}
            \ge 
            \liminf_{\eps \to \infty} \;
            \langle c, \pi_c^{\alpha, \eps}\rangle \\
            & \ge \langle c,\mu\otimes\nu\rangle,
        \end{align*}
        which yields the assertion.
        
        \item 
        Let $(\alpha_\ell)_{\ell \in \N} \subset (0,1)$,
        such that $\alpha_\ell \to 0$ for $\ell \to \infty$.
        Let $\pi_c^{\alpha_\ell,\eps}$ be the solution to $\OT_{\eps, \alpha_\ell}(\mu, \nu)$
        and $\pi_c \in \Pi(\mu,\nu)$ a solution of $\OT(\mu, \nu)$.
        As in the first case,
        we have the weak convergence of $\pi_c^{\alpha_\ell,\eps}$ 
        to some $\pi^* \in \Pi(\mu,\nu)$ for $\ell \to \infty$. 
        
        First,
        we show that 
        $\RR_{\alpha_\ell}(\pi_c^{\alpha_\ell, \eps} \mid \mu\otimes\nu) \to 0$ 
        for $\ell \to \infty$;
        second, we show that $\langle c, \pi_c^{\alpha_\ell, \varepsilon}\rangle \to \langle c, \pi_c\rangle$
        for $\ell \to \infty$.
        Combining both yields the assertion.
        
        By \Cref{thm:dual_formulation}, i.e. \eqref{eq:dual_pre_formulation},
        we have
        \begin{equation*}
            \frac{\diff \pi_c^{\alpha_\ell,\eps}}{\diff{\mu\otimes\nu}}
            = \frac{\alpha_\ell(1 - \alpha_\ell)^{-1}}{\left\langle \left|\frac{1}{\eps}\gamma_{\alpha_{\ell}, \hat{f} \oplus \hat{g}}\right|^{\frac{\alpha_\ell}{\alpha_\ell - 1}}, \mu \otimes\nu \right\rangle}
            \left|\frac{1}{\eps}\gamma_{\alpha_{\ell}, \hat{f} \oplus \hat{g}}\right|^{\frac{1}{\alpha_\ell - 1}},
        \end{equation*}
        where 
        $0 < \frac{1}{\eps} \gamma_{\alpha_{\ell}, \hat{f} \oplus \hat{g}} \in \C(X \times X)$.
        Utilizing \eqref{eq:rn_opt_plan}, 
        we can bound the Rényi divergence
        using the Radon-Nikodym derivative from \eqref{eq:rn_opt_plan}
        \begin{align}
            0 & \le \RR_{\alpha_\ell}(\pi_c^{\alpha_\ell, \eps} \mid \mu\otimes\nu) \notag\\
            & = \frac{1}{\alpha_\ell - 1}\ln\left(\int_{X \times X}\left(\frac{\diff \pi_c^{\alpha_\ell,\eps}}{\diff{\mu\otimes\nu}}\right)^{\alpha_\ell} \diff{\mu\otimes\nu}\right) \notag\\
            & = \frac{1}{\alpha_\ell - 1}
            \left\{\ln\left(\left(\frac{\alpha_\ell}{1 - \alpha_\ell}\right)^{\alpha_\ell}
            \int\limits_{X \times X} \frac{\frac{1}{\eps}\gamma_{\alpha_{\ell}, \hat{f} \oplus \hat{g}}^{\frac{\alpha_\ell}{\alpha_\ell - 1}}}{\left(\int_{X \times X} \frac{1}{\eps}\gamma_{\alpha_{\ell}, \hat{f} \oplus \hat{g}}^{\frac{\alpha_\ell}{\alpha_\ell - 1}} \diff{\mu\otimes\nu} \right)^{\alpha_\ell}}\diff {\mu\otimes\nu}\right)\right\} \notag\\
            & = \frac{1}{\alpha_\ell - 1}
            \left\{\alpha_\ell\ln\left(\frac{\alpha_\ell}{1 - \alpha_\ell}\right)
            + \ln\left(\left[\int_{X \times X} \frac{1}{\eps}\gamma_{\alpha_{\ell}, \hat{f} \oplus \hat{g}}^{\frac{\alpha_\ell}{\alpha_\ell - 1}}\diff{\mu\otimes\nu}\right]^{1 - \alpha_\ell}\right)\right\} \notag\\
            & = \frac{1}{\alpha_\ell - 1}
            \left\{\alpha_\ell(\ln(\alpha_\ell) - \ln(1 - \alpha_\ell))
            + (1 - \alpha_\ell)\ln\langle |\tfrac{1}{\eps}\gamma_{\alpha_{\ell}, \hat{f} \oplus \hat{g}}|^{\frac{\alpha_\ell}{\alpha_\ell - 1}},\mu\otimes\nu\rangle
            \right\} \notag\\
            & = \frac{1}{\alpha_\ell - 1}
            \Big[\alpha_\ell(\ln(\alpha_\ell) - \ln(1 - \alpha_\ell))\Big]
            - \ln\langle |\tfrac{1}{\eps}\gamma_{\alpha_{\ell}, \hat{f} \oplus \hat{g}}|^{\frac{\alpha_\ell}{\alpha_\ell - 1}},\mu\otimes\nu\rangle \notag\\
            & \le \underbrace{\frac{1}{\alpha_\ell - 1}
            \Big[\alpha_\ell(\ln(\alpha_\ell) - \ln(1 - \alpha_\ell))\Big]}_{\to 0, \ \ell \to \infty}
            + \underbrace{\frac{\alpha_\ell}{1 - \alpha_\ell}}_{\to 0, \ \ell \to \infty}
            \Big[\ln(\varepsilon^{-1}) 
            + \ln\underbrace{\langle \gamma_{\alpha_{\ell}, \hat{f} \oplus \hat{g}},\mu\otimes\nu\rangle}_{\le \tfrac{1}{\varepsilon}\OT(\mu, \nu) + \hat C_{\alpha, \varepsilon}})\Big]
            \label{eq:PreRenyialphaConvtoZero} \notag \\
            & \to 0 \quad \text{for } \ell \to \infty, 
        \end{align}
        where $0 \leq \hat C_{\alpha, \varepsilon} \to 0$
        for $\alpha \searrow 0$ for all $\varepsilon > 0$.
        We now prove the last bound 
        $\langle \gamma_{\alpha_{\ell}, \hat{f} \oplus \hat{g}},\mu\otimes\nu\rangle \le \frac{1}{\varepsilon}\OT(\mu, \nu) + \hat C_{\alpha, \varepsilon}$
        with the convergences properties of $0 \leq C_{\alpha, \varepsilon}$ in $\alpha \searrow 0$.
        By~\cite[Thm.~1.3]{V03}, the dual formulation of the Kantorovich problem reads as
        \begin{equation*}
            \OT(\mu,\nu) = \max_{\substack{(f,g) \in L^1(\mu)\times L^1(\nu) \\ f \oplus g \le c}}
            \langle f \oplus g,\mu \otimes \nu\rangle.
        \end{equation*}
        We rewrite the argument of the logarithm in \eqref{eq:PreRenyialphaConvtoZero}
        \begin{align*}
            \langle \gamma_{\alpha_{\ell}, \hat{f} \oplus \hat{g}},\mu\otimes\nu\rangle
            &= \tfrac{1}{\varepsilon}(\langle \hat f \oplus \hat g, \mu\otimes\nu\rangle - \OT_{\alpha_\ell,\varepsilon}(\mu, \nu) + C_{\alpha, \varepsilon})\\
            &\leq \tfrac{1}{\varepsilon}(\OT(\mu,\nu) - \OT_{0,\varepsilon}(\mu,\nu) + C_{\alpha, \varepsilon})\\
            &= \tfrac{1}{\varepsilon}(\OT(\mu,\nu) - \OT_{0,\varepsilon}(\mu,\nu)) - \tfrac{\alpha}{1-\alpha}\underbrace{\ln(\varepsilon)}_{\geq 1-\varepsilon^{-1}} - \tfrac{\alpha}{1-\alpha}(\ln(\tfrac{\alpha}{1-\alpha})-1)\\
            &\leq \tfrac{1}{\varepsilon}(\OT(\mu,\nu) - \OT_{0,\varepsilon}(\mu,\nu)) 
            + \underbrace{\tfrac{\alpha}{1-\alpha}
            (\varepsilon^{-1} - \ln(\tfrac{\alpha}{1-\alpha}))}_{=: \hat C_{\alpha, \varepsilon}}.
        \end{align*}
        Since $0 < \alpha/(1-\alpha) \searrow 0$ 
        for $\alpha \searrow 0$,
        and $0 < -\tfrac{\alpha}{1-\alpha}\ln(\tfrac{\alpha}{1-\alpha}) \searrow 0$
        for any $\tfrac{1}{2} > \alpha \searrow 0$
        we have $\hat C_{\alpha, \varepsilon} \searrow 0$.
        
        Let $\pi_c \in \Pi(\mu,\nu)$ be an optimal unregularized transport plan
        and $\pi_c^{\alpha_\ell,\eps}$ the OT plan of the regularized OT problem.
        Furthermore, 
        let $\kappa \in (0,1)$ and define $\tilde\pi_c \coloneqq (1-\kappa)\pi_c + \kappa\mu\otimes\nu$.
        We observe 
        \begin{align*}
            \langle c,\pi_c\rangle
            & \le \langle c,\pi_c^{\alpha_\ell,\eps}\rangle
            \le \langle c,\pi_c^{\alpha_\ell,\eps}\rangle 
            + \eps \RR_{\alpha_\ell}(\pi_c^{\alpha_\ell,\eps} \mid \mu\otimes\nu) \\
            & \le \langle c,\tilde\pi_c\rangle 
            + \eps \RR_{\alpha_\ell}(\tilde\pi_c \mid \mu\otimes\nu) 
            \to \langle c,\tilde\pi_c\rangle 
            + \eps \RR_0(\tilde\pi_c \mid \mu\otimes\nu), \qquad \ell\to\infty,
        \end{align*}
        i.e. by the lower semicontinuity and the weak convergence of $\pi_c^{\alpha_\ell,\eps}$
        we obtain 
        \begin{align*}
            \langle c,\pi_c\rangle
            \le \langle c,\pi_c^*\rangle 
            \le \langle c,\tilde\pi_c\rangle 
            + \eps \underbrace{\RR_0(\tilde\pi_c \mid \mu\otimes\nu)}_{= 0},
        \end{align*}
        since $\supp(\tilde\pi_c) = \supp(\mu\otimes\nu)$
        for all $\kappa > 0$.
        Let $\eps > 0$.
        We find $\kappa > 0$ such that 
        \begin{align*}
            \langle c,\tilde\pi_c\rangle 
            = (1-\kappa)\langle c,\pi_c\rangle + \kappa\langle c,\mu\otimes\nu\rangle 
            \le \langle c,\pi_c\rangle + \eps \\
            \Longleftrightarrow
            0 < \kappa \le \frac{\eps}{\langle c, \mu\otimes\nu - \pi_c\rangle}
        \end{align*}
        whenever $\langle c,\pi_c\rangle < \langle c,\mu\otimes\nu\rangle$,
        such that
        \begin{align*}
            \langle c,\pi_c\rangle
            \le \langle c,\pi_c^*\rangle 
            \le \langle c,\pi_c\rangle + \eps.
        \end{align*}
        If $\langle c,\pi_c\rangle = \langle c,\mu\otimes\nu\rangle$,
        then the statement is trivial.
        Together with the previous convergence result, this
        yields the assertion.

        \item 
        Let $\alpha' \in (0, 1]$.
        For any $\alpha \in (0, \alpha')$ let $\pi_c^{\alpha, \eps}, \pi_c^{\alpha', \eps} \in \Pi(\mu,\nu)$
        be the OT plans of the $\alpha$-Rényi and $\alpha'$-Rényi regularized 
        OT problem, respectively.
        By \Cref{lem:RenyiMonotone} 
        we have $\OT_{\eps, \alpha}(\mu,\nu) \le \OT_{\eps, \alpha'}(\mu,\nu)$.
        Now, let $(\alpha_\ell)_{\ell \in \N} \subset (0, \alpha')$ be a monotone sequence such that 
        $\alpha_\ell \nearrow \alpha'$ for $\ell \to \infty$.

        As in the first case, the weak compactness of $\Pi(\mu,\nu)$ implies that $\pi_c^{\alpha_\ell,\eps}$ converges weakly to some $\pi_c^{\alpha', \eps} \in \Pi(\mu,\nu)$.
        We obtain
        \begin{align*} 
            0 &
           \ge \liminf_{\ell \to \infty} \OT_{\eps, \alpha_\ell}(\mu,\nu) - \OT_{\eps, \alpha'}(\mu,\nu) \\
            & = \liminf_{\ell \to \infty} 
            \langle c,\pi_c^{\alpha_\ell,\eps}\rangle
            - \langle c,\pi_c^{\alpha', \eps}\rangle 
            + \eps \RR_{\alpha_\ell}(\pi_c^{\alpha_\ell,\eps} \mid \mu\otimes\nu)
            - \eps\RR_{\alpha'}(\pi_c^{\alpha', \eps} \mid \mu\otimes\nu) \\
            &\ge \liminf_{\ell \to \infty} 
            \left\{ \langle c,\pi_c^{\alpha_\ell,\eps}\rangle
            + \eps \RR_{\alpha_\ell}(\pi_c^{\alpha_\ell,\eps} \mid \mu\otimes\nu) \right\}
            - \langle c,\pi_c^{\alpha', \eps}\rangle - \eps\RR_{\alpha'}(\pi_c^{\alpha', \eps} \mid \mu\otimes\nu) \\
            &\ge \langle c, \pi_c^{\alpha', \eps}\rangle
            + \eps \RR_{\alpha'}(\pi_c^{\alpha', \eps} \mid \mu\otimes\nu)
            - \langle c, \pi_c^{\alpha', \eps} \rangle - \eps\RR_{\alpha'}(\pi_c^{\alpha', \eps} \mid \mu\otimes\nu) \\
            &\ge 0,
        \end{align*}
        where the last inequality is due to 
        \begin{equation*}
            \pi_{c}^{\alpha', \eps} 
            = 
            \argmin\left\{ \langle c, \pi \rangle + \eps \RR_{\alpha'}(\pi \mid \mu \otimes \nu) : \pi \in \Pi(\mu,\nu)\right\}.
        \end{equation*}
        Hence all inequalities are equalities and we obtain $\pi_c^{\alpha', \eps} = \pi_c^{\alpha', \eps}$ and thus \eqref{eq:planconvergencealphatoalpha'} as well as
        \begin{equation*}
            \liminf_{\ell \to \infty} \OT_{\eps, \alpha_{\ell}}(\mu, \nu)
            = \OT_{\eps, \alpha'}(\mu, \nu).
        \end{equation*}
        Furthermore, $\OT_{\eps, \alpha}(\mu,\nu) \le \OT_{\eps, \alpha'}(\mu,\nu)$ implies together with the previous line that
        \begin{equation*}
            \limsup_{\ell \to \infty} \OT_{\eps, \alpha_{\ell}}(\mu,\nu)
            \le \OT_{\eps, \alpha'}(\mu,\nu)
            = \liminf_{\ell \to \infty} \OT_{\eps, \alpha_{\ell}}(\mu, \nu),
        \end{equation*}
        that is, $\OT_{\eps, \alpha'}(\mu,\nu) = \lim_{\ell \to \infty} \OT_{\eps, \alpha_{\ell}}(\mu, \nu)$.
    \end{enumerate}
    All cases are validated, which yields the assertion.
\end{proof}

\section{A mirror descent algorithm for the primal problem}
 \label{sec:algo}

In this section,
we prove convergence of a mirror descent algorithm 
with a special step size to the solution of the primal problem \eqref{eq:dualSinkhornProblem}.
The proposed algorithm is summarized in \Cref{alg:MD}
and the step sizes are discussed in \Cref{prop:Polyak}, \Cref{cor:algMD} 
and in more detail in \Cref{remark:step_size}.

\textbf{Discretization.} \quad
First, let $X \subset \R^d$ be compact.
For $N \in \N$, we take a grid $(x_i)_{i = 1}^{N} \subset X$ 
and absolutely continuous probability measures $\mu,\nu \in \P(X)$.
We approximate $\mu$ by $\tilde{\mu}^{(N)} \coloneqq \sum_{k = 1}^{N} f(x_k) \delta_{x_k}$, where $f$ is the density of $\mu$ and then normalize the weights to obtain $\mu^{(N)} \coloneqq \sum_{k = 1}^{N} \frac{f(x_k)}{\sum_{j = 1}^{N} f(x_j)} \delta_{x_k} \eqqcolon \sum_{k = 1}^{N} a_k \delta_{x_k}$.
We apply the same procedure to $\nu$, obtaining $\nu^{(N)} \coloneqq \sum_{k = 1}^{N} b_k \delta_{x_k} \in \P(X)$.
Since both measures are discretized on the same grid, we simply consider their coefficients $\Vs \coloneqq (a_i)_{i=1}^{N}$ and $\Vt \coloneqq (b_i)_{i=1}^{N}$, respectively.
Furthermore, we denote the cost matrix by $\MM \coloneqq (c(x_i,x_j))_{i,j \in \{1,\ldots,N\}}$.

Hence, $\Vs,\Vt \in \Sigma_N$, 
$\MM \in \R_{+}^{N\times N}$ and the transport polytope becomes 
$U(\Vs,\Vt) \coloneqq \{\MP \in \Sigma_{N\times N} : \MP\1_N = \Vs, \MP^{\tT}\1_N = \Vt\}$,
where $\Sigma_{N\times N} \coloneqq \{ \MP \in \R_+^{N\times N}: \1_N^{\tT} \MP \1_N = 1 \}$ denotes the set of probability matrices 
and $\Sigma_N \coloneqq \{ \Vs \in \R_{\ge 0}^N: \1_N^{\tT} \Vs = 1 \}$ the probability simplex.

By $\odot$ and $\oslash$ we denote entrywise multiplication and division,
and by $\langle \cdot, \cdot \rangle_{\tF}$ resp. $\| \cdot \|_{\tF}$ the Frobenius inner product resp. norm on $\R^{N \times N}$.
The Euclidean inner product and norm on $\R^N$ 
are denoted by $\langle \cdot, \cdot\rangle$, 
respectively $\|\cdot\|_2$.
Furthermore, exponentiation of vectors and matrices is meant componentwise and their support, denoted by $\supp$, is the set of indices of non-zero entries.
Lastly, for matrices $\MP, \MQ \in \R^{N \times N}$, $\MP \ll \MQ$ means that $q_{i,j} = 0$ implies $p_{i, j} = 0$.

In this discretized setting, 
the Rényi and the Tsallis divergences become for $q, \alpha \in (0,1)$
\begin{equation} \label{eq:discreteRenyi}
    \RR_{\alpha}(\Vs \mid \Vt)
    = \frac{1}{\alpha - 1} \ln\left( \sum_{k = 1}^{N} s_k^{\alpha} t_k^{1 - \alpha} \right)
    \; \text{and} \;
    D_{f_q}(\Vs \mid \Vt)
    = \frac{1}{q - 1} \left( \sum_{k = 1}^{N} s_k^{q} t_k^{1 - q} - 1 \right),
\end{equation}
where $\ln(0) \coloneqq - \infty$,
and the Tsallis entropy becomes 
\begin{equation} 
    \TT_q(\Vs)
    = \frac{1}{q - 1} \left( 1 - \sum_{k = 1}^{N} s_k^{q} \right).
\end{equation}

We will use the mirror descent algorithm 
to solve the primal problem \eqref{eq:dualSinkhornProblem}, 
which in the discrete setting becomes
\begin{equation} \label{eq:discretePrimal}
    \argmin_{\MP \in U(\Vr, \Vc)}
    \quad
    \mathfrak{f}_{\MM, \Vr, \Vc}^{\alpha, \eps}(\MP),
\end{equation}
where
\begin{equation} \label{eq:mathfrakf}
    \mathfrak{f}_{\MM, \Vr, \Vc}^{\alpha, \eps}
    \colon U(\Vr, \Vc) \to [0, \infty), \qquad
    \MP \mapsto \langle \MM,\MP\rangle_F + \eps\RR_\alpha(\MP \mid \Vr\Vc^{\tT}).
\end{equation}

\begin{remark}[General mirror descent algorithm]
The update step of the general mirror descent algorithm~\cite{NY1983} 
to minimize a function $f \colon \R^n \to \R$
over a compact set $C \subset \R^n$ 
with entropy function $h \colon \R^n \to \R$
generating the Bregman divergence $D_h \colon \R^n \times \R^n \to [0, \infty)$, 
and step sizes $(\eta_k)_{k \in \N} \subset (0, \infty)$,
is the sequence starting at some $x^{(0)} \in C$ 
and then being defined iteratively by 
\begin{equation} \label{eq:MD}
    x^{(k)}
    = \argmin_{p \in C} D_h\left(p \mid
    (\nabla h)^{-1}\left( \nabla h(x^{(k - 1)}) - \eta_k \nabla f(x^{(k - 1)})\right)\right), \qquad k \in \N_{> 0}.
\end{equation}
\end{remark}
We choose $n = N^2$ and the Shannon entropy 
$h \colon [0, \infty)^{N \times N} \to \R$, $x \mapsto \sum_{i,j = 1}^{N} x_{i,j}\ln(x_{i,j}) - x_{i,j} + 1$ (and $h(x) = \infty$ else), 
which is $1$-strongly convex on $\Sigma_{N \times N}$ and generates the discrete KL divergence $D_h = \KL$,
where
\begin{equation*}
    \KL \colon \R_+^{N \times N} \times \R_+^{N \times N} \to [0, \infty), \qquad
    (\MP, \MQ) \mapsto \sum_{i, j = 1}^{N} p_{i, j} \ln\left(\frac{p_{i, j}}{q_{i, j}}\right)
    + q_{i, j} - p_{i, j},
\end{equation*}
with the conventions $0\cdot \ln(0) \coloneqq 0$ 
and $p \cdot \ln\left(\frac{p}{0}\right) \coloneqq \infty$
and $\KL(\MP, \MQ) = \infty$ else.

The gradient of $\mathfrak{f}_{\MM, \Vr, \Vc}^{\alpha, \eps}$ 
in \eqref{eq:mathfrakf} is given by 
\begin{equation*}
    \nabla \mathfrak{f}_{\MM, \Vr, \Vc}^{\alpha, \eps}(\MP)
    = \MM + \eps\frac{\alpha}{\alpha - 1}\frac{(\Vr\Vc^{\tT}\oslash\MP)^{1 - \alpha}}{\langle \MP^{\alpha}, (\Vr \Vc^{\tT})^{1 - \alpha} \rangle_F}, \qquad \MP \in U(\Vr, \Vc),
\end{equation*}
which is well-defined, since $\MP \in U(\Vr, \Vc)$ 
implies $\MP \ll \Vr \Vc^{\tT}$.

Hence for $f = \mathfrak{f}_{\MM, \Vr, \Vc}^{\alpha, \eps}$ 
and $C = U(\Vr, \Vc)$ the update \eqref{eq:MD} becomes
\begin{equation*}
    \MP^{(k)}
    = \argmin_{\MP \in U(\Vr, \Vc)} \KL\left(\MP \ \middle| \ \MP^{(k - 1)} \odot
    \exp\left( - \eta_k \MM - \eps \eta_k\frac{\alpha}{\alpha - 1}\frac{(\Vr\Vc^{\tT}\oslash\MP)^{1 - \alpha}}{\langle \MP^{\alpha}, (\Vr \Vc^{\tT})^{1 - \alpha} \rangle_F}\right)\right). 
\end{equation*}

For $\Vs,\Vt \in \Sigma_{N}$, the projection
\begin{equation*}
    \SK(\cdot, \Vs,\Vt) \colon \R_+^{N\times N} \to U(\Vs,\Vt), \qquad
    \MX \mapsto \argmin_{\MP \in U(\Vs,\Vt)} \KL(\MP \mid \MX),
\end{equation*}
can be computed using a variant of Sinkhorn algorithm,
where the marginals are $\Vs,\Vt \in \Sigma_N$ 
instead of $\1$'s~\cite{BCCNP15,LS2023}.

The following rather new result from \cite{YL22} generalizes the convergence of the mirror descent algorithm 
with a modified Polyak step size 
to the minimum of a only locally Lipschitz continuous function.
Importantly, to calculate this step size 
we do not need the optimal value of the objective
in contrast to the original Polyak step size~\cite{P69,P87}.

\begin{proposition}[{\cite[Thm.~4.1]{YL22}}]
\label{prop:Polyak}
    Let $f \in \Gamma_0(\R^n)$ 
    and $C \subset \R^n$ be a non-empty, closed, convex set.
    Define $K \coloneqq C \cap \intt(\dom(h))$.
    Assume that the following assumptions are fulfilled: 
    \begin{enumerate}
        \item
        The optimal value $f^* = \inf_{x \in C} f(x)$ is finite.
        
        \item
        The function $h$ is Legendre and 
        the set $C \cap \dom(f)$ contained in the closure of $\dom(h)$.
        
        \item
        The relative interior of $\dom(f)$ contains $K$.
        
        \item
        The function $h$ is strongly convex on $K$
        with respect to some norm $\|\cdot\|$, that is,
        \begin{equation*}
            D_h(x,y) \ge \frac{1}{2}\|x - y\|^2 \qquad \forall x,y \in K.
        \end{equation*}
        
        \item
        The subgradient of $f$ is bounded in any compact subset of $K$.
    \end{enumerate}
    Then, $\inf_{n \in \N} f(x_n) = f^*$,
    where $(x_n)_{n\in \N}$ is constructed as in~\cite[Alg.~3]{YL22}.
\end{proposition}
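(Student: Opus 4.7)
The plan is to adapt the classical mirror descent convergence analysis to the only locally Lipschitz continuous setting, leveraging the adaptive modified Polyak step size to replace a priori knowledge of $f^*$ by a dynamically updated target level $\ell_n \le f^*$. Assumptions~2 and~3 guarantee that each iterate $x_n$ lives in $K = C \cap \intt(\dom(h))$, so that $\nabla h$ is bijective on the relevant set (being Legendre) and \eqref{eq:MD} is well-defined and equivalently rewrites as $\nabla h(x_{n+1}) = \nabla h(x_n) - \eta_n g_n$ for some $g_n \in \partial f(x_n)$.

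First I would derive the standard master inequality: fix any minimizer $x^* \in C$, apply the three-point identity for $D_h$ to the update, use convexity of $f$ to replace $\langle g_n, x_n - x^*\rangle$ by $f(x_n) - f^*$, and absorb the $D_h(x_{n+1}, x_n)$ term using strong convexity (Assumption~4) together with Fenchel--Young. This yields
\begin{equation*}
    D_h(x^*, x_{n+1}) \;\le\; D_h(x^*, x_n) - \eta_n\bigl(f(x_n) - f^*\bigr) + \tfrac{1}{2}\eta_n^2 \|g_n\|_*^2 .
\end{equation*}
Plugging in the YL22 step size $\eta_n = (f(x_n) - \ell_n)/\|g_n\|_*^2$ and using the invariant $\ell_n \le f^*$ maintained by the restart/halving rule of the algorithm gives a per-step decrease of $D_h(x^*, x_n)$ proportional to $(f(x_n) - \ell_n)(f(x_n) - f^*)/\|g_n\|_*^2$.

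The main obstacle is the absence of a global Lipschitz constant on $f$: Assumption~5 only provides boundedness of $\|g_n\|_*$ on compact subsets of $K$. I would handle this by first observing that the master inequality forces $D_h(x^*, x_n)$ to be non-increasing (at least within each restart phase), so the trajectory stays inside the Bregman sublevel set $S_0 = \{x \in K : D_h(x^*, x) \le D_h(x^*, x_0)\}$. Under the Legendre and strong convexity assumptions this set has compact closure inside $K$ (strong convexity gives boundedness; essential smoothness prevents accumulation at $\partial \dom(h)$), so Assumption~5 yields a uniform bound $\|g_n\|_* \le L$ along the entire trajectory. A contradiction argument then concludes: if $\liminf_n f(x_n) > f^* + 2\delta$ for some $\delta > 0$, the restart rule eventually enforces $\ell_n \ge f^* - \delta$, so $f(x_n) - \ell_n \ge \delta$ while $f(x_n) - f^* \ge 2\delta$, producing a strictly positive per-step decrease of the nonnegative quantity $D_h(x^*, x_n)$ bounded below by $\delta^2/L^2$, hence a contradiction after finitely many steps. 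Therefore $\inf_n f(x_n) = f^*$.
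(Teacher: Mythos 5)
The paper offers no proof of this proposition at all: it is imported verbatim as a citation of \cite[Thm.~4.1]{YL22}, so there is no in-paper argument to compare yours against. Your sketch follows the natural template (three-point identity, convexity, strong convexity plus Fenchel--Young to get the master inequality, then the Polyak-type step), and that is indeed the right family of ideas; but two of your steps conceal genuine gaps, and they are exactly the points where the cited theorem earns its keep.

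First, the claim that the master inequality ``forces $D_h(x^*,x_n)$ to be non-increasing'' under the invariant $\ell_n \le f^*$ is false as stated. Substituting $\eta_n = (f(x_n)-\ell_n)/\|g_n\|_*^2$ into your inequality gives a per-step change of
\begin{equation*}
    \frac{f(x_n)-\ell_n}{\|g_n\|_*^2}\Bigl(\tfrac{1}{2}\bigl(f(x_n)-\ell_n\bigr)-\bigl(f(x_n)-f^*\bigr)\Bigr),
\end{equation*}
which is negative only when $f(x_n)-\ell_n < 2\bigl(f(x_n)-f^*\bigr)$; if the target level $\ell_n$ sits far below $f^*$ the step overshoots and $D_h(x^*,x_n)$ can increase. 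So monotonicity of the Bregman distance, and with it your containment of the trajectory in the sublevel set $S_0$, does not follow from $\ell_n \le f^*$ alone --- controlling this is precisely the role of the level-update/restart mechanism in \cite[Alg.~3]{YL22}, which you invoke only informally. Second, even granting boundedness of the trajectory, the assertion that $S_0$ has compact closure \emph{inside} $K$ (so that Assumption~5 applies uniformly) is not a consequence of $h$ being Legendre together with strong convexity on $K$: one must rule out iterates accumulating on $\partial\,\dom(h)$, which requires the Bregman-function-type coercivity $D_h(x,y_n)\to\infty$ as $y_n$ approaches the boundary, a property that has to be argued (or assumed) separately. Since Assumption~5 is deliberately phrased for compact subsets of $K$ rather than for all of $K$, this is the step the argument cannot simply wave through. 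Both gaps are repairable by following the actual level-management analysis of \cite{YL22}, but as written the proof does not close.
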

\begin{remark}
   The assumption 5 of \Cref{prop:Polyak}
    is equivalent to the local Lipschitz continuity of $f$.
\end{remark}

To guarantee the convergence of the mirror descent algorithm~\cite[Prop.~3.2, Thm.~4.1]{BT03}, 
we have to show local Lipschitz continuity of 
$\mathfrak{f}_{\MM, \Vr, \Vc}^{\alpha, \eps}$ on the restricted version of $U(\Vr,\Vc)$,
\begin{equation*}
    U(\Vr, \Vc)_{>0} \coloneqq \{\MP \in U(\Vr,\Vc) : p_{ij} > 0 \quad \text{for all} \quad (i,j) \in \supp(\Vr\Vc^{\tT})\}.
\end{equation*}
For any $\eps > 0$ we also define the convex set
\begin{equation*}
    U(\Vr, \Vc)_\eps \coloneqq \{\MP \in U(\Vr,\Vc) : 
    p_{ij}\ge \eps \quad \text{for all} \quad (i,j) \in \supp(\Vr\Vc^{\tT})\}.
\end{equation*}

\begin{lemma}
\label{lem:RenyiLipUeps}
    The functional \eqref{eq:mathfrakf} is Lipschitz continuous
    on $U(\Vr, \Vc)_\eps$ for all $\eps > 0$
    and locally Lipschitz on $U(\Vr,\Vc)_{>0}$,
    each with respect to $\| \cdot \|_{\tF}$.
\end{lemma}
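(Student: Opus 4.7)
The plan is to split $\mathfrak{f}_{\MM, \Vr, \Vc}^{\alpha, \eps}$ into its linear and divergence parts and show that each is (globally, respectively locally) Lipschitz, since Lipschitz constants add. The map $\MP \mapsto \langle \MM, \MP \rangle_{\tF}$ is globally $\| \MM \|_{\tF}$-Lipschitz on all of $\R^{N \times N}$ by Cauchy-Schwarz, so the work lies entirely with the Rényi term
\begin{equation*}
    g \colon U(\Vr, \Vc)_{>0} \to \R, \qquad
    \MP \mapsto \RR_\alpha(\MP \mid \Vr \Vc^{\tT}).
\end{equation*}
Note that on $U(\Vr, \Vc)_{> 0}$ the logarithm in the definition of $\RR_\alpha$ has a strictly positive argument, so $g$ is smooth there and it suffices to uniformly bound $\| \nabla g \|_{\tF}$ on the relevant set, then invoke the mean value theorem along line segments (which stay inside the convex sets $U(\Vr, \Vc)_\eps$ and, locally, inside $U(\Vr, \Vc)_{>0}$).

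First I will handle the global claim on $U(\Vr, \Vc)_\eps$. Using the gradient formula
\begin{equation*}
    \nabla g(\MP)
    = \frac{\alpha}{\alpha - 1} \, \frac{(\Vr\Vc^{\tT} \oslash \MP)^{1 - \alpha}}{\langle \MP^{\alpha}, (\Vr\Vc^{\tT})^{1 - \alpha}\rangle_{\tF}},
\end{equation*}
where all operations are restricted to $\supp(\Vr \Vc^{\tT})$, I bound numerator and denominator separately on $U(\Vr, \Vc)_\eps$. For $(i,j) \in \supp(\Vr \Vc^{\tT})$ we have $p_{ij} \ge \eps$, hence
\begin{equation*}
    \left( \tfrac{r_i c_j}{p_{ij}} \right)^{1 - \alpha}
    \le \left( \tfrac{\| \Vr \Vc^{\tT} \|_{\infty}}{\eps} \right)^{1 - \alpha},
\end{equation*}
which bounds each entry of the numerator uniformly. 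For the denominator, the same lower bound $p_{ij} \ge \eps$ on the support gives
\begin{equation*}
    \langle \MP^{\alpha}, (\Vr\Vc^{\tT})^{1 - \alpha}\rangle_{\tF}
    \ge \eps^{\alpha} \sum_{(i,j) \in \supp(\Vr \Vc^{\tT})} (r_i c_j)^{1 - \alpha}
    > 0,
\end{equation*}
which is a strictly positive constant independent of $\MP$. Combining these estimates yields a uniform bound on $\|\nabla g(\MP)\|_{\tF}$ for all $\MP \in U(\Vr, \Vc)_\eps$, and hence global Lipschitz continuity of $g$ on the convex set $U(\Vr, \Vc)_\eps$.

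For the local claim, fix $\MP_0 \in U(\Vr, \Vc)_{>0}$ and set $\eta \coloneqq \tfrac{1}{2} \min \{ p_{0,ij} : (i,j) \in \supp(\Vr \Vc^{\tT}) \} > 0$. The open Frobenius-ball $B(\MP_0, \eta)$ intersected with $U(\Vr, \Vc)$ lies inside $U(\Vr, \Vc)_\eta$, so the global bound above applies there and provides a local Lipschitz constant around $\MP_0$. I do not anticipate a serious obstacle: the only subtlety is keeping the support conventions consistent, i.e. ensuring that the zero entries outside $\supp(\Vr \Vc^{\tT})$ do not contribute to the gradient (they do not, by $0 \cdot \infty = 0$, and equivalently by the fact that $p_{ij} = 0$ whenever $r_i c_j = 0$ is forced for $\MP \in U(\Vr, \Vc)$), so all bounds genuinely range only over support indices where the denominators are strictly positive.
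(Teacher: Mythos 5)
Your proof is correct. It differs from the paper's mechanism, though both rest on the same two quantitative facts: the entrywise lower bound $p_{ij} \ge \eps$ on $\supp(\Vr\Vc^{\tT})$ and a strictly positive lower bound on the argument of the logarithm. The paper never touches the gradient: it writes the Rényi term as $\frac{\eps}{\alpha-1}\ln(\II_\alpha(\MP \mid \Vr\Vc^{\tT}))$ with $\II_\alpha(\MP \mid \Vr\Vc^{\tT}) = \sum_{i,j} p_{ij}^{\alpha}(r_ic_j)^{1-\alpha}$, bounds $\II_\alpha \ge \beta$ where $\beta$ is the smallest nonzero entry of $\Vr\Vc^{\tT}$ (via $p_{ij}^{\alpha}(r_ic_j)^{1-\alpha} \ge p_{ij}r_ic_j$ for values in $[0,1]$), and then concludes by composing the $\beta^{-1}$-Lipschitz function $\ln$ on $[\beta,\infty)$ with the sum of the $\alpha\eps^{\alpha-1}$-Lipschitz maps $x \mapsto x^{\alpha}$ on $[\eps,\infty)$. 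You instead bound $\|\nabla g\|_{\tF}$ uniformly on $U(\Vr,\Vc)_\eps$ and integrate along segments; this reuses the gradient formula the paper needs anyway for \Cref{alg:MD} and produces one clean constant, at the small cost of having to justify the mean value theorem on a set that is only relatively open in the affine subspace $\{\MP : \MP\1 = \Vr,\ \MP^{\tT}\1 = \Vc\}$ — which is fine here since the off-support entries are frozen at zero along any segment in $U(\Vr,\Vc)$ and the function is smooth in the support coordinates on $\{p_{ij} > \eps/2\}$, as you note. Your localization step for the second claim (shrinking to $U(\Vr,\Vc)_\eta$ with $\eta$ half the minimal support entry of $\MP_0$) is exactly the content of the paper's terser remark that the obstruction is only the non-Lipschitzness of $\ln$ and $x \mapsto x^\alpha$ at zero. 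A cosmetic point: the paper's denominator lower bound $\beta$ is independent of $\eps$ and valid on all of $U(\Vr,\Vc)$, whereas yours degrades as $\eps^{\alpha}$; this does not matter for the statement, but the paper's bound is slightly sharper in that one place.
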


\begin{proof}
    The linear part $U(\Vr, \Vc) \to \R$, $\MP \mapsto \langle \MM, \MP\rangle$ of 
    $\mathfrak{f}_{\MM, \Vr, \Vc}^{\alpha, \eps}$ is $\| \MM \|_{\tF}$-Lipschitz.
    We have 
    \begin{equation*}
        \mathfrak{f}_{\MM, \Vr, \Vc}^{\alpha, \eps}(\MP)
        = \langle \MM, \MP \rangle
        + \frac{\eps}{\alpha - 1}\ln(\II_{\alpha}(\MP \mid \Vr \Vc^{\tT})),
    \end{equation*}
    where $\II_{\alpha}(\;\cdot \mid \Vr \Vc^{\tT}) \colon U(\Vr, \Vc) \to [0, \infty)$, 
    $\MP \mapsto \sum_{i, j = 1}^{N} p_{i, j}^{\alpha} (r_i c_j)^{1 - \alpha}$ 
    is the $\alpha$-mutual information.
    
    Let $\MP \in U(\Vr, \Vc)$.
    Then we have $\MP \ll \Vr\Vc^{\tT}$, see \cite[p.~20]{PC19}, 
    so that $\II_{\alpha}(\;\cdot \mid \Vr \Vc^{\tT})$ is well-defined.
    We set $\beta \coloneqq \min\{ r_i c_j: i, j \in \{ 1, \ldots, N \}, \ r_i c_j > 0\} > 0$ to be the smallest nonzero element of $\Vr\Vc^{\tT}$.
    Then, by Jensen's inequality we have
    \begin{align*}
        \II_\alpha(\MP \mid \Vr\Vc^{\tT})
        = \sum_{i,j = 1}^{N} p_{i,j}^\alpha(r_ic_j)^{1 - \alpha}
        &\ge \sum_{i,j = 1}^{N} p_{i,j}r_ic_j \\
        & = \sum_{\substack{i, j = 1 \\ r_i c_j > 0}}^{N} p_{i,j}r_ic_j
        \ge \beta \sum_{\substack{i, j = 1 \\ r_i c_j > 0}}^{N} p_{i,j}
        = \beta \sum_{i, j = 1}^{N} p_{i, j}
        = \beta.
    \end{align*}
    For any $\beta > 0$, the function $\ln \colon [\beta, +\infty) \to \R$ is $\beta^{-1}$-Lipschitz
    and for any $\eps > 0$ and $\alpha \in (0, 1)$ the function $g_\alpha \colon [\eps, +\infty) \mapsto \R, x \mapsto x^\alpha$ is $\alpha \eps^{\alpha - 1}$-Lipschitz.
    Hence, the first assertion follows from the fact 
    that the sum and the composition of Lipschitz functions is Lipschitz.

    The local Lipschitz continuity on $U(\Vr,\Vc)_{>0}$
    follows from the fact that the objective is the composition of a Lipschitz function and the locally Lipschitz continuous function $\ln \colon (0,\infty) \to \R$.
\end{proof}

\begin{algorithm}[t]
\caption{Mirror descent algorithm to solve \eqref{eq:discretePrimal}---$\RR_\alpha$ROT.}
\KwData{marginals $\Vr, \Vc \in \Sigma_N$, distance matrix $\MM$, 
regularization parameter $\eps > 0$, order $\alpha \in (0,1)$.}
\KwResult{The solution $\MP^*$ of \eqref{eq:discretePrimal}}
$\MP^{(0)}\gets \Vr\Vc^{\tT}$
\Comment{Ensures $\MP^{(0)} \in U(\Vr, \Vc)$.} \\
\For{$k = 1,2,3,\ldots$}{
    Choose the step size $\eta_k$ according to \Cref{remark:step_size}.\\

    $\displaystyle \MP^{(k)}\gets \SK\left(\MP^{(k - 1)} \odot
    \exp\left( - \eta_k \MM - \eta_k \eps \frac{\alpha}{\alpha - 1}\frac{(\Vr\Vc^{\tT}\oslash\MP)^{1 - \alpha}}{\langle \MP^{\alpha}, (\Vr \Vc^{\tT})^{1 - \alpha} \rangle}\right), \Vr, \Vc\right)$ \\
}
\label{alg:MD}
\end{algorithm}

\begin{corollary}
\label{cor:algMD}
    \Cref{alg:MD}, with $(\eta_k)_{k \in \N}$ chosen as in~\cite[Alg.~3]{YL22},
    converges to the unique minimizer of \eqref{eq:discretePrimal}.
\end{corollary}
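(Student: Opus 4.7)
The plan is to apply \Cref{prop:Polyak} with $f = \mathfrak{f}_{\MM, \Vr, \Vc}^{\alpha, \eps}$, $C = U(\Vr, \Vc)$ and $h$ the negative Shannon entropy on $\R_+^{N \times N}$, then upgrade its subsequential conclusion to convergence to the minimizer using compactness and strict convexity. After restricting to the active indices $\{(i,j) : r_i c_j > 0\}$ (which are the only nonzero entries possible in any element of $U(\Vr, \Vc)$ anyway), one has $K = C \cap \intt(\dom h) = U(\Vr, \Vc)_{>0}$, a nonempty convex set which equals $\relint(\dom \mathfrak{f})$.

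First I would verify the five hypotheses of \Cref{prop:Polyak}. Hypothesis (1): the infimum is finite since $\mathfrak{f}(\Vr \Vc^{\tT}) = \langle \MM, \Vr \Vc^{\tT} \rangle_{\tF} < \infty$, using $\RR_\alpha(\Vr \Vc^{\tT} \mid \Vr \Vc^{\tT}) = 0$. Hypothesis (2): the negative Shannon entropy is classically a Legendre function on $\R_+^{N \times N}$, and $C \cap \dom(\mathfrak{f}) = U(\Vr, \Vc) \subset \R_+^{N \times N} = \overline{\dom h}$. Hypothesis (3) holds by the identification of $K$ with $\relint(\dom \mathfrak{f})$ above. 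Hypothesis (4) is Pinsker's inequality, which gives $1$-strong convexity of $h$ on $\Sigma_{N \times N}$ with respect to $\|\cdot\|_1$, and hence on $K$. Hypothesis (5) asks for boundedness of $\partial \mathfrak{f}$ on compact subsets of $K$; by convexity of $\mathfrak{f}$, this is equivalent to local Lipschitz continuity on $K$, which is exactly the second statement of \Cref{lem:RenyiLipUeps}.

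Applying \Cref{prop:Polyak} then yields $\inf_{n \in \N} \mathfrak{f}(\MP^{(n)}) = \mathfrak{f}^* \coloneqq \min_{\MP \in U(\Vr, \Vc)} \mathfrak{f}(\MP)$. The discrete analogue of \Cref{theorem:PiDivReg}, together with the strict convexity of $\RR_\alpha(\,\cdot \mid \Vr \Vc^{\tT})$ from \Cref{prop:RenyiDivProps}, provides a unique minimizer $\MP^*$. By compactness of $U(\Vr, \Vc)$ and continuity of $\mathfrak{f}$ on $U(\Vr, \Vc)$, any cluster point of a subsequence realizing the infimum must attain $\mathfrak{f}^*$ and therefore coincide with $\MP^*$; a standard subsequence-of-subsequences argument then yields convergence of the whole iterate sequence to $\MP^*$.

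The main obstacle is precisely this last upgrade: \cite[Thm.~4.1]{YL22} only delivers the subsequential guarantee $\inf_n \mathfrak{f}(\MP^{(n)}) = \mathfrak{f}^*$ rather than $\lim_n \mathfrak{f}(\MP^{(n)}) = \mathfrak{f}^*$. Closing this gap needs either a monotonicity property of the modified Polyak step size (which would immediately upgrade $\inf$ to $\lim$) or the compactness/uniqueness argument sketched above. A secondary subtlety worth flagging in the write-up is the reduction to the active sub-matrix when $\Vr \Vc^{\tT}$ has zero entries, which is needed for $K$ to be nonempty and for the Legendre/strong-convexity hypotheses to apply in their stated form.
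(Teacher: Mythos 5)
Your proposal is correct and follows essentially the same route as the paper: both verify the five hypotheses of \Cref{prop:Polyak} with $f = \mathfrak{f}_{\MM, \Vr, \Vc}^{\alpha, \eps}$, $C = U(\Vr, \Vc)$ and $h$ the Shannon entropy, identifying $\intt(\dom h) = (0,\infty)^{N \times N}$ and invoking \Cref{lem:RenyiLipUeps} for the subgradient bound. You are in fact more careful than the paper on the final step: the paper stops after checking the hypotheses and does not explicitly upgrade the conclusion $\inf_{n} \mathfrak{f}(\MP^{(n)}) = \mathfrak{f}^*$ of \Cref{prop:Polyak} to convergence of the iterates to the unique minimizer, a point you rightly flag as requiring either a monotonicity property of the modified Polyak step size or the compactness/strict-convexity argument you sketch.
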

\begin{proof}
    To apply \Cref{prop:Polyak}, we choose $C = U(\Vr,\Vc) \subset \R^{N\times N}$, which is non-empty, closed and convex, $n \coloneqq N^2$, 
    $f = \mathfrak f_{\MM, \Vr,\Vc}^{\alpha, \eps}$
    and $h$ as the Shannon entropy,
    which is strongly convex with respect to the 1-norm
    and due to $\|x\|_1 \ge \|x\|_2$ for $x \in \R^n$,
    assumption 4 in \Cref{prop:Polyak} is fulfilled for both norms.
    
    Notice, that $\dom(f) \supset \R_{>0}^{N\times N}$,
    more precisely 
    \begin{align*}
        \dom(f) 
        & = \{\MX \in \R_{+}^{N\times N} : \supp(\MX) \cap \supp(\Vr\Vc^{\tT}) \neq \emptyset\},
    \end{align*}
    ensuring that the argument in the logarithm of the Rényi divergence is positive,
    hence we have $\overline{\dom(f)} = \R_+^{N\times N}$ such that 
    $\relint(\dom(f)) = (0, \infty)^{N\times N}$
    and $\intt(\dom(h)) = (0,\infty)^{N\times N}$.

    On the domain, $f$ is continuous, hence lower semicontinuous on $\R^{N\times N}$ using the extension $\overline{-\ln} \colon \R \to \overline{\R}$, $x \mapsto -\ln(x)$ if $x > 0$ and $+\infty$ otherwise,
    moreover convex and proper.
    
    We conclude with the well known properties of $h$ \cite{CS04}
    and the results from \Cref{lem:RenyiLipUeps}.
\end{proof}

\begin{remark}[Choice of step size] \label{remark:step_size}
    By \Cref{lem:RenyiLipUeps}, $\mathfrak{f}_{\MM,\Vr,\Vc}^{\eps, \alpha}$ 
    is locally Lipschitz continuous on 
    $K = U(\Vr,\Vc) \cap \intt(\dom h) = U(\Vr,\Vc)_{>0}$, so that
    for any compact subset of $K$ the restriction of $\nabla\mathfrak{f}_{\MM,\Vr,\Vc}^{\eps, \alpha}$ to that subset is bounded.
    By \Cref{cor:algMD} \Cref{alg:MD} converges when 
    using the generalized Polyak step size 
    \begin{equation*}
        \eta_k
        \coloneqq \frac{\mathfrak{f}_{\MM,\Vc,\Vr}^{\eps, \alpha}(\MP^{(k)}) - \hat{\mathfrak f}_k}{c\|\nabla \mathfrak{f}_{\MM,\Vc,\Vr}^{\eps, \alpha}(\MP^{(k)})\|_*^2},
    \end{equation*}
    where $\hat{\mathfrak{f}}_k$ is some minimal functional value 
    computed to the $k$-th iteration with some error $\delta_k$ and some fixed constant $c$
    and $\|\cdot\|_*$ denotes the dual norm of $\|\cdot\|$ for which assumption 5 in \Cref{prop:Polyak} is fulfilled,
    see~\cite[Alg.~3]{YL22} for more details.
    Notice that on $\R^n$ all norms are equivalent.
\end{remark}

\section{Numerical experiments} \label{sec:Num}

We investigate the regularized transport plans 
for synthetic marginals, illustrate the convergence results from \Cref{theorem:ConvToOTKL}, 
and then show the superiority over other regularizations 
in the real-world application of predicting voter migration.

Our numerical examples are performed in dimensions $d \in \{1,2\}$ 
with ground space $X = [0, 1]^d$, which is uniformly discretized.

\subsection{Validation of \texorpdfstring{\Cref{theorem:ConvToOTKL}}{Theorem 2.6}}

\begin{table}[b!]
\resizebox{\textwidth}{!}{
\begin{tabular}{c | c c c c}
    \toprule 
                & Rényi regularized OT                          & KL regularized OT                     & Tsallis regularized OT                & OT \\
    \midrule 
       distance & $\mathbf{\num{6.325e-2}}$                      & \num{7.588e-2}                        & \num{6.498e-2}                        & \num{6.316e-2} \\
       abs error $\pm$ std & $\mathbf{\num{2.618e-4} \pm \num{1.781e-3}}$ & \num{6.992e-4} $\pm$ \num{3.855e-3}   & \num{5.432e-4} $\pm$ \num{3.330e-3}   & -\\
       KL error & $\mathbf{\num{5.901e - 1}}$                     & \num{2.253}                           & \num{1.212} & -\\
       MS error & $\mathbf{\num{8.103e-3}}$                     & \num{3.837e-2}                        & \num{2.846e-2} & -\\
    \midrule
       distance & $\mathbf{\num{7.41e-3}}$ & \num{3.822e-2} & \num{8.0e-3} & \num{6.26e-3} \\
       abs error $\pm$ std & $\mathbf{\num{1.382e-4} \pm \num{6.767e-4}}$ & $\num{7.134e-4} \pm \num{2.448e-3}$ & $\num{4.827e-4} \pm \num{1.938e-3}$ & -\\
       KL error & \textbf{\num{3.594e - 1}} & \num{2.356e0} & \num{9.943e - 1} & -\\
       MS error & \textbf{\num{1.193e-3}} & \num{1.625e-2} & \num{9.968e-3} & -\\
    \bottomrule
\end{tabular}}
\caption{\label{tab:1}
Comparing the Rényi ($\RR_\alpha$), KL and Tsallis ($\TT_q$) 
regularized optimal transport plans 
to the unregularized transport plan
in terms of the absolute mean and standard deviation,
KL and mean squared error.
We use the (unscaled) squared distance matrix,
different from the ones used in~\cite{MNPN17} or \Cref{tab:2}.
\textit{Top row:} Gaussian distributed marginals
with $\alpha = 0.01, q = 1.6$ as good as possible;
\textit{Bottom row}: mixed Poisson-distributed marginals
with $\alpha = 0.01, q = 2$ as good as possible,
visualized in \Cref{fig:renyi/tsallis_ot_lam=10}.
In both cases $\eps = \num{e-1}$ and marginals in $\Sigma_{50}$.}
\end{table}

\begin{figure}[b!]
\resizebox{\linewidth}{!}{
\begin{tabular}{@{\hspace{1.4cm}} c @{\hspace{2.4cm}} c @{\hspace{2.1cm}} c @{\hspace{2.3cm}} c @{\hspace{1.2cm}} c}
    KL & R$_{\alpha}$OT & T$_{q}$OT & OT & |R$_{\alpha}$OT - OT| \\
    \multicolumn{5}{c}{\includegraphics[width=1.2\linewidth, clip=true, trim=0pt 10pt 0pt 10pt]{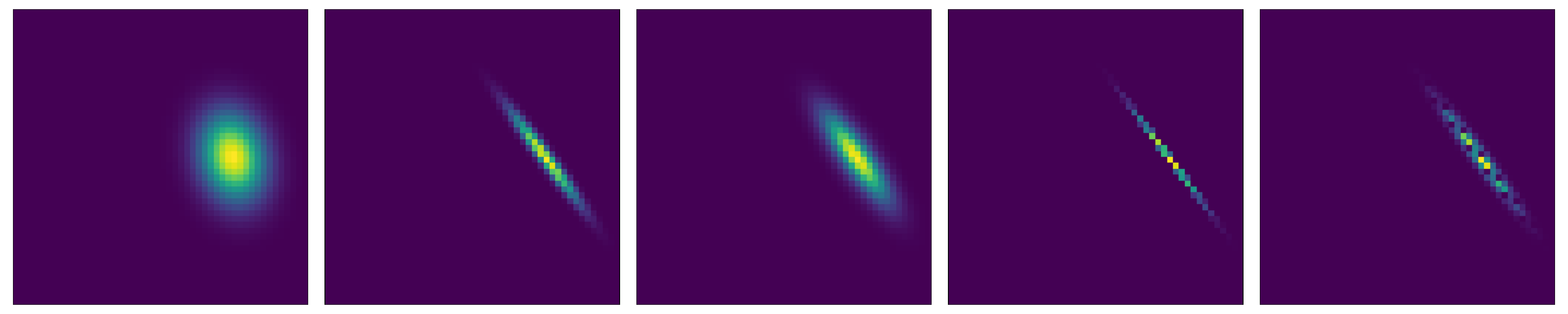}} \\[2pt]
    \multicolumn{5}{c}{\includegraphics[width=1.2\linewidth, clip=true, trim=0pt 10pt 0pt 10pt]{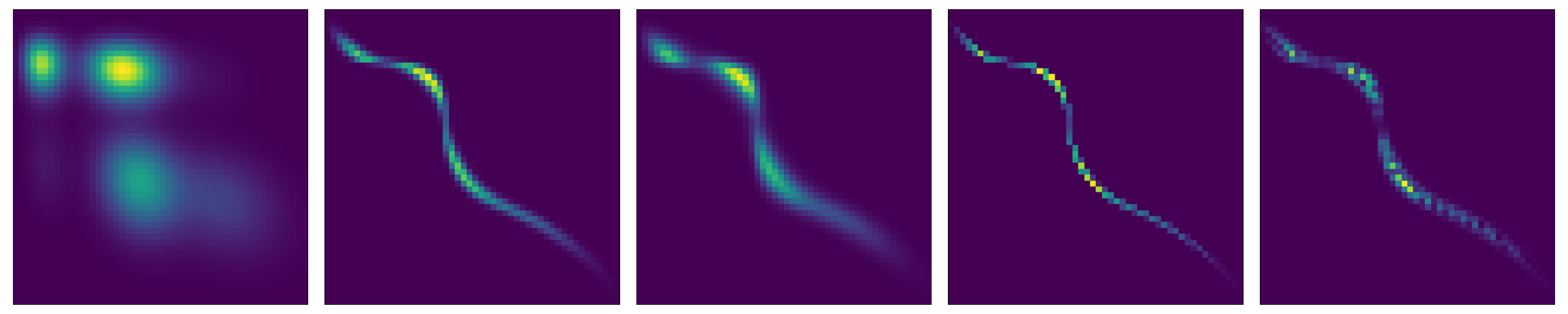}}
\end{tabular}}
\caption{
Transport plans belonging to two choices of marginals compared in \Cref{tab:1} 
and the absolute difference between the Rényi regularized 
and the unregularized plan (rightmost plot).
}
\label{fig:renyi/tsallis_ot_lam=10}
\end{figure}

We first numerically validate the convergence statements from \Cref{theorem:ConvToOTKL}.
In \Cref{fig:renyi_ot_comp_poisson_convergence} we observe significant improvements 
in comparison to the state of the art
KL regularized OT problem~\cite{C13}.
In \Cref{fig:renyi_ot_emd_comp_poisson_convergence}
a qualitative study is done 
through phase transitions 
on the difference of the optimal transport plans
between Rényi regularized 
and unregularized OT (left)
and between Rényi regularized 
and KL OT (right).
We observe a faster convergence in $\alpha$
than in $\varepsilon$ to the unregularized optimal transport plan (left),
and relatively large differences 
for the Rényi regularized and KL regularized optimal transport plans 
even for small regularization parameters.
For the comparison 
the setting from \Cref{fig:renyi_ot_comp_poisson_convergence}
is used.
Additionally, 
we compare Rényi and Tsallis regularization in \Cref{tab:1} and \Cref{tab:2}.

\begin{figure}[t!]
    \includegraphics[width=\linewidth, clip=true, trim=70pt 75pt 70pt 70pt]{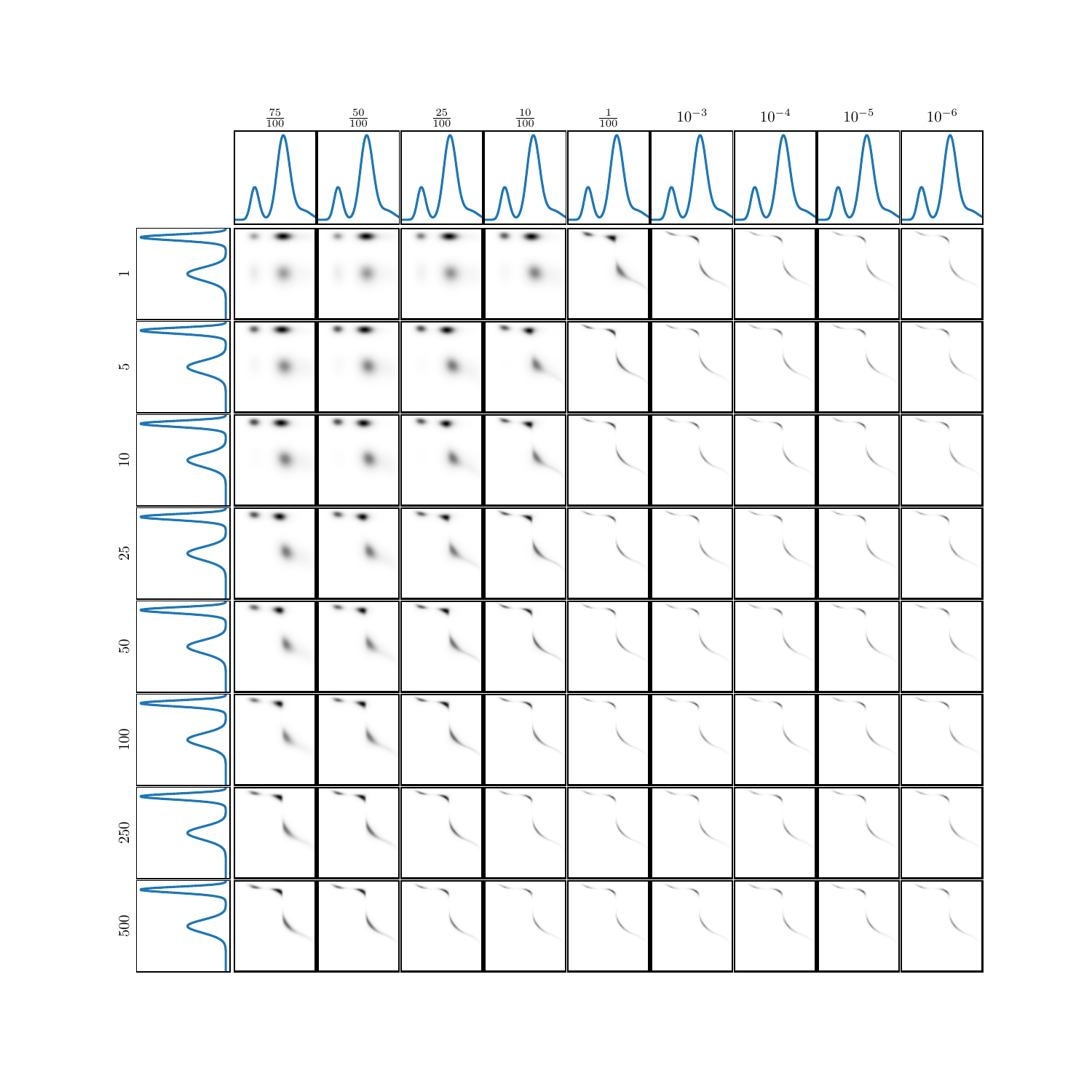}
    \vspace{-0.6cm}
    \caption{
    Summary of the convergence properties in $\alpha \to \{0,1\}$
    (columnwise)
    and $\eps^{-1} \to \{0, + \infty\}$ (rowwise) from \Cref{theorem:ConvToOTKL}.
    The unregularized OT plan is expected to be in the last columns
    and in the last row,
    where the trivial coupling is expected to be in the upper left corner
    (visualized by the open source code from~\cite{MNPN17}).}
    \label{fig:renyi_ot_comp_poisson_convergence}
\end{figure}

\begin{figure}[t!]
    \includegraphics[width=0.47\linewidth, clip=true, trim=35pt 25pt 47pt 40pt]{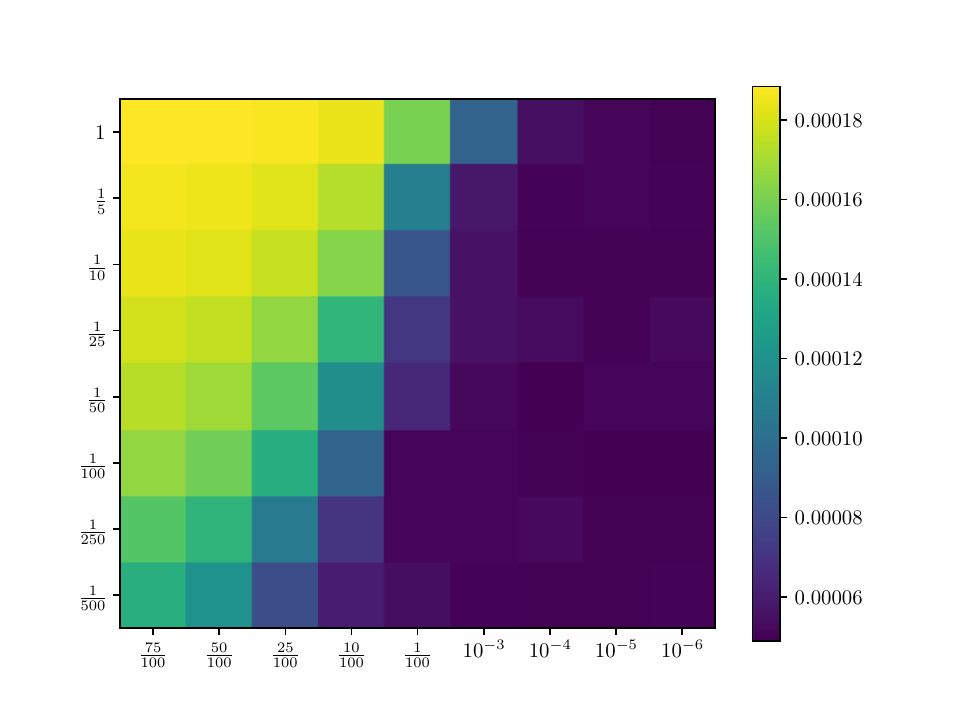}
    \hfill
    \includegraphics[width=0.47\linewidth, clip=true, trim=35pt 25pt 47pt 40pt]{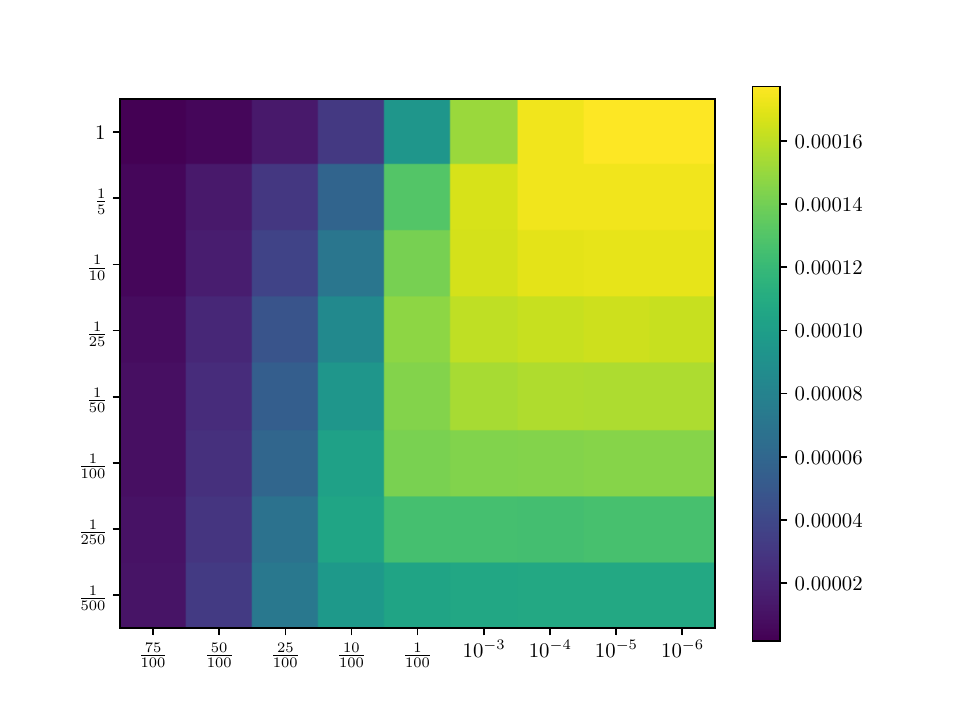}
    \caption{
    Absolute mean of the difference of the Rényi regularized transport plans to the unregularized OT plan (left) and to the KL regularized plans (right)
     with $\alpha$ on the $x$-axis (cf.~\Cref{fig:renyi_ot_comp_poisson_convergence})
     and 
     $\eps$ on the $y$-axis.}
    \label{fig:renyi_ot_emd_comp_poisson_convergence}
\end{figure}

In \Cref{tab:1}, 
respectively \Cref{fig:renyi/tsallis_ot_lam=10},
we use the (unscaled) squared Euclidean distance matrix $\MM$~\cite{FLA2021} for our OT-problem,
i.e. $m_{i,j} \coloneqq (\frac{i- j}{n - 1})^2$ for $0 \le i,j \le n - 1$, 
where $n$ is the length of the marginals.
However, we observe that the Tsallis-regularization from~\cite{MNPN17}
works better with the scaled version of $\MM$, 
i.e. $\eta_n \MM$, where $\eta_n \coloneqq \frac{(n - 1)^2}{n}$.
Our program is much more stable with the unscaled distance matrix; 
both problems have an objective in the form $\langle \MM, \cdot \rangle + \eps R(\cdot)$ 
and translate into each other as follows
\begin{align}
    \argmin_{\MP \in U(\Vr, \Vc)} 
    \quad 
    \langle \MM, \MP \rangle + \eps R(\MP) 
    & = \argmin_{\MP \in U(\Vr, \Vc)}
    \quad
    \langle \eta_d\MM, \MP \rangle + \eta_d \eps R(\MP) \\
    \text{ and }
    \qquad
    \argmin_{\MP \in U(\Vr, \Vc)} 
    \quad 
    \langle \eta_d\MM, \MP \rangle + \eps R(\MP) 
    & = \argmin_{\MP \in U(\Vr, \Vc)}
    \quad
    \langle \MM, \MP \rangle + \frac{\eps}{\eta_d}R(\MP),
\end{align}
where $R(\cdot)$ is a convex, proper, lower semicontinuous regularizer
and $\eps > 0$ the regularization parameter.
Given the scaled distance matrix,
we observe that the Tsallis regularized OT plan is not significantly tighter 
than the KL regularized OT plan,
whereas our regularization is much more tighter than both,
see \Cref{tab:2}. 
\begin{table}[t!]
\resizebox{\textwidth}{!}{
\begin{tabular}{c | c c c c}
    \toprule 
                                & Rényi regularized OT                          & Kullback-Leibler regularized OT       & Tsallis regularized OT                & OT \\
    \midrule 
       distance                 & \textbf{\num{3.03448}}                        & \num{3.07815}                         & \num{3.07420}                         & \num{3.03313} \\
       abs error $\pm$ std      & $\mathbf{\num{3.481e-4} \pm \num{2.496e-3}}$  & \num{4.596e-4} $\pm$ \num{3.038e-3}   & \num{4.596e-4} $\pm$ \num{3.038e-3}   & - \\
       KL error                 & \textbf{\num{7.059e-1}}                       & 9.199                                 & 9.199                                 & - \\
       MS error                 & \textbf{\num{1.588e-2}}                       & \num{2.360e-2}                        & \num{2.361e-2}                        & - \\
    \midrule
       distance                 & $\mathbf{0.32401}$                            & 0.34401                               & 0.34364                               & 0.30092 \\
       abs error $\pm$ std      & $\mathbf{\num{1.321e-4} \pm \num{6.453e-4}}$  & \num{4.069e-4} $\pm$ \num{1.759e-3}   & \num{4.070e-4} $\pm$ \num{1.759e-3}   & - \\
       KL error                 & $\mathbf{\num{1.640e-1}}$                     & \num{7.506e-1}                        & \num{7.507e-1}                        & - \\
       MS error                 & $\mathbf{\num{1.193e-3}}$                     & \num{1.625e-2}                        & \num{9.968e-3}                        & - \\
    \bottomrule
\end{tabular}}
\caption{\label{tab:2}
Comparing the Rényi ($\RR_\alpha$), KL and Tsallis ($\TT_q$) 
regularized optimal transport plans 
to the unregularized transport plan like in \Cref{tab:1}.
However, here we use the scaled, squared Euclidean distance matrix
as in~\cite{MNPN17,FLA2021}.
\textit{Top row}: Gaussian distributed marginals
with $\alpha = 0.25$ and $q = 1$;
\textit{Bottom row}: mixed Poisson-distributed marginals
with $\alpha = 0.25$ and $q = 1$ 
as good as possible.
In both cases $\eps = \num{e-1}$ and marginals in $\Sigma_{50}$.
}
\end{table}
%
%
Independent of the choice of the squared Euclidean distance matrix 
(scaled or unscaled),
we observe significant improvements of the regularization 
by the Rényi divergence 
instead of the Tsallis or KL~$=T_1 = \RR_1$ divergence, 
see \Cref{fig:renyi_ot_comp_poisson_I} for a qualitative comparison.

For a quantitative comparison concerning the convergence behavior,
we use the Poisson distributed marginals as in \Cref{tab:1} and \Cref{tab:2}, 
now in $\Sigma_{100}$ instead of $\Sigma_{50}$.
For fixed $\eps > 0$, we observe that the functional values of $\OT_{\eps, \alpha}$ convergence for $\alpha \to 0$.
Notably, choosing $\eps \ll 1$ additionally improves the convergence behavior 
of $\OT_{\eps, \alpha}$ in $\alpha$, 
and the unregularized OT value is already achieved for larger $\alpha \sim 10^{-3}$,
where $\alpha \sim \num{e-6}$ is necessary when choosing $\eps$ large,
we refer to \Cref{fig:comp_renyi_alpha}.
For a visualization of the corresponding optimal regularized transport plan,
we refer to \Cref{fig:renyi_ot_comp_poisson_convergence}.
Generally, we observe that the computation time decreases in $\alpha \in (0,1)$,
for fixed $\eps > 0$.
We claim, 
that choosing $\eps$ large, which is not a suitable choice 
for the KL regularized OT,
supports the computational expense of the Sinkhorn projection, 
and affects this behavior.
Notably,
we can also decrease $\eps$ for fixed $\alpha$,
observing the convergence to the unregularized OT
without any computational difficulties;
however, the computation time is now significantly higher,
cf.~\Cref{tab:runtimes}.
We highline 
that the computation 
for small $\alpha$ and $\varepsilon$
might be higher,
but leads us to sharper transport plans.
Additionally,
choosing $\alpha$ small 
and $\varepsilon$ large
enables similar transport plans,
better than those with small $\varepsilon$
and $\alpha$'s close to one
(i.e. close to KL regularized OT),
and especially with a smaller computation time,
cf~\Cref{tab:runtimes} (bold entry).
The second row and fourth column 
corresponds to \Cref{fig:renyi_ot_emd_comp_poisson_convergence}.
Choosing $\eps$ in the same range 
for the stabilized OT~\cite{S19}
leads to numerical instabilities,
while our algorithms still converge for these choices.

\begin{table}[t!]
   \begin{tabular}{c|c c c c c}
   \toprule
    \backslashbox{$\eps$}{$\alpha$} & \num{9.99e-1} & \num{9.9e-1}    & \num{9e-1} & \num{1e-6} & \num{1e-7} \\
    \midrule
    \num{1e1}         & <0.1 & <0.1 & 3 & \textbf{30} & \textbf{30} \\
    \num{1e0}         & <0.1 & <1 & 4 & 31 & 32 \\
    \num{1e-1}        & <0.1 & <1 & 5 & 34 & 35 \\
    \num{1e-6}        & 50 & 40 & 40 & 38 & 40  \\
    \num{1e-7}        & 66 & 48 & 55 & 57 & 60 \\
    \bottomrule
    \end{tabular}
    \caption{Rounded run times in seconds, of R$_\alpha$OT for different values of $\alpha \in (0, 1)$ and $\eps > 0$.}
    \label{tab:runtimes}
\end{table}

\begin{figure}[t!]
    \includegraphics[width=.49\linewidth, clip=true, trim=40pt 20pt 65pt 40pt]{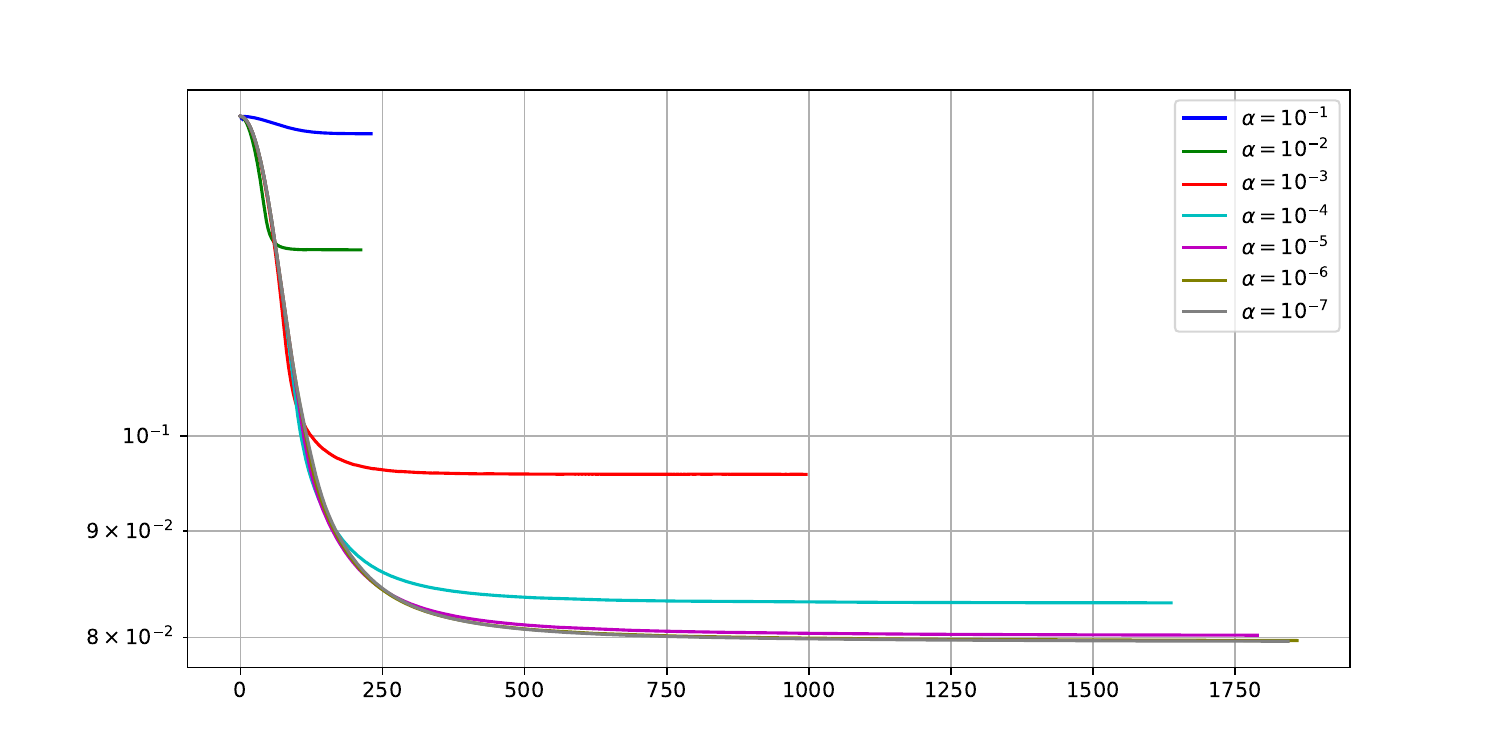}
    \includegraphics[width=.49\linewidth, clip=true, trim=40pt 20pt 65pt 40pt]{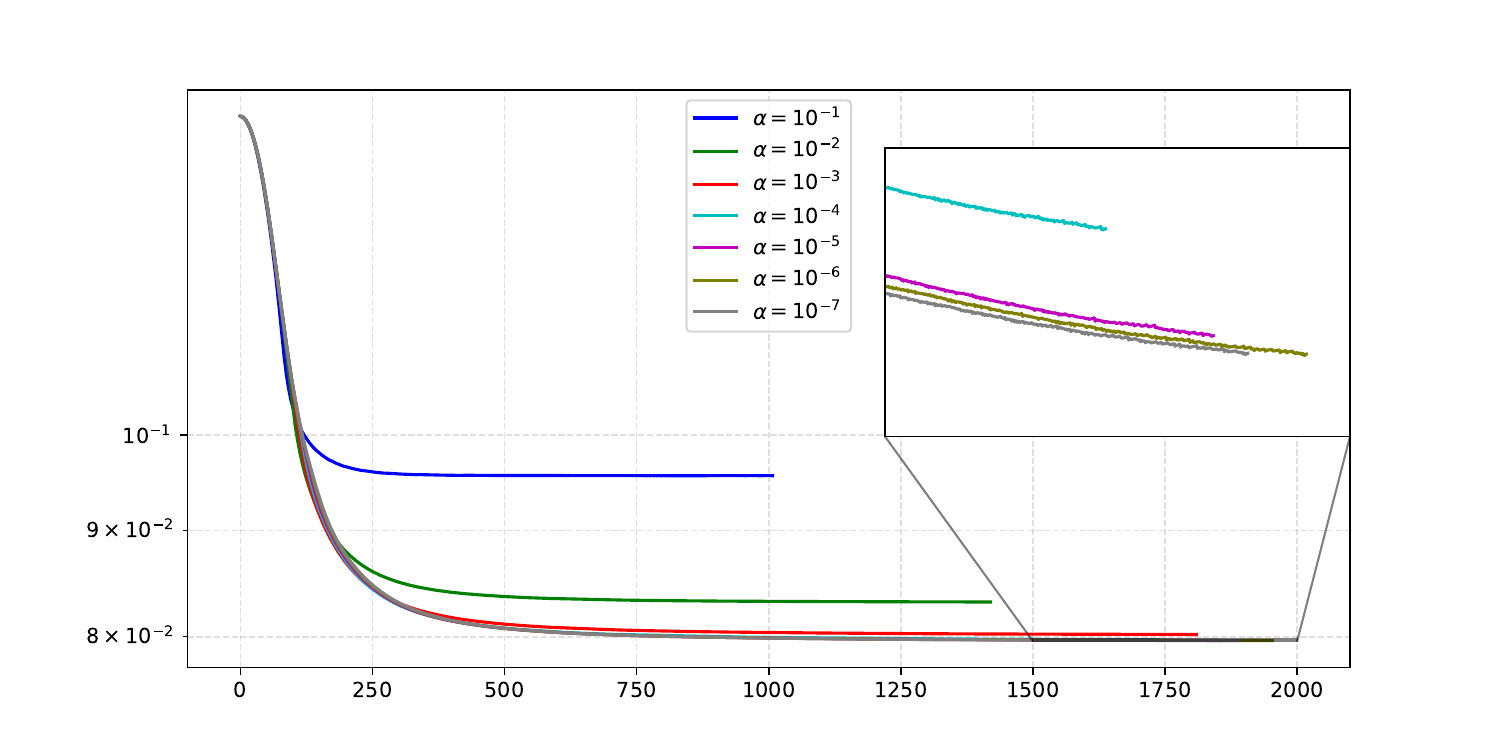}
    \caption{
    Convergence behavior of $\OT_{\eps, \alpha}(\mu,\nu)$:
    \textit{Left:} for fixed $\eps = \num{e1}$;
    \textit{Right:} for fixed $\eps = \num{e-1}$;
    using the mixed Poisson distributed marginals 
    as in \Cref{tab:1} and \Cref{tab:2}, here in $\Sigma_{100}$.
    The subplot is valued between $\num{7.95e-2}$ and $\num{7.97e-2}$.
    }
    \label{fig:comp_renyi_alpha}
\end{figure}

\begin{figure}[t]
    \begin{minipage}{0.5\linewidth}
    \resizebox{\linewidth}{!}{%
    \begin{tabular}{c c @{\hspace{87pt}} c @{\hspace{57pt}} c @{\hspace{57pt}} c @{\hspace{37pt}} c}
        &  & $1$ & $10$ & $25$ & $50$  \\
        & \multicolumn{5}{l}{\multirow{5}{*}{\includegraphics[width=1.75\textwidth, clip=true, trim=72pt 55pt 55pt 65pt]{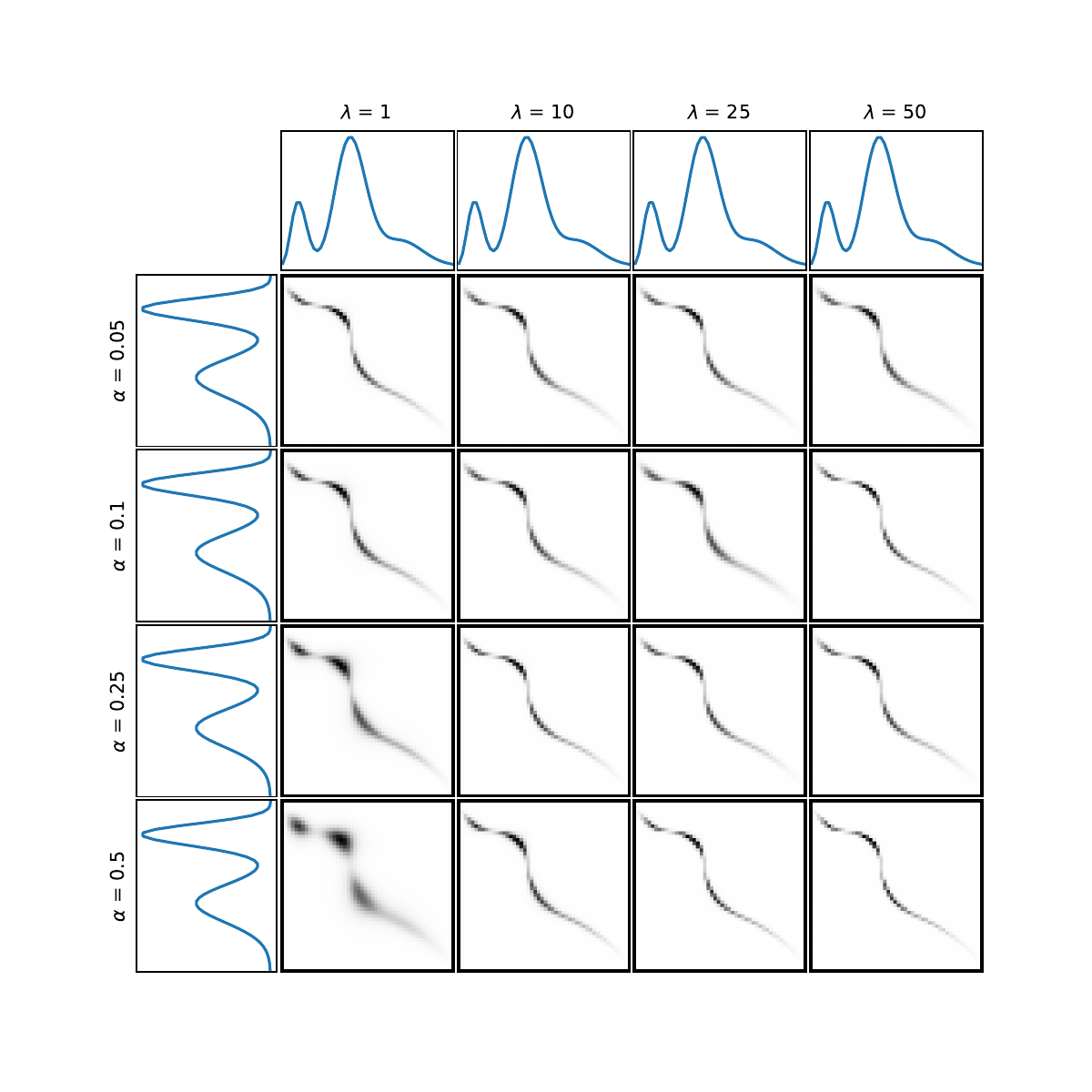}}} \\[70pt]
        \rotatebox[origin=c]{90}{$0.05$} \\[50pt]
        \rotatebox[origin=c]{90}{$0.1$} \\[50pt]
        \rotatebox[origin=c]{90}{$0.25$} \\[55pt]
        \rotatebox[origin=c]{90}{$0.5$} \\[30pt]
    \end{tabular}}
    \end{minipage}%
    \hfill
    \begin{minipage}{0.5\linewidth}
    \resizebox{\linewidth}{!}{%
    \begin{tabular}{c c @{\hspace{87pt}} c @{\hspace{57pt}} c @{\hspace{57pt}} c @{\hspace{37pt}} c}
        &  & $1$ & $10$ & $25$ & $50$ \\
        & \multicolumn{5}{l}{\multirow{5}{*}{\includegraphics[width=1.75\textwidth, clip=true, trim=72pt 55pt 55pt 68pt]{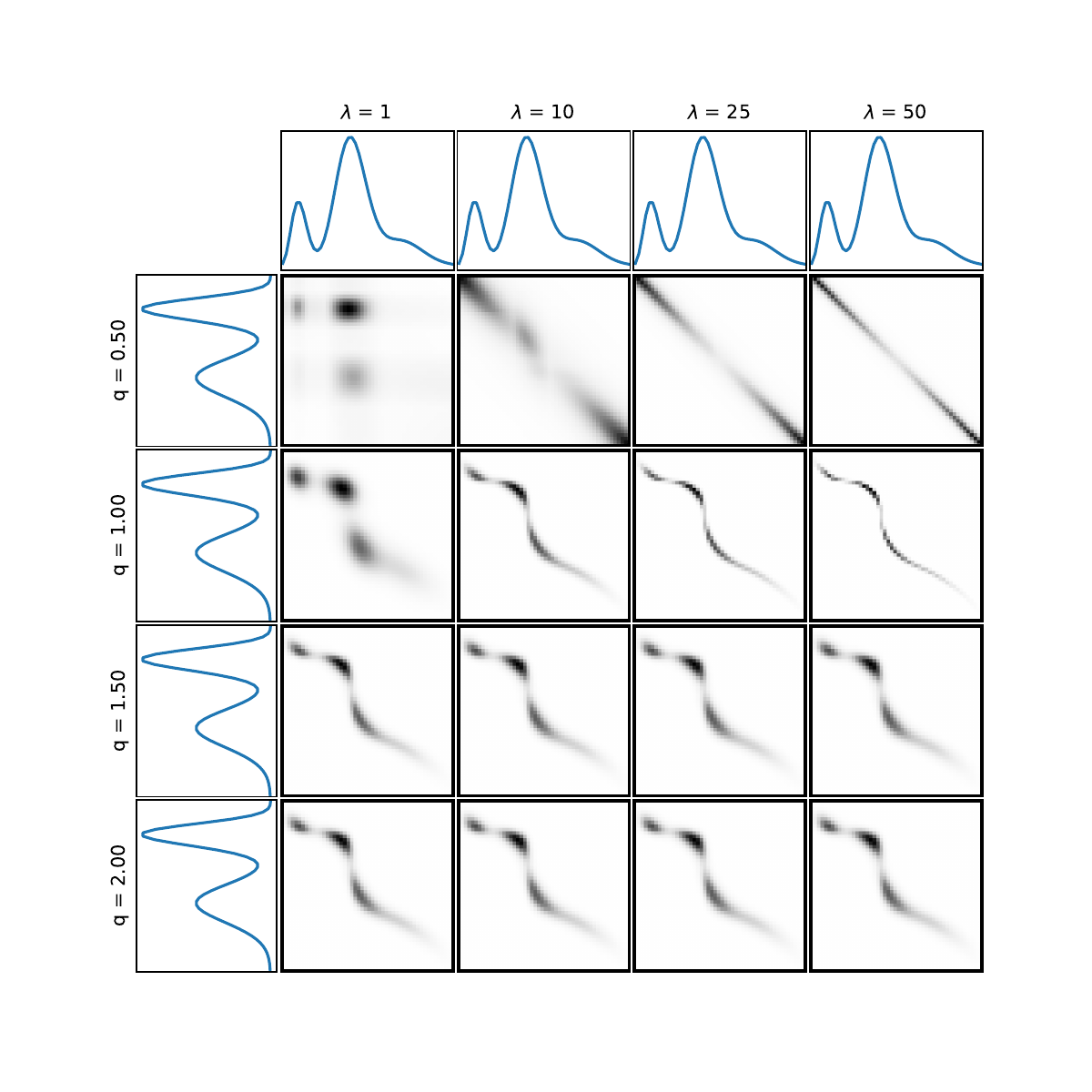}}} \\[70pt]
        \rotatebox[origin=c]{90}{$0.5$} \\[50pt]
        \rotatebox[origin=c]{90}{$1.0$} \\[50pt]
        \rotatebox[origin=c]{90}{$1.5$} \\[55pt]
        \rotatebox[origin=c]{90}{$2.0$} \\[30pt]
    \end{tabular}}
    \end{minipage} \\
    \caption{Similar to~\cite[Fig.~2]{MNPN17},
    we compare the Rényi and Tsallis regularized OT plans 
    for mixed Poisson marginals
    where the distance matrix $\MM$ is the squared Euclidean scaled by a factor of $\frac{(n - 1)^2}{n}$,
    and $n$ is the length of the marginals.
    In both cases,
    the regularization parameter,
    here $\varepsilon^{-1}$,
    is reported on the $x$-axis.
    The choice of $\alpha$ is reported on the $y$-axis (left hand side).
    We copy the choice of $q$ from~\cite[Fig.~2]{MNPN17} 
    ($y$-axis on the right hand side)
    and remark that for $q$ closer to zero, 
    the Tsallis regularized plan deteriorate further.
    }
    \label{fig:renyi_ot_comp_poisson_I}
\end{figure}

\begin{remark}[Simpler step sizes]
    Numerically, we also observe convergence for fixed 
     - and importantly constant in $k$ - step sizes,
    reducing the computational expense due to not having to compute the Polyak step size in each iteration.
    Theoretically,
    there is no guarantee for the convergence
    since the objective is not Lipschitz continuous in zero.
    Heuristically,
    since we stop the algorithm after finitely many iterations
    and since we can observe that the gradient is bounded and not out of memory,
    the convergence can be still observed,
    not only in a neighbourhood of the actual minimizer.
\end{remark}

\subsection{\texorpdfstring{R$_\alpha$}{R-alpha}OT for Measures with Disjoint Support}

Optimal transport also allows the comparison of measures 
with disjoint support, 
whereas such a comparison is not possible
using $f$-divergences, for instance.

We show that the Rényi-regularized OT allows
us to compare measures with disjoint support 
and contrast properties the corresponding transport plans.
Therefore,
we use two empirical measures%
---the first was also studied in \cite{FSVATP19}
in terms for optimal transport flows.
For applying Algorithm~\ref{alg:MD},
we have to take care of the dimensions: due to using KL projections,
the marginals have to lie in the same probability simplex.
Therefore, 
we add zeros and perturb those values with a small error
to avoid numerical instabilities.

\begin{figure}[t]
    \resizebox{\linewidth}{!}{%
    \begin{tabular}{c c c c}
        unregularized OT 
        & KL-unregularized OT
        & \multicolumn{2}{c}{Rényi-unregularized OT}\\
        \includegraphics[width=0.34\linewidth,clip=true, trim=10pt 10pt 40pt 10pt]{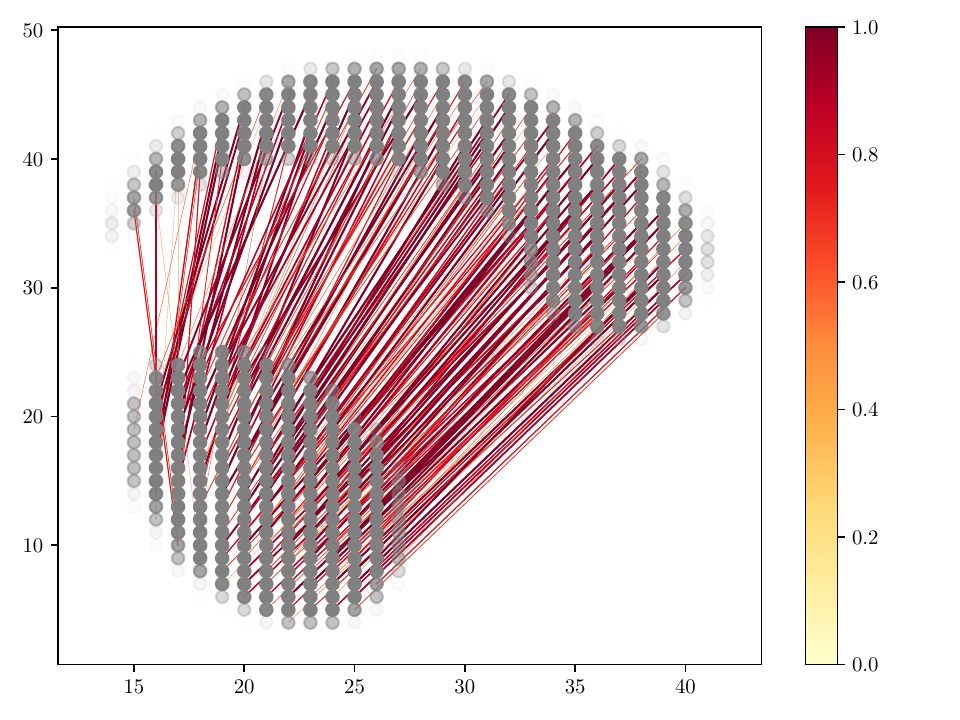}
        &\includegraphics[width=0.34\linewidth,clip=true, trim=10pt 10pt 40pt 10pt]{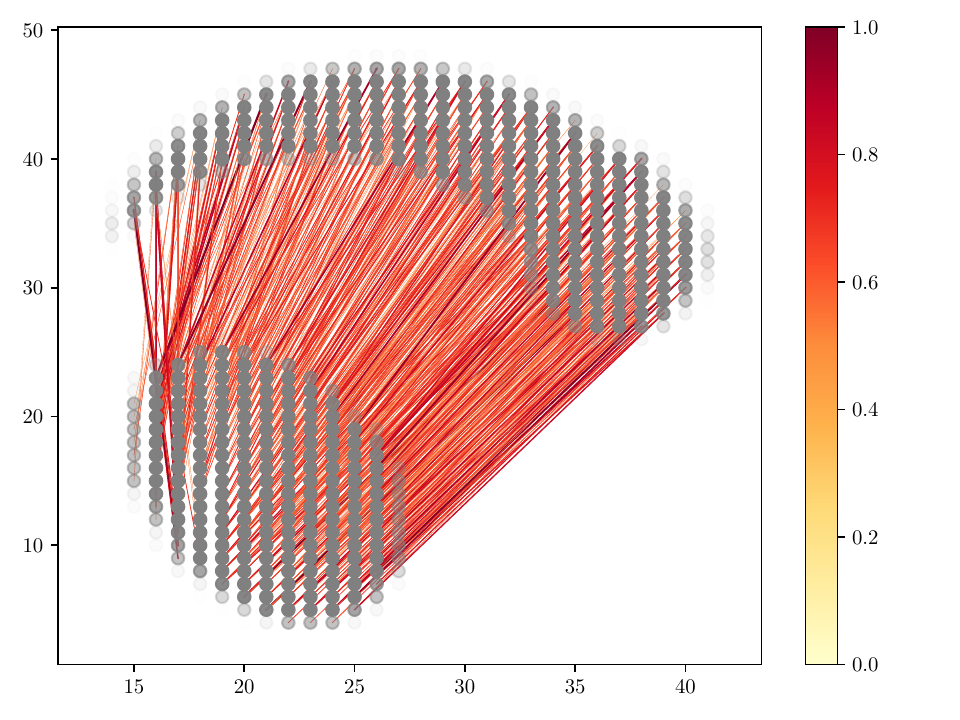}
        &\includegraphics[width=0.34\linewidth,clip=true, trim=10pt 10pt 40pt 10pt]{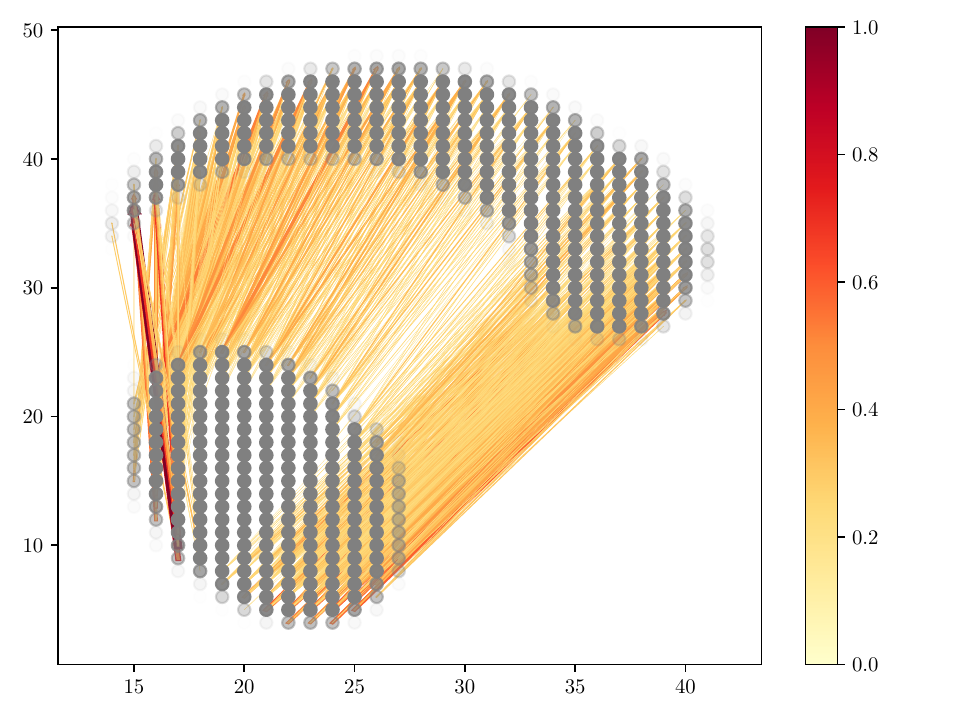}
        &\includegraphics[width=0.34\linewidth,clip=true, trim=10pt 10pt 40pt 10pt]{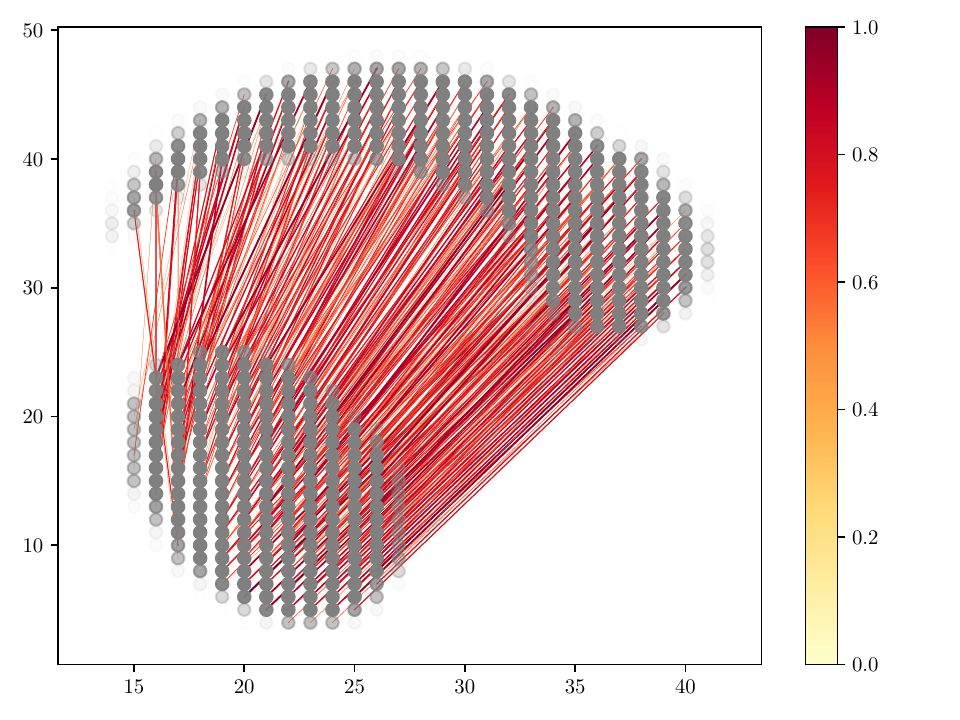} \\
        \includegraphics[width=0.34\linewidth,clip=true, trim=10pt 10pt 40pt 0pt]{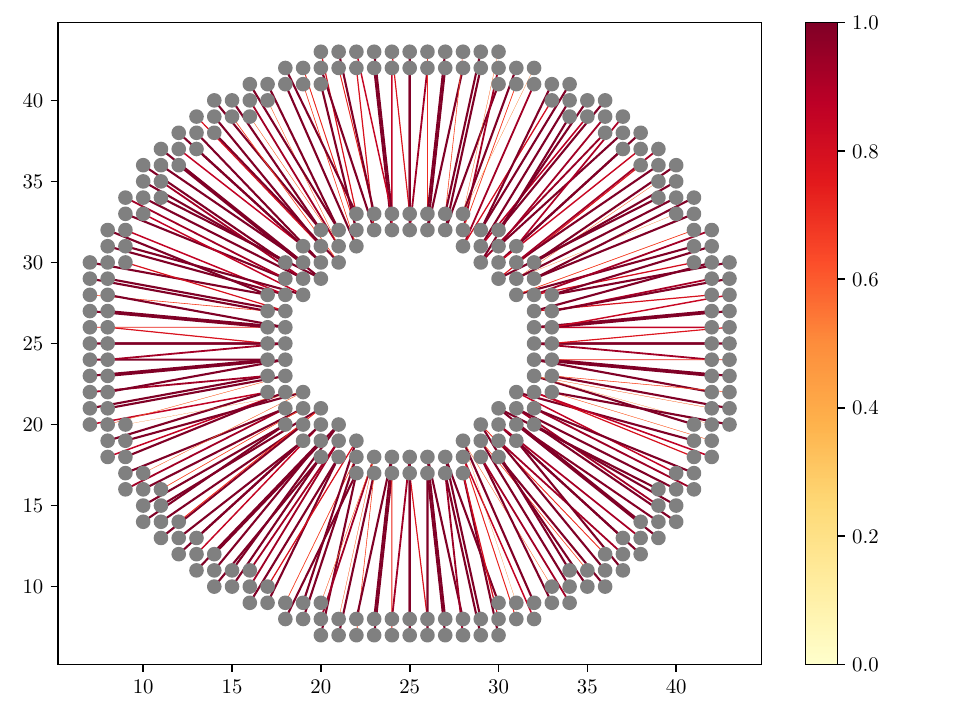}
        &\includegraphics[width=0.34\linewidth,clip=true, trim=10pt 10pt 40pt 0pt]{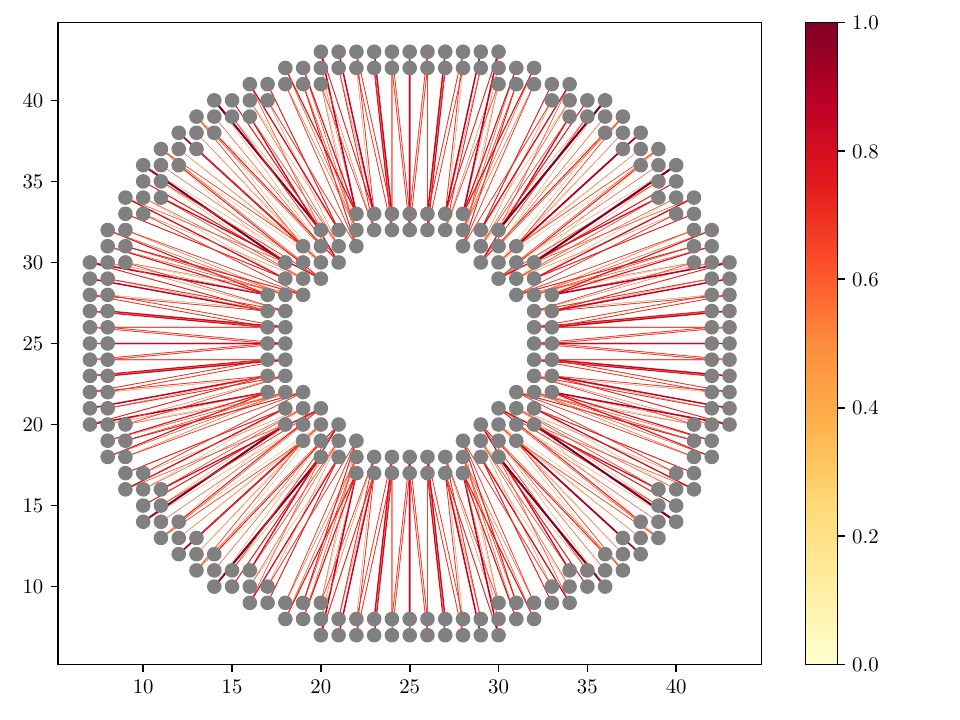}
        &\includegraphics[width=0.34\linewidth,clip=true, trim=10pt 10pt 40pt 0pt]{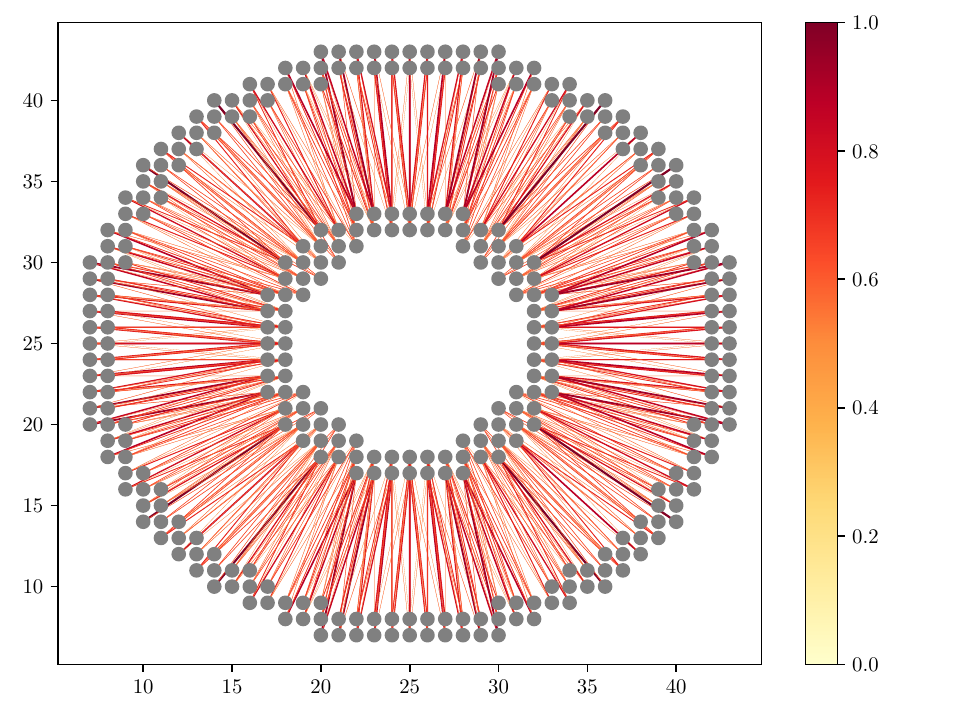}
        &\includegraphics[width=0.34\linewidth,clip=true, trim=10pt 10pt 40pt 0pt]{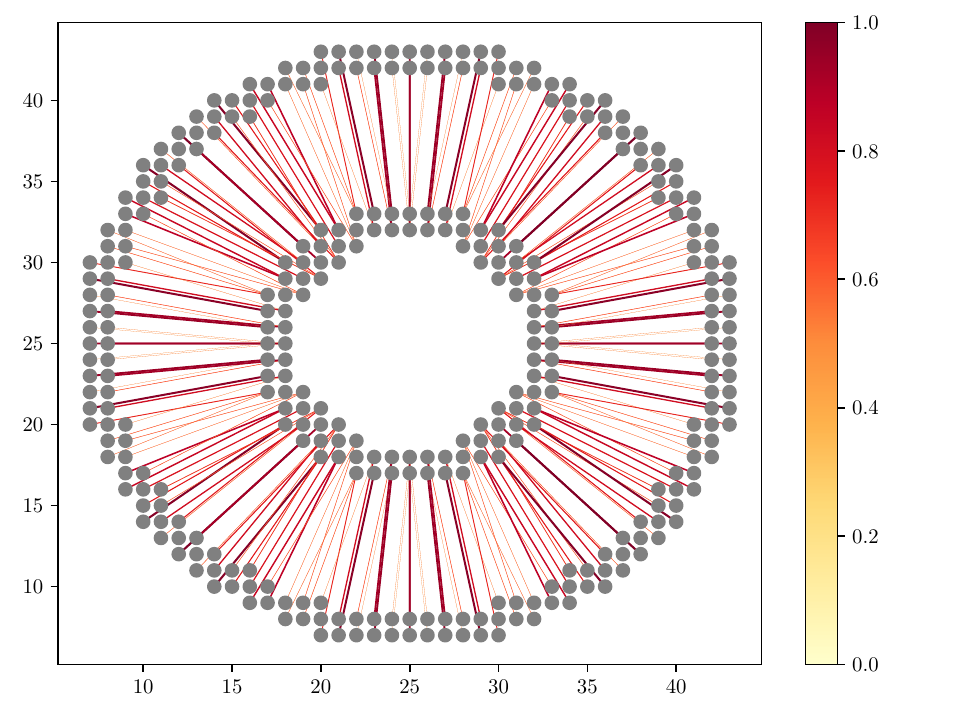}
    \end{tabular}}
    \caption{
    Comparison of the transport maps (from left to right):
    Unregularized, KL regularized OT
    with $\varepsilon=5\cdot 10^{-4}$,
    and R$_\alpha$ regularized OT 
    with $\varepsilon=1$ and $\alpha \in \{10^{-2}, 10^{-7}\}$.
    Only the trajectories with masses in the top 50\% are visualized 
    (rescaled to lie in $[0,1]$; for better discernibility of the differences).}
    \label{fig:2d-disjoint-measures}
\end{figure}

First, we notice 
that the computation is again not straightforward for the KL regularized OT problem for smaller regularization values.
Second,
the transport plan for (relatively) large regularization parameters
puts more weight on those trajectories with larger distances
(cf.~\Cref{fig:2d-disjoint-measures}, third---top and bottom).
This can be significantly reduced
by using smaller $\alpha$ parameters for the Rényi divergence
(cf.~Fig.~\ref{fig:2d-disjoint-measures}, fourth---top and bottom).
Those are quite close to the unregularized optimal transport plan
(cf.~Fig.~\ref{fig:2d-disjoint-measures}, first---top and bottom),
but allows splitting of mass over the support.

\subsection{Predicting Voter Migration with Rényi Regularized OT}

We want to use our approach to predict changed voter decisions, similar to the experiment in~\cite{MNPN17}.
Since the election of the Berlin State Parliament in September of 2021 had to be repeated in February of 2023,
it is interesting to compare both election results, because of the short interim.
In general, 
the elections are held only every five years.

We will focus on the six parties that achieved a result 
larger than $3\%$ in both elections,
and collect the others in 
"Others"
and NV (non-voters).
In total, we end up with eight parties.

The marginals are the publicly available results of both elections\footnotemark[3],
whereas the cost matrix is a priori unknown.
To construct a suitable cost matrix we will use the results 
of the "Federal Agency for Civic Education"~\footnotemark[1], which compared the parties according to their national party platform and gave similarity scores between all parties in percent, yielding for each party a vector $\mu_{\text{party}} \in [0,1]^8$ of these scores.
For the similarity score of a party to "Others"
we chose the mean of all other parties.
The similarity to "NV" is defined as 1,
since there is a $100\%$ match to any party
with the reason that the political interests of non-voters are unknown.
With these definitions, we assume that each voter's decision is 
rational and based on the political position of each party.
Using $\mu_{\text{party}}$, we define the distance matrices 
$\MM = (m_{i,j})_{1\le i,j\le 8} \in \{\MM^{\text{Eucl}}, \MM^{\text{sqEucl}}, \MM^{\text{Riesz}}, \MM^{\text{RBF}}, \MM^{\text{Res}}\}$, 
whose $(i, j)$-th entries are 
$\phi(\| \mu_{\text{party}_i} - \mu_{\text{party}_j} \|_2)$, 
where 
\begin{equation}
\label{eq:DistMatDis}
    \begin{gathered}
        \phi^{\text{Eucl}}(r) \coloneqq r, \qquad
        \phi^{\text{sqEucl}}(r) \coloneqq r^2, \qquad
        \phi^{\text{Riesz}}(r) \coloneqq \sqrt{2 r}, \\
        \phi^{\text{RBF}}(r) \coloneqq \sqrt{2(1 - e^{- \gamma r^2})}, \qquad
        \phi^{\text{Res}}(r) \coloneqq \sqrt{2\left(\frac{1}{\gamma} - \frac{1}{\sqrt{\gamma^2 + r^2}} \right)},
    \end{gathered}
\end{equation}
and $\gamma > 0$.

We expect the OT plan to be a diagonally dominant matrix
because most voters will have not changed their decision between both elections, 
see \Cref{fig:renyi_ot_vote}.
For different matrices,
we observe that the Rényi regularized plans 
outperform all the other ones (KL, Tsallis and unregularized OT),
see \Cref{tab:3}.

\begin{figure}[t!]
    \includegraphics[width=\linewidth, clip=true, trim=10pt 25pt 15pt 20pt]{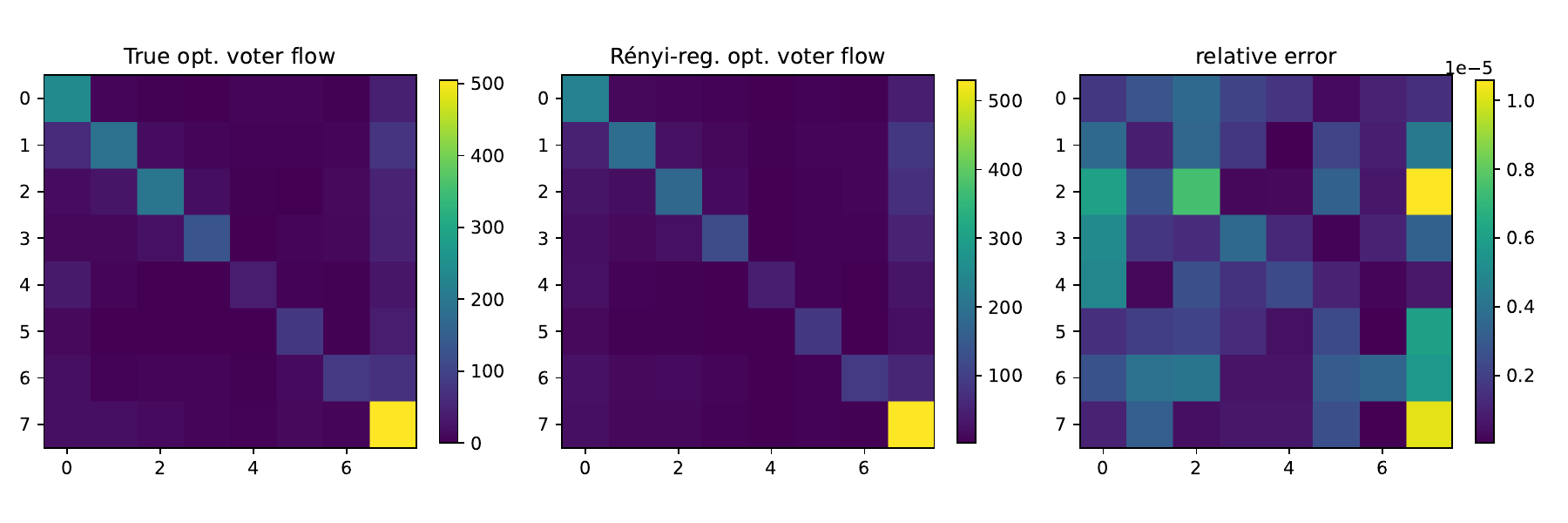}
    \includegraphics[width=\linewidth, clip=true, trim=10pt 25pt 15pt 20pt]{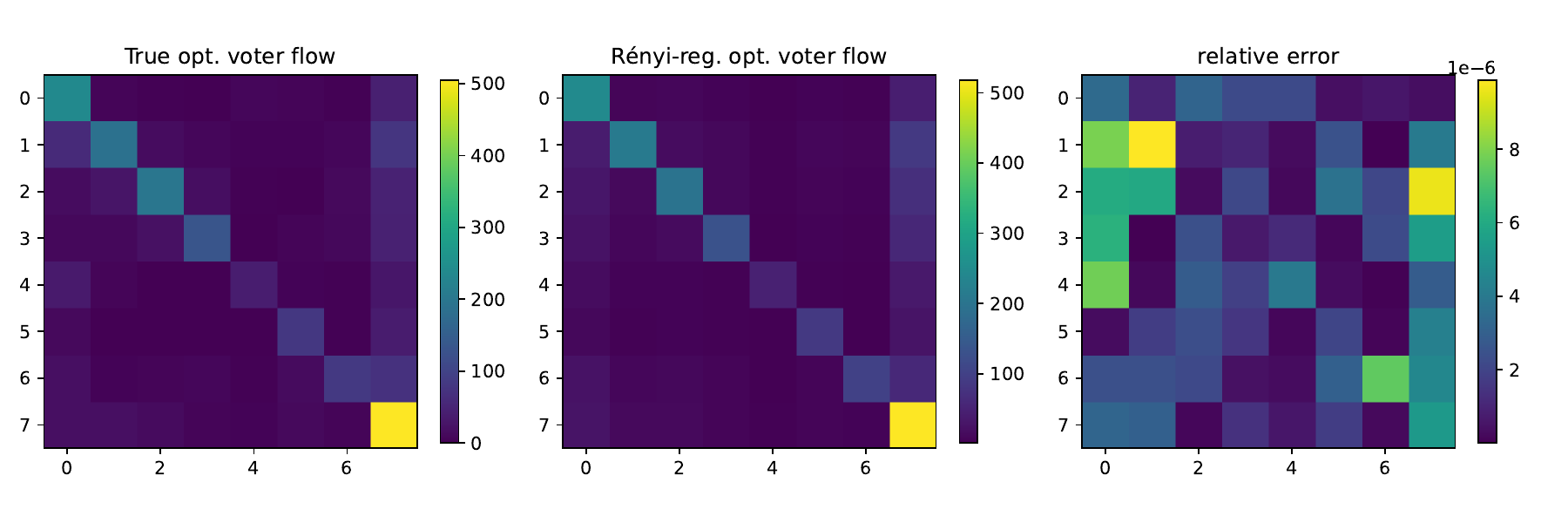}
    \includegraphics[width=\linewidth, clip=true, trim=10pt 25pt 15pt 20pt]{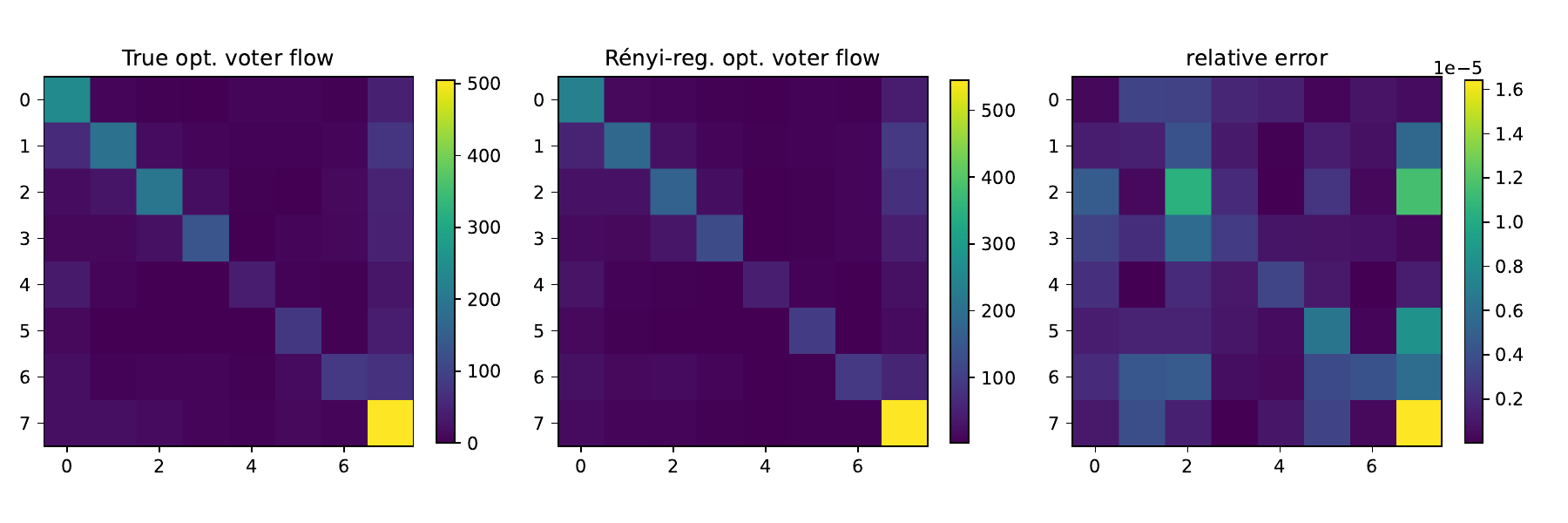}
    \includegraphics[width=\linewidth, clip=true, trim=10pt 25pt 15pt 20pt]{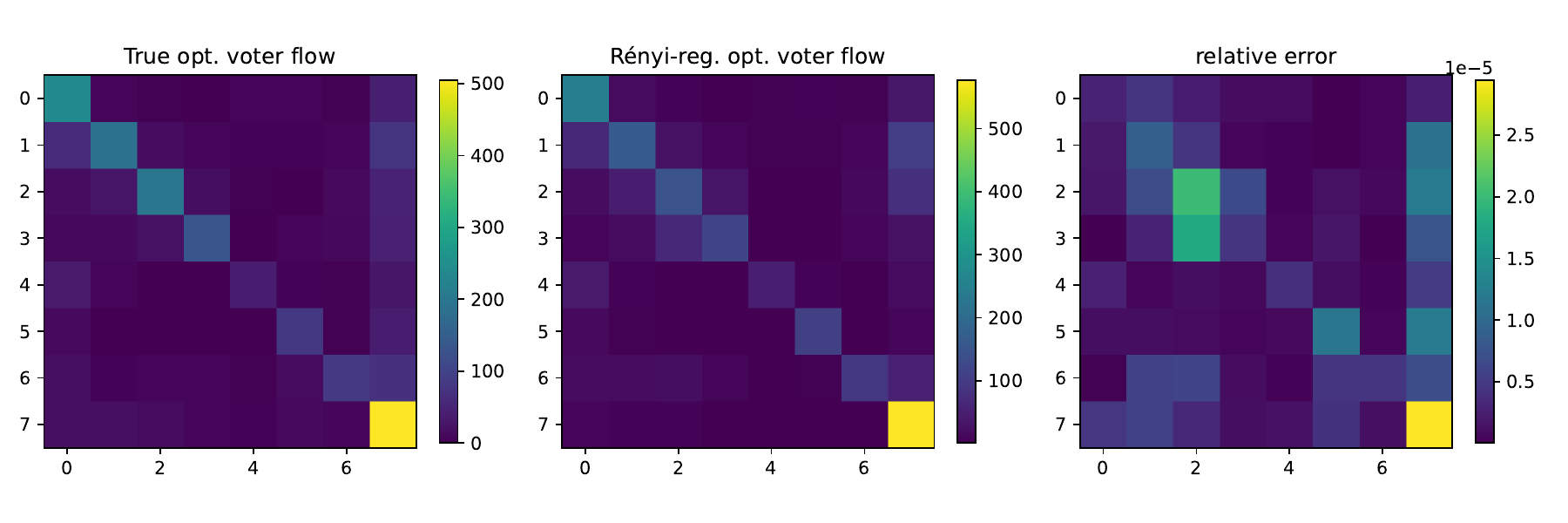}
    \vspace{-0.6cm}
    \caption{
    The true voter migration (left) evaluated by "Infratest dimap"\protect\footnotemark[2]. 
    The Rényi regularized OT plan $\pi_c^{\alpha, \eps}(\mu, \nu)$ with $\eps = 1$.
    $\mu, \nu$ are the Berlin elections results in 2021 and 2023 
    (source: "Federal Statistical Office"\protect\footnotemark[3]).
    Here, $\MM = \MM^{\text{sqEucl}}$ (bottom), with $\alpha = 0.2$
    $\MM = \MM^{\text{Eucl}}$ (middle bottom), with $\alpha = 0.3$,
    $\MM = \MM^{\text{RBF}}$ (middle top), with $\alpha = 0.3$
    and $\MM = \MM^{\text{Riesz}}$ (top), with $\alpha = 0.5$.
    }
    \label{fig:renyi_ot_vote}
\end{figure}

\footnotetext[1]{\url{https://interaktiv.morgenpost.de/parteien-bundestagswahl-2017/}}
\footnotetext[2]{\url{https://interaktiv.tagesspiegel.de/lab/waehlerwanderung-abgeordnetenhauswahl-berlin-2023/}}
\footnotetext[3]{\url{https://www.wahlen-berlin.de/wahlen/BE2021/AFSPRAES/ergebnisse.html} and \url{https://www.wahlen-berlin.de/wahlen/BE2023/AFSPRAES/agh/index.html}}

\begin{table}[t!]
\resizebox{\textwidth}{!}{
\begin{tabular}{c | c | c c c c}
    \toprule 
        distance matrix \eqref{eq:DistMatDis}         & regularizer                          & abs error $\pm$ std                       & KL error                        & mean squared error \\
        \midrule 
        $\MM^{\text{Riesz}}$                                  & $\KL$                        & \num{1.881e1} $\pm$ \num{2.049e1}         & \num{5.685e2}                   & \num{4.949e4} \\
                                                              & $\TT_{q = 1.2}$              & \num{8.627} $\pm$ \num{8.844}             & \num{2.481e2}                   & \num{9.769e3} \\
                                                              & $\OT$                        & \num{1.845e1} $\pm$ \num{2.358e1}         & \num{7.655e2}                   & \num{5.738e4} \\
        see \Cref{fig:renyi_ot_vote} (top)                    & $\text{R}_{\alpha = 0.5}$    & $\mathbf{\num{5.906} \pm \num{5.408}}$    & \textbf{\num{1.899e2}}          & \textbf{\num{4.104e3}}& \\
        \midrule 
        $\MM^{\text{RBF}}_{\gamma = 1}$                       & $\KL_{\eps = 1}$             & \num{2.297e1} $\pm$ \num{2.632e1}         & \num{7.526e2}                   & \num{7.813e4} \\
                                                              & $\TT_{q = 1.4}$              & \num{8.747} $\pm$ \num{1.488e1}           & \num{2.786e2}                   & \num{1.908e4} \\
                                                              & $\OT$                        & \num{1.845e1} $\pm$ \num{2.341e1}         & \num{7.555e2}                   & \num{5.738e4} \\
        see \Cref{fig:renyi_ot_vote} (top middle)             & $\text{R}_{\alpha = 0.3}$    & $\mathbf{\num{6.252} \pm \num{7.752}}$  & \textbf{\num{1.876e2}}          & \textbf{\num{6.348e3}}& \\
        \midrule 
        $\MM^{\text{Eucl}}$                                   & $\KL_{\eps = 1}$             & \num{2.4221e1} $\pm$ \num{2.848e1}        & \num{8.422e2}                   & \num{9.008e4} \\
        scalar: 10                                            & $\TT_{q = 10}^\dag$          & \num{9.409} $\pm$ \num{1.529e1}           & \num{3.173e2}                   & \num{2.063e4} \\
                                                              & $\OT$                        & \num{1.845e1} $\pm$ \num{2.358e1}         & \num{7.655e2}                   & \num{5.738e4} \\
        see \Cref{fig:renyi_ot_vote} (bottom middle)          & $\text{R}_{\alpha = 0.3}$    & $\mathbf{6.611 \pm 7.868}$                & \textbf{\num{2.128e2}}          & \textbf{\num{6.759e3}} \\
        \midrule 
        $\MM^{\text{Res}}_{\gamma = 2}$                       & $\KL_{\eps = 1}$             & $\num{3.223e1} \pm \num{4.172e1}$         & \num{1.276e3}                   & \num{1.779e5} \\
        scalar: 5                                             & $\TT_{q = 0.7}^\dag$         & $\num{8.187} \pm \num{1.110e1}$           & \num{2.501e2}                   & \num{1.218e4} \\
                                                              & $\OT$                        & $\num{1.845e1} \pm \num{2.358e1}$         & \num{7.655e2}                   & \num{5.738e4} \\
                                                              & $\text{R}_{\alpha = 0.1}$    & $\mathbf{\num{5.703} \pm \num{6.215}}$    & \textbf{\num{1.880e2}}          & \textbf{\num{4.553e3}}&\\
        \midrule 
        $\MM^{\text{sqEucl}}$                                 & $\KL_{\eps = 1}$             & $\num{2.476e1} \pm \num{2.906e1}$         & \num{9.154e2}                   & \num{9.329e4} \\
                                                              & $\TT_{q = 1.7}$              & $\num{1.407e1} \pm \num{1.844e1}$         & \num{5.388e2}                   & \num{3.444e3} \\
                                                              & $\OT$                        & \num{1.920e1}  $\pm$ \num{2.432e1}        & \num{8.554e2}                   & \num{6.146e4} \\
        see \Cref{fig:renyi_ot_vote} (bottom)                 & $\text{R}_{\alpha = 0.2}$    & $\mathbf{\num{9.732} \pm \num{1.295e1}}$  & \textbf{\num{2.979e2}}          & \textbf{\num{1.679e2}}& \\
    \bottomrule
\end{tabular}}
\caption{\label{tab:3}
Comparing the Rényi ($\RR_\alpha$), KL and Tsallis ($\TT_q$) 
regularized optimal transport plans 
to the unregularized transport plan like in \Cref{tab:1}.
We choose $\eps = 1$ and for each regularizer the value of $\alpha \in \{k\cdot10^{-1} : 1\le k \le 9\}$ of $q \in \{k\cdot10^{-1} : k \in \N\}$ with the best result.
The $\dag$ denotes that we used a distance matrix $\MM$ scaled by the scalar factor,
and modified the regularization parameter accordingly.}
\end{table}

\section{Conclusion} \label{sec:Conclusions}

In this paper we have introduced a method to regularize the OT problem
using the Rényi divergence of order $\alpha \in (0, 1)$, 
which yields a premetric on probability measures. 
We have derived the dual problem of the regularized version and showed strong duality.
In contrast to the well-known KL regularized OT problem,
we end up with hard constraints for the potentials in the dual formulation.
Through the dual formulation,
we can represent the regularized optimal plan 
using the dual potentials,
which are unique almost everywhere up to constants.
Finally,
we have shown the convergence behaviors of the Rényi regularized OT 
to the unregularized OT whenever $\alpha \to 0$ or $\eps \to 0$.
Moreover,
the convergence to the KL regularized OT shows up as a special case
in the convergence with respect to $\alpha$.
Numerically,
we have verified the convergence behaviors and obtained tight transport plans.
In the real-world example of predicting voter migration 
we have observed the superiority of the Rényi regularized OT plans
in comparison to other transport plans.

Directions for further work include investigating the convergence speed of $\OT_{\eps, \alpha}(\mu, \nu)$ 
for $\eps~\to~0$ (which is exponential for $\alpha = 1$).
Furthermore, 
the analogue of the Sinkhorn divergence 
for debiasing the Rényi regularized OT can be defined.
A similar proof as in~\cite{FSVATP19} might be suitable---a 
crucial point might be proving the Gateaux differentiability of $\OT_{\eps, \alpha}$,
where we need to prove the (uniform) convergence of the direct sum of the 
dual potentials. 
Such a Rényi-Sinkhorn divergence would interpolate between OT,
MMD and the Sinkhorn divergence as a direct conclusion of \Cref{theorem:ConvToOTKL}
generalizing the interpolation property of the Sinkhorn divergence.

\textbf{Acknowledgements}\quad
First and foremost the authors would like to thank their advisor Gabriele Steidl 
for her guidance. 
Furthermore, 
they thank the anonymous reviewer for their detailed comments which greatly improved the manuscript.
Lastly, they 
they acknowledge many fruitful discussions with Christian Wald.
V.S. gratefully acknowledge funding from the BMBF project 
"VI-Screen" with number 13N15754. 

\textbf{Statements and Declaration}\quad
The authors have no conflicts to disclose.
All authors contributed equally.
They read and approved the final manuscript.

\textbf{Data availability}\quad
All data is available as open access as disclosed.


\begin{appendices}

In this appendix we collects additional proofs and details as well as two interesting 
results cached up during our research.
After proving some simple results from the main text
we give a generalized statement of the (strict) convexity
of the mixture of (strictly) log-convex functions.
The proof is like in~\cite[Prop.~16.D.4]{MOA11}, 
but valid for a more general setting.
In the next section,
an extension of \Cref{theorem:ConvConjRenyiDiv} is given on a non-compact set.
Last but not least,
a simple subgradient method to solve the dual problem is given.
However, 
in contrast to KL regularized OT, 
there is no apparent relationship between the primal and the dual algorithm 
and our dual algorithm 
converges much slower than the primal algorithm 
in terms of convergence speed, presented in the main text.

\section{Numerical implementation details}

The code is available on GitHub at 
\url{https://github.com/JJEWBresch/renyi_regularized_OT}.
The algorithm from \cref{sec:algo} 
is implemented
in Python~3.11.4 using Numpy~1.25.0, Scipy~1.11.1 and POT~0.9.3.
The experiments are performed on an off-the-shelf MacBookPro 2020
with Intel Core i5 Chip (4‑Core CPU, 1.3~GHz) and 8~GB RAM.
For the mirror descent algorithm,
we used a variant of the Sinkhorn projection 
which is implemented and used in~\cite{MNPN17}.
In all experiments,
the Sinkhorn projection is performed until the residuum 
of the marginals in the 2-norm is smaller than $10^{-4}$.
The proposed mirror descent (\Cref{alg:MD}) terminates 
whenever the residuum of the iterates in the Euclidean norm is smaller than $\num{e-6}$.

\section{Additional proofs}
\label{sec:app_add_proofs}

We now prove \Cref{lemma:restricted_polytope_compact}.
\begin{proof}
    The Rényi divergence $\RR_\alpha$ is jointly convex by \Cref{prop:RenyiDivProps}, 
    hence $\RR_\alpha(\,\cdot \mid \mu\otimes\nu)$ is convex 
    and thus so is $\Pi_\gamma^\alpha(\mu,\nu)$.
    
    Since $\RR_\alpha(\;\cdot\mid \mu\otimes\nu)$ is weakly lower semicontinuous by \Cref{prop:RenyiDivProps}, 
    the sublevel set $\{ \pi \in \P(X \times X): \RR_{\alpha}(\pi \mid \mu \otimes \nu) \le \gamma \}$ is (weakly) closed.
    Since $\Pi(\mu, \nu)$ is weakly compact by~\cite[Thm.~4.1]{V08}, so is $\Pi_{\alpha}^{\gamma}(\mu, \nu)$.
\end{proof}

Now, we prove \Cref{theorem:RenyiDistance}.
\begin{proof}
    We verify the three axioms of a metric.
    \begin{enumerate}
        \item
        Since $d_{c, \gamma, \alpha}(\mu, \nu)\ge 0$, we have $\tilde d_{c, \gamma, \alpha}(\mu, \nu)\ge 0$.
        If $\mu = \nu$, then $\tilde{d}_{c, \gamma, \alpha}(\mu, \nu) = 0$ by definition.
        Let $\tilde d_{c, \gamma, \alpha}(\mu, \nu) = 0$. 
        Then either $\mu = \nu$ or $d_{c, \gamma, \alpha}(\mu, \nu) = 0$. 
        In the latter case, we also have $\mu = \nu$, by the same argument as for the Wasserstein-$p$ distance, see \cite[p.~94]{V08}.
        
        \item
        Consider the transposition map $m \colon X \times X \to X \times X$, $(x, y) \mapsto (y, x)$.
        Since $c$ is symmetric, we have $\langle c, m_{\#} \pi \rangle = \langle c \circ m, \pi \rangle = \langle c, \pi \rangle$ and
        \begin{equation} \label{eq:RenyiSymm}
            \RR_\alpha(m_{\#}\pi \mid \nu\otimes\mu)
            = \RR_\alpha(\pi \mid \mu\otimes\nu)
            \le \gamma. 
        \end{equation}
        Since $m_{\#}(m_{\#} \pi) = (m \circ m)_{\#} \pi = \pi$, \eqref{eq:RenyiSymm} implies $\pi \in \Pi_{\gamma}^{\alpha}(\mu, \nu)$ if and only if $m_{\#}\pi \in \Pi_{\gamma}^{\alpha}(\nu,\mu)$.
        Hence
        \begin{align*}
            d_{c, \gamma, \alpha}(\mu, \nu)^p
            = \min_{\pi \in \Pi_{\gamma}^{\alpha}(\mu, \nu)} \langle c, \pi \rangle
            & = \min_{\pi \in \Pi_{\gamma}^{\alpha}(\nu, \mu)} \langle c, m_{\#} \pi \rangle \\
            & = \min_{\pi \in \Pi_{\gamma}^{\alpha}(\nu, \mu)} \langle c, \pi \rangle
            = d_{c, \gamma, \alpha}(\nu, \mu)^p.
        \end{align*}

        \item 
        We consider the three marginals $\mu, \xi, \nu \in \P_p(X)$.
        Let $\pi_{12} \in \Pi_\gamma^\alpha(\mu,\xi)$ and $\pi_{23} \in \Pi_\gamma^\alpha(\xi,\nu)$
        be the respective solutions of \eqref{eq:Renyi-Sinkhorn_metric}, 
        i.e., $d_{c,\gamma,\alpha}(\mu,\xi)^p = \langle c, \pi_{12} \rangle$ and 
        $d_{c,\gamma,\alpha}(\xi,\nu)^p = \langle c, \pi_{23} \rangle$.

        Thanks to the disintegration theorem for measures on Polish spaces~\cite[pp.~78-80]{DM82}
        there exists families $((\pi_1)_{y}\big)_{y \in X}, ((\pi_2)_{y})_{y \in X} \subset \P(X)$ both uniquely determined $\xi$-almost everywhere,
        such that for all $A, B, C \in \B$ we have
        \begin{equation*}
            \pi_{12}(A \times B) = \int_B (\pi_1)_{y}(A) \diff{\xi}(y) 
            \quad\text{and}\quad
            \pi_{23}(B \times C) = \int_{B} (\pi_2)_{y}(C) \diff{\xi}(y).
        \end{equation*}
        Thanks to the Gluing lemma~\cite[pp.~11-12]{V08}, 
        there exists a $\pi_{123} \in \P(X^3)$ with 
        \begin{equation} \label{eq:t_gluing}
            \pi_{123}(A \times B \times C)
            = \int_{B} (\pi_1)_{y}(A) (\pi_2)_{y}(C) \diff{\xi}(y)
        \end{equation}
        for all $A, B, C \in \B$ and 
        \begin{align} \label{eq:GluingLemma}
            P_{\#}^{(1,2)}\pi_{123} = \pi_{12} 
            \quad\text{and}\quad
            P_{\#}^{(2,3)}\pi_{123} = \pi_{23}.
        \end{align}
        Then, $\pi_{13} \coloneqq P_{\#}^{(1,3)}\pi_{123}$ fulfills $\pi_{13} \in \Pi(\mu,\nu)$. 
        We now show that $\pi_{13} \in \Pi_{\gamma}^{\alpha}(\mu, \nu)$ as well.

        The Rényi divergence satisfies the following chain rule:
        \begin{align*}
            R_{\alpha}(\pi_{123} \mid \mu \otimes \pi_{23})
            & = \frac{1}{\alpha - 1}\ln\left(\int_{X^3} \left( \frac{\diff \pi_{123}}{\diff (\mu \otimes \pi_{23})}(x, y, z)\right)^{\alpha} \diff{\mu}(x) \diff{\pi_{23}}(y, z) \right) \\
            & = \frac{1}{\alpha - 1}\ln\left(\int_{X^3} \left( \frac{\diff \pi_{12}}{\diff (\mu \otimes \xi)}(x, y)\right)^{\alpha} \diff{\mu}(x) \diff{\pi_{23}}(y, z) \right) \\
            & = R_{\alpha}(\pi_{12} \mid \mu \otimes \xi).
        \end{align*}
        Since $(P^{(1, 3)})_{\#} (\mu \otimes \pi_{2 3}) = \mu \otimes \nu$, 
        the data processing inequality \cref{prop:RenyiDivProps}.5 with the deterministic Markov kernel $\kappa_{P^{(1,3)}}$ yields
        \begin{align*}
            \RR_\alpha(\pi_{13} \mid \mu\otimes{ \nu})
            & = R_{\alpha}\left(\kappa_{P^{(1,3)}} \circ \pi_{123} \mid \kappa_{P^{(1,3)}} \circ (\mu \otimes \pi_{23}) \right) \\
            & \le R_{\alpha}(\pi_{123} \mid \mu \otimes \pi_{23})
            = R_{\alpha}(\pi_{12} \mid \mu \otimes \xi)
            \le \gamma,
        \end{align*}
        such that $\pi_{13} \in \Pi_\gamma^\alpha(\mu, \nu)$.

        Using that $\pi_{13} \in \Pi_\gamma^\alpha(\mu,\nu)$ might be not an optimal solution for the Rényi-regularized OT problem between $\mu$ and $\nu$, 
        the triangle inequality for $c \in \D_p$,
        the Minkowski inequality (for all $p \geq 1$),
        and that the marginals of $\pi_{123}$ are known from \eqref{eq:GluingLemma}, we obtain
        \begin{align*}
            d_{c,\gamma,\alpha}(\mu,\nu)
            & \le \left(\int_{X^2} d(x,z)^p \diff \pi_{13}(x,z)\right)^{\frac{1}{p}} 
            {= \left(\int_{X^3} d(x,z)^p \diff \pi_{123}(x,y,z)\right)^{\frac{1}{p}}} \\
            & \le \left(\int_{X^2} \left( d(x,y) + d(y,z) \right)^p\diff \pi_{123}(x,z)\right)^{\frac{1}{p}} \\
            & \le \left(\int_{X^3} d(x,y)^p \diff \pi_{13}(x,y,z) \right)^{\frac{1}{p}}
            + \left(\int_{X^3} d(y,z)^p\diff \pi_{123}(x,y,z) \right)^{\frac{1}{p}}\\
            & = \left( \int_{X^2} d(x,y)^p\diff {\pi_{12}}(x,y) \right)^{\frac{1}{p}}
            + \left( \int_{X^2} d(y,z)^p\diff {\pi_{23}}(y,z) \right)^{\frac{1}{p}} \\
            & = d_{c,\gamma,\alpha}(\mu,\xi) + d_{c,\gamma, \alpha}(\xi,\nu).
        \end{align*}
    \end{enumerate}
\end{proof}

We show the following generalization of the weak continuity statement from \Cref{prop:RenyiDivProps}.

\begin{lemma}[Joint one-sided weak lower semicontinuity of $\RR_{\alpha}$] \label{lem:JointlyContRenyiDiv}
    Let
    $\nu \in \P(X)$.
    Moreover, let $(\alpha_n)_{n \in \N} \subset (0,1)$ with $\alpha_n \nearrow \alpha \in (0,1]$
    and let $(\mu_n)_{n \in \N} \subset \P(X)$ converge weakly
    to $\mu$ for $n \to \infty$.
    Then,
    \begin{equation*}
        \liminf_{n \to \infty} \RR_{\alpha_n}(\mu_n \mid \nu) 
       \ge \RR_\alpha(\mu \mid \nu).
    \end{equation*}
\end{lemma}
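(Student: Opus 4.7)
The plan is to exploit two properties from \Cref{prop:RenyiDivProps} together: the monotonicity of $\alpha \mapsto \RR_\alpha(\cdot \mid \nu)$ and the weak lower semicontinuity of $\RR_\beta(\cdot \mid \nu)$ for each fixed $\beta \in (0, 1]$. The key observation is that for any auxiliary parameter $\beta \in (0, \alpha)$, eventually we have $\alpha_n \ge \beta$, which lets us replace the moving order $\alpha_n$ with the fixed order $\beta$ at the cost of a lower bound.

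More precisely, I would fix an arbitrary $\beta \in (0, \alpha)$ and pick $N \in \N$ so large that $\alpha_n \ge \beta$ for all $n \ge N$; such $N$ exists because $\alpha_n \nearrow \alpha > \beta$. By the monotonicity of $\RR_{(\cdot)}(\mu_n \mid \nu)$ in the order stated in \Cref{prop:RenyiDivProps}.2, this yields $\RR_{\alpha_n}(\mu_n \mid \nu) \ge \RR_\beta(\mu_n \mid \nu)$ for all $n \ge N$. Taking the $\liminf_{n \to \infty}$ on both sides and using the weak lower semicontinuity of $\RR_\beta(\,\cdot \mid \nu)$ from \Cref{prop:RenyiDivProps}.4 together with $\mu_n \rightharpoonup \mu$ gives
\begin{equation*}
    \liminf_{n \to \infty} \RR_{\alpha_n}(\mu_n \mid \nu)
    \ge \liminf_{n \to \infty} \RR_\beta(\mu_n \mid \nu)
    \ge \RR_\beta(\mu \mid \nu).
\end{equation*}

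Finally, I would let $\beta \nearrow \alpha$. Since \Cref{prop:RenyiDivProps}.2 also provides continuity (and monotonicity) of $\beta \mapsto \RR_\beta(\mu \mid \nu)$ on $[0, 1]$, the right-hand side converges to $\RR_\alpha(\mu \mid \nu)$, yielding the claim. Note that the argument covers both the case $\alpha \in (0, 1)$ and the boundary case $\alpha = 1$, and that $\nu$ enters only as a fixed reference measure, so no continuity hypothesis on $\nu$ is needed.

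The only slightly delicate point is that the statement restricts to sequences $\alpha_n \nearrow \alpha$ (rather than arbitrary convergence), and this monotonicity from below is precisely what the proof uses: it guarantees an eventual lower bound $\alpha_n \ge \beta$ for every $\beta < \alpha$, which is what makes the replacement by a fixed-order functional legitimate. No substantial technical obstacle arises; the proof is essentially a two-line sandwich between the monotonicity in $\alpha$ and weak lower semicontinuity at a fixed order, completed by the continuity of $\alpha \mapsto \RR_\alpha(\mu \mid \nu)$.
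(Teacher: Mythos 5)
Your proposal is correct and follows essentially the same route as the paper's own proof: a sandwich combining monotonicity of $\RR_{(\cdot)}(\mu_n \mid \nu)$ in the order, weak lower semicontinuity at a fixed order, and continuity of the order map $\beta \mapsto \RR_\beta(\mu \mid \nu)$ to pass to the limit. The only cosmetic difference is that you lower-bound by an arbitrary fixed $\beta < \alpha$ and then let $\beta \nearrow \alpha$, whereas the paper uses the sequence members $\alpha_m$ themselves as the fixed orders before sending $m \to \infty$.
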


\begin{proof}
    Without loss of generality and renaming we can select an increasing subsequence $(\alpha_n)_{n \in \N} \subset (0, \alpha]$.
    By the monotonicity of the Rényi divergence with respect to $\alpha$
    from \Cref{lem:RenyiMonotone} we have for all $m \in \N$ 
    \begin{equation*}
        \liminf_{n \to \infty} \RR_{\alpha_n}(\mu_n \mid \nu)
       \ge \liminf_{n \to \infty} \RR_{\alpha_m}(\mu_n \mid \nu)
        = \RR_{\alpha_m}(\mu \mid \nu),
    \end{equation*}
    where the last equality holds by the weak lower semicontinuity of the Rényi divergence.
    Now, we can take the limit $m \to \infty$ and by the continuity 
    of the Rényi divergence in its order $\alpha$ (see \Cref{prop:RenyiDivProps})
    we obtain the assertion.
\end{proof}

\section{Log-Convex functions}

In the following, let $E$ be a convex subset of a vector space.

\begin{definition}[Log-convex function]
    A function $f \colon E \to (0, \infty)$ is 
    (strictly) log-convex if the composition $\ln \circ f \colon E \to \R$ is (strictly) convex.
\end{definition}

A function $f$ is (strictly) log-convex if and only if 
$f(t x + (1 - t) y) \overset{(<)}{\le} f(x)^t f(y)^{1 - t}$ 
for all $x, y \in E$ and all $t \in (0, 1)$.

\begin{proposition}[Mixture of log-convex functions is log-convex] \label{prop:LogConvexMixture}
    Let $(Y, \Sigma, \theta)$ be a measure space.
    Consider a measurable function $\Phi \colon E \times Y \to (0, \infty)$.
    If $\Phi(\cdot, y) \colon E \to (0, \infty)$ is 
    (strictly) log-convex for every $y \in Y$, 
    then the mixture
    \begin{equation*}
        M \colon E \to \R, \qquad
        h \mapsto \int_{Y} \Phi(h, y) \diff{\theta}(y)
    \end{equation*}
    is (strictly) log-convex, too.
\end{proposition}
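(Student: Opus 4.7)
The plan is to reduce the claim to Hölder's inequality via the pointwise log-convexity assumption. Fix $h_1, h_2 \in E$ and $t \in (0,1)$, and set $h_t \coloneqq t h_1 + (1-t) h_2 \in E$. By the hypothesis that $\Phi(\cdot, y)$ is log-convex for every $y \in Y$, we have the pointwise inequality
\begin{equation*}
    \Phi(h_t, y) \le \Phi(h_1, y)^{t} \, \Phi(h_2, y)^{1-t}, \qquad y \in Y.
\end{equation*}
Integrating both sides against $\theta$ (everything is measurable and strictly positive), I would then apply Hölder's inequality with conjugate exponents $p = 1/t$ and $q = 1/(1-t)$ to the right-hand side:
\begin{equation*}
    \int_Y \Phi(h_1, y)^{t} \Phi(h_2, y)^{1-t} \diff\theta(y)
    \le \left(\int_Y \Phi(h_1, y) \diff\theta(y)\right)^{t} \left(\int_Y \Phi(h_2, y) \diff\theta(y)\right)^{1-t}.
\end{equation*}
Chaining the two inequalities gives $M(h_t) \le M(h_1)^{t} M(h_2)^{1-t}$; taking logarithms yields convexity of $\ln \circ M$, i.e.\ log-convexity of $M$.

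For the strict version I would exploit that the \emph{first} inequality is already strict pointwise whenever $h_1 \ne h_2$, by strict log-convexity of $\Phi(\cdot, y)$. As long as $\theta$ is not the zero measure (which I would assume implicitly, since otherwise $M \equiv 0$ and the claim is vacuous or false depending on convention), integration preserves strict inequality: $M(h_t) < \int_Y \Phi(h_1,y)^t \Phi(h_2,y)^{1-t} \diff\theta(y)$. Combining this strict step with the (possibly non-strict) Hölder bound still yields $M(h_t) < M(h_1)^t M(h_2)^{1-t}$, so $M$ is strictly log-convex.

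The main obstacle I anticipate is essentially bookkeeping rather than a genuine mathematical difficulty: one must ensure that $\Phi(h_i, \cdot)$ is measurable (given from the joint measurability assumption) and that the integrals appearing in Hölder's inequality are not both infinite in a way that makes the bound vacuous. If $M(h_1) = \infty$ or $M(h_2) = \infty$ then the right-hand side $M(h_1)^t M(h_2)^{1-t}$ is $+\infty$ and the inequality is trivial; if both are finite, Hölder applies directly to the a.e.\ positive integrands $\Phi(h_i, \cdot) \in L^1(\theta)$. Thus no additional integrability hypothesis is needed beyond what is already in the statement, and the argument is essentially a two-line consequence of Hölder once the pointwise log-convexity has been used.
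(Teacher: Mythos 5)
Your proof is correct and follows essentially the same route as the paper's: apply the pointwise log-convexity inequality $\Phi(th_1+(1-t)h_2,y)\le\Phi(h_1,y)^t\Phi(h_2,y)^{1-t}$, integrate, and conclude with Hölder's inequality for the conjugate exponents $1/t$ and $1/(1-t)$, with strictness carried through from the first (pointwise strict) step. Your additional remarks on measurability, the non-triviality of $\theta$, and the infinite-integral case are sensible bookkeeping that the paper leaves implicit.
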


\begin{proof}
    Let $y \in Y$.
    Since $\Phi(\cdot, y)$ is (strictly) log-convex, we have for all $t \in (0, 1)$ and all $h_1, h_2 \in E$ that 
    \begin{equation} \label{eq:log-convex1}
        \Phi(t h_1 + (1 - t) h_2, y)
        \overset{(<)}{\le} \Phi(h_1, y)^t \Phi(h_2, y)^{1 - t}.
    \end{equation}
    Hence Hölder's inequality with conjugate exponents $p = \frac{1}{t}$ and $q = \frac{1}{1 - t}$ implies
    \begin{align*}
        M(t h_1 + (1 - t) h_2)
        & = \int_{Y} \Phi(t h_1 + (1 - t) h_2, y) \diff{\theta}(y) \\
        & \overset{(<)}{\le} \int_{Y} \Phi(h_1, y)^t \Phi(h_2, y)^{1 - t} \diff{\theta}(y) \\
        & \le \left(\int_{Y} \Phi(h_1, y) \diff{\theta}(y)\right)^{t} \left(\int_{Y} \Phi(h_1, y) \diff{\theta}(y)\right)^{1 - t} \\
        & = M(h_1)^t M(h_2)^{1 - t}.
    \end{align*}
\end{proof}

\section{Convex conjugate style formulation of the Rényi divergence for measures on \texorpdfstring{$\R^d$}{Rd}}

For a Polish space $X$ let $\C_c(X) \subset \C_0(X) \subset \C_b(X) \subset \C(X)$, be the subsets of continuous functions that are compactly supported resp. vanishing at infinity resp. bounded.
If $X$ is compact, then $\C_c(X) = \C(X)$.

The next lemma states a variational formulation of the Rényi divergence 
for probability measures on the non-compact space $X = \R^d$, generalizing \Cref{theorem:RenyiPreconjugate}.
Notice that the variables of the variational formulation are now $\C_0(\R^d)$.

\begin{lemma}[Variational formulation of $\RR_{\alpha}$ on $\C_0(\R^d)$]
\label{lem:renyi_var_form_C0}
    Let $\mu, \nu \in \P(\R^d)$ and $\alpha \in (0,1)$.
    Then we have 
    \begin{equation*}
        \RR_\alpha(\mu \mid \nu) 
        = \sup_{\phi \in \C_0(\R^d)} \left\{
        \frac{\alpha}{\alpha - 1} \ln\left(\langle e^{(\alpha - 1)\phi}, \mu \rangle\right) 
        - \ln\left(\langle e^{\alpha\phi}, \nu \rangle\right)
        \right\}.
    \end{equation*}
\end{lemma}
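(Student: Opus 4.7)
The plan is to leverage the variational formula
$\RR_\alpha(\mu\mid\nu) = \sup_{g \in \Omega} J(g)$ from~\cite{BPDKB23},
valid for any $\Omega$ with $\C(\R^d) \subset \Omega \subset \F(\R^d)$, where
\begin{equation*}
    J(g) \coloneqq \tfrac{\alpha}{\alpha - 1}\ln\langle e^{(\alpha - 1)g}, \mu\rangle - \ln\langle e^{\alpha g}, \nu\rangle,
\end{equation*}
and to show that the supremum is unchanged when restricted to the strictly smaller class $\C_0(\R^d)$.
The upper bound $\sup_{\phi \in \C_0(\R^d)} J(\phi) \le \RR_\alpha(\mu\mid\nu)$
is immediate: take $\Omega = \F(\R^d)$ and use $\C_0(\R^d) \subset \F(\R^d)$.
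The work lies in the reverse inequality.

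For the reverse, I fix $\varepsilon > 0$ and use the cited formula with
$\Omega = \C(\R^d)$ to pick $g \in \C(\R^d)$ with
$J(g) > \RR_\alpha(\mu\mid\nu) - \varepsilon$. Since
$\RR_\alpha(\mu\mid\nu) \ge 0$, I may assume $J(g) > -\infty$, which forces
both $\langle e^{(\alpha - 1)g}, \mu\rangle$ and $\langle e^{\alpha g}, \nu\rangle$
to lie in $(0, \infty)$. I then choose cutoffs $\chi_R \in \C_c(\R^d)$ with
$0 \le \chi_R \le 1$ and $\chi_R \equiv 1$ on $\overline{B_R(0)}$,
so $\chi_R \to 1$ pointwise as $R \to \infty$. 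Setting
$\phi_R \coloneqq g\chi_R \in \C_c(\R^d) \subset \C_0(\R^d)$
gives a sequence converging pointwise to $g$. For any $s \in \R$ and any
$x \in \R^d$, since $sg(x)\chi_R(x)$ lies between $0$ and $sg(x)$ (case split
by the sign of $sg(x)$), one obtains the pointwise bound
\begin{equation*}
    e^{s\phi_R(x)} \le \max(1, e^{sg(x)}) \le 1 + e^{sg(x)}.
\end{equation*}
Specialising to $s = \alpha$ and $s = \alpha - 1$, the dominators
$1 + e^{sg}$ are integrable against $\nu$ and $\mu$ respectively by the
finiteness of $J(g)$. Dominated convergence yields
$\langle e^{\alpha\phi_R}, \nu\rangle \to \langle e^{\alpha g}, \nu\rangle$
and $\langle e^{(\alpha - 1)\phi_R}, \mu\rangle \to \langle e^{(\alpha - 1)g}, \mu\rangle$.
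Since these limits are strictly positive, continuity of $\ln$ on $(0, \infty)$
gives $J(\phi_R) \to J(g)$, so choosing $R$ large enough produces
$\phi_R \in \C_0(\R^d)$ with $J(\phi_R) > \RR_\alpha(\mu\mid\nu) - 2\varepsilon$.

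The main subtle step is extracting the integrable dominator
$1 + e^{sg}$ directly from the mere finiteness of $J(g)$; this is what allows
dominated convergence to be applied without imposing any a priori boundedness,
compact support, or moment condition on the near-maximiser $g$. With that
observation in hand the remainder of the argument is routine, and tightness
of $\mu$ and $\nu$ is used only implicitly through the pointwise convergence
$\chi_R \to 1$ on all of $\R^d$.
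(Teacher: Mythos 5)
Your proof is correct, and its skeleton matches the paper's: the easy inequality follows from the inclusion of function classes in the cited variational formula, and the hard inequality is obtained by multiplying a near-optimal continuous potential with compactly supported cutoffs and passing to the limit via dominated convergence and continuity of $\ln$ on $(0,\infty)$. The one genuine difference is where you start the approximation. The paper first reduces to a near-maximiser $\phi \in \C_b(\R^d)$, so that the truncations $\phi_k$ are uniformly bounded by $\gamma = \sup_X|\phi|$ and the dominating function is simply the constant $e^{\gamma}$; this, however, tacitly uses that the supremum over $\C_b(\R^d)$ already equals $\RR_\alpha(\mu\mid\nu)$, which is not literally covered by the cited formula requiring $\C(X)\subset\Omega$. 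You instead take the near-maximiser $g$ in all of $\C(\R^d)$ (where the cited formula does apply) and extract the integrable dominator $1+e^{sg}$ from the finiteness of $J(g)$ — noting correctly that $J(g)>-\infty$ forces both $\langle e^{(\alpha-1)g},\mu\rangle$ and $\langle e^{\alpha g},\nu\rangle$ to be finite because $\tfrac{\alpha}{\alpha-1}<0$. This buys you an argument that needs no a priori boundedness of the near-maximiser and sidesteps the paper's unjustified intermediate step. Two cosmetic points: the statement "pick $g$ with $J(g)>\RR_\alpha(\mu\mid\nu)-\varepsilon$" should be replaced by "pick $g$ with $J(g)>M$ for arbitrary $M$" when $\RR_\alpha(\mu\mid\nu)=+\infty$ (e.g. when $\mu\perp\nu$), and strict positivity of the two integrals is automatic since the integrands are strictly positive and $\mu,\nu$ are probability measures; both are routine and do not affect correctness.
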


\begin{proof}
    We show both inequalities similar to~\cite[Lem.~6]{NSSR24}:
    \begin{enumerate}
        \item 
        Since $\C_0(\R^d) \subset \C_b(\R^d)$
        and $\Lip_b(\R^d) \subset \C_b(\R^d)$ consists of measurable functions, 
        we have with \eqref{eq:renyi_var_form}
        \begin{equation*}
            \RR_\alpha(\mu \mid \nu)
           \ge \sup_{\phi \in \C_0(\R^d)} \left\{
            \frac{\alpha}{\alpha - 1} \ln\left(\langle e^{(\alpha - 1)\phi}, \mu \rangle\right) 
            - \ln\left(\langle e^{\alpha\phi}, \nu \rangle\right)
            \right\}.    
        \end{equation*}
        
        \item 
        Let $\phi \in \C_b(\R^d)$.
        We find a family of $\C_0(\R^d)$-functions $(\phi_k)_{k \in \N}$
        such that $\lim_{k \to \infty} \phi_k(x) = \phi(x)$ for all $x \in \R^d$.
        Furthermore, we have the following inequality
        \begin{equation*}
            \sup_{X} \phi\ge \max_{X} \phi_k\ge \inf_{X} \phi_k\ge \min(0,\inf_{X} \phi),
        \end{equation*}
        which implies that 
        \begin{align*}
        |\phi_k| = 
            \begin{cases}
                \phi_k(x) \\
                -\phi_k(x)
            \end{cases}
        & \le 
            \begin{cases}
                \max_{X}\phi_k(x) \\
                -\min_{X}\phi_k(x)
            \end{cases} \\
        & \le 
            \begin{cases}
                \sup_{X}\phi(x) \\
                -\min(0,\inf_{X}\phi(x)) \\
            \end{cases} \\
        & = \begin{cases}
                \sup_{X}\phi(x) & \text{if } \phi_k(x)\ge 0,\\
                \max(0, \sup_{X} -\phi(x)) & \text{else}, 
            \end{cases}
        \le \sup_{X} |\phi| \eqqcolon \gamma
        \end{align*}
        for any $k \in \N$.
        We have 
        \begin{enumerate}
            \item 
            $|e^{\alpha \phi_k}| \le e^{\alpha |\phi_k|} \le e^{\alpha \gamma} \le e^{\gamma}$ and 
            \item 
            $|e^{(\alpha - 1) \phi_k}| \le e^{(1 - \alpha) |\phi_k|} \le e^{(1 - \alpha) \gamma} \le e^{\gamma}$ for all $\alpha \in (0,1)$ and $k \in \N$,
        \end{enumerate}
        where $e^\gamma$ is in $L^1(\mu)$, respectively $L^1(\nu)$.
        Hence, the dominated convergence theorem 
        with the continuity of $\ln \colon (0, \infty) \to \R$ 
        states for any $\phi \in C_b(\R^d)$ that 
        \begin{align*}
            & \frac{\alpha}{\alpha - 1} \ln\left(\langle e^{(\alpha - 1)\phi}, \mu \rangle\right) 
            - \ln\left(\langle e^{\alpha\phi}, \nu \rangle \right) \\
            & \quad = \lim_{k \to \infty} 
            \frac{\alpha}{\alpha - 1} \ln\left(\langle e^{(\alpha - 1)\phi_k}, \mu \rangle\right)
             - \ln\left(\langle e^{\alpha\phi_k}, \nu \rangle\right) \\
            & \quad \le \sup_{\psi \in \C_0(\R^d)}
            \frac{\alpha}{\alpha - 1} \ln\left(\langle e^{(\alpha - 1)\psi}, \mu \rangle\right) 
            - \ln\left(\langle e^{\alpha\psi}, \nu \rangle\right).
        \end{align*}
        Taking the supremum over all $\phi \in \C_b(\R^d)$ on the left hand side 
        yields the assertion.
    \end{enumerate}
\end{proof}

\begin{theorem}[Preconjugate of the Rényi divergence] \label{theorem:ConvConjRenyiDiv}
    The convex preconjugate with respect to the first component of the Rényi divergence
    of order $\alpha \in (0,1)$,
    where $\theta \in \P(\R^d)$, is
    \begin{equation*}
        \big[\RR_{\alpha}(\;\cdot \mid \theta)\big]_*(g)
        = \begin{cases}
            \ln\left( \int_{\R^d} \big(-g(x)\big)^{\frac{\alpha}{\alpha - 1}} \diff{\theta}(x) \right) + C_{\alpha}, & \text{if } g \in \C_0(\R^d), g < 0, \\
            +\infty & \, \text{else.}
            \end{cases}
    \end{equation*}
    where $C_\alpha \coloneqq -\frac{\alpha}{1 - \alpha}\left(1 + \ln\left(\frac{1 - \alpha}{\alpha}\right)\right)$.
\end{theorem}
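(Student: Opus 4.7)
My plan is to follow the template of the proof of Theorem~\ref{theorem:RenyiPreconjugate} (the compact case), using Lemma~\ref{lem:renyi_var_form_C0} as the non-compact replacement for the initial variational formula \eqref{eq:renyi_var_form}. I would first rewrite
\[
\RR_\alpha(\mu\mid\theta)=\sup_{\phi\in C_0(\R^d)}\Bigl\{\tfrac{\alpha}{\alpha-1}\ln\langle e^{(\alpha-1)\phi},\mu\rangle-\ln\langle e^{\alpha\phi},\theta\rangle\Bigr\},
\]
and apply the identity $\ln c=\inf_{z\in\R}(z-1+ce^{-z})$ from \eqref{eq:lnInfProblem} to the first logarithm. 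Since $\alpha/(\alpha-1)<0$, the inner infimum becomes a supremum, yielding a joint supremum over $\phi$ and $z$. The substitution $h\coloneqq-\frac{\alpha}{1-\alpha}e^{-z}e^{(\alpha-1)\phi}$ then, after the very same collecting of $z$-terms as in the proof of Theorem~\ref{theorem:RenyiPreconjugate}, leads to
\[
\RR_\alpha(\mu\mid\theta)=\sup_{h}\bigl\{\langle h,\mu\rangle-\ln\langle(-h)^{\frac{\alpha}{\alpha-1}},\theta\rangle\bigr\}-C_\alpha,
\]
which is precisely the conjugate duality expressing $[\RR_\alpha(\cdot\mid\theta)]_*(g)=\ln\langle(-g)^{\alpha/(\alpha-1)},\theta\rangle+C_\alpha$ on $\{g\in C_0(\R^d):g<0\}$.

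For the remaining case, when $g\in C_0(\R^d)$ fails $g<0$ somewhere on $\mathrm{supp}(\theta)$ (or $g\notin C_0(\R^d)$), I would verify that the preconjugate equals $+\infty$. The idea is to pick a point $x_0\in\mathrm{supp}(\theta)$ with $g(x_0)\ge 0$ and localize a family $(\mu_n)_{n\in\N}\subset\P(\R^d)$ around $x_0$ via $\theta$-restrictions to shrinking neighborhoods of $x_0$; by absolute continuity, $\RR_\alpha(\mu_n\mid\theta)$ remains controlled while $\langle g,\mu_n\rangle$ stays nonnegative (plus an $o(1)$ error), so a rescaling of the localization radius makes the Legendre pairing diverge to $+\infty$. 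A similar argument handles the case where $g$ merely lies in, say, $C_b(\R^d)\setminus C_0(\R^d)$: the integrand $(-g)^{\alpha/(\alpha-1)}$ does not decay and the optimal $\mu$ candidate escapes.

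The main obstacle is that the reparametrization $\phi\mapsto h=-\frac{\alpha}{1-\alpha}e^{-z}e^{(\alpha-1)\phi}$ does \emph{not} preserve $C_0(\R^d)$: for $\phi\in C_0(\R^d)$ one has $h\to -\frac{\alpha}{1-\alpha}e^{-z}\ne 0$ at infinity, so the image is contained in $C_b(\R^d)$ rather than $C_0(\R^d)$, whereas we want to identify the preconjugate on $C_0(\R^d)$. To resolve this I would first upgrade Lemma~\ref{lem:renyi_var_form_C0} from $\phi\in C_0(\R^d)$ to $\phi\in C_b(\R^d)$ (a routine density/truncation argument essentially already contained in the proof of Lemma~\ref{lem:renyi_var_form_C0}), perform the algebraic manipulation at the level of $C_b(\R^d)$, and then use that the resulting supremum over $\{h\in C_b(\R^d):h<0\}$ can be approximated from below by $\{h\in C_0(\R^d):h<0\}$: a sequence $h_n=h\cdot\chi_n$ with a smooth cutoff $\chi_n\nearrow 1$ lies in $C_0(\R^d)$, stays strictly negative on a prescribed compact set, and by dominated convergence realizes the same objective in the limit. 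Making this density step rigorous, while simultaneously controlling the integrability of $(-h)^{\alpha/(\alpha-1)}$ under truncation, will be the most delicate part of the argument.
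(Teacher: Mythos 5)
Your overall strategy coincides with the paper's: redo the compact-case algebra of \Cref{theorem:RenyiPreconjugate} at the level of $\C_b(\R^d)$ and then argue that the supremum over $\{h \in \C_b(\R^d) : h < 0\}$ is already attained (in the limit) over $\{h \in \C_0(\R^d) : h < 0\}$. You also correctly isolate the key obstacle, namely that the substitution $h = -\tfrac{\alpha}{1-\alpha}e^{-z}e^{(\alpha-1)\phi}$ maps $\C_0$ into $\C_b$ but not back into $\C_0$.

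However, the approximation step you propose to close this gap does not work. Since $\tfrac{\alpha}{\alpha-1} < 0$ for $\alpha \in (0,1)$, the integrand $(-h)^{\frac{\alpha}{\alpha-1}} = (-h)^{-\frac{\alpha}{1-\alpha}}$ blows up wherever $h$ approaches $0$. Your cutoff $h_n = h \cdot \chi_n$ with $\chi_n \nearrow 1$ vanishes identically outside the support of $\chi_n$ (or, if $\chi_n > 0$ everywhere, still forces $h_n \to 0$ at infinity at the rate of $\chi_n$), so for any $\theta$ that is not compactly supported one gets $\langle (-h_n)^{\frac{\alpha}{\alpha-1}}, \theta \rangle = +\infty$ and hence the objective value $-\infty$ at \emph{every} $h_n$; dominated convergence cannot recover the supremum from a sequence of values all equal to $-\infty$. (Monotone convergence fails for the same reason: the sequence $((-h)\chi_n)^{\frac{\alpha}{\alpha-1}}$ decreases, so one would need the \emph{first} term to be integrable, which is exactly what fails.) The paper's proof avoids this by keeping the approximant strictly negative everywhere: it sets $\tilde g = g^*$ on a large ball $B_R(0)$ and lets $\tilde g$ decay to zero \emph{slowly} outside, at a rate $\tilde\eps(\|x\|)$ calibrated against the tail decay $\theta(\R^d \setminus B_R(0)) \in \mathcal O(1/R)$, so that $|\tilde g|^{\frac{\alpha}{\alpha-1}}$ grows slower than the tails of $\theta$ (and $\mu$) vanish and the integral stays finite. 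To repair your argument you would have to replace the multiplicative cutoff by such a tail-calibrated, everywhere strictly negative extension; as written, the density step is not merely "delicate" but fails outright.
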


\begin{proof}
    The first part of the proof follows the idea from~\cite[Thm.~2.2]{BPDKB23}
    and gives the proof for the supremum over bounded continuous functions.
    The second part shows that the supremum over the subspace of continuous vanishing functions
    is sufficient.
    
    \begin{enumerate}
        \item 
        Proceed exactly as in \Cref{theorem:RenyiPreconjugate}, replacing $X$ by $\R^d$ and $\C(X)$ by $\C_b(\R^d)$.

        \item 
        Notably, the supremum over $\C_0(\R^d) \subset \C_b(\R^d)$ is smaller, in general.
        We show an approximation possibility via a construction 
        in $\C_0(\R^d)$.
    
        Let $g^* \in \C_b(\R^d)$ and $g^* < 0$ the function realizing the Rényi divergence,
        i.e. the variational formulation.
        The main idea is to construct $\tilde g \in \C_0(\R^d)$ also realizing the Rényi divergence.
        Since $\tilde g \in \C_0(\R^d)$ is now necessary we have to handle the infinite distance points.
        Let $B_R(0) \coloneqq \{x \in \R^d : \|x\|_2 < \infty\} \subset \R^d$ 
        the $d$-dimensional ball with radius $R>0$ around zero. 
        Since $\mu, \theta \in \P(\R^d)$, we have
        \begin{equation*}
            \mu(\R^d \setminus B_R(0)) \to 0
            \quad \text{for} \quad R \to \infty
        \end{equation*}
        especially we can realize the convergence by a function $\eps(R) \in \mathcal O(1/R)$,
        i.e. $\mu(\R^d \setminus B_R(0)) \le \eps(R)$.
        Without loss of generality the same bound holds at least for $\theta$.
    
        Now, since we still take the supremum over $\C_0(\R^d) \ni \tilde g$
        with 
        \begin{equation*}
            \tilde g |_{B_R(0)} = g^*|_{B_R(0)} \quad \text{for any} \quad R > 0,
        \end{equation*}
        we find 
        \begin{equation*}
            \tilde g(x)|_{\R^d\setminus B_R(0)} \in \tilde \eps(\|x\|),
        \end{equation*}
        $\tilde \eps$ monotone decreasing
        where $\tilde \eps(\|x\|) \le \tilde \eps(R) \in \mathcal O(1/\sqrt{R})$.
        Hence, we found a negative vanishing function, 
        approximating the bounded function---realizing the variational formulation---in the sense, 
        that 
        \begin{enumerate}
            \item 
            $\tilde g$ is exactly $g^*$ whenever the mass of $\mu$ and $\theta$ is not vanishing,
            \item 
            otherwise, $\tilde g$ vanishes slower,
            i.e. $|\tilde g|^{\frac{\alpha}{\alpha - 1}}$ increases slower,
            than $\mu$ and at least $\theta$ vanish
            - the crucial second part in the variational formulation.
        \end{enumerate}
    \end{enumerate}
    Since $\ln\left(\int_{X} |\cdot|^\frac{\alpha}{\alpha - 1} \diff{\theta}\right)$ 
    is proper, lower semicontinuous and convex in $\C_0(\R^d)$, 
    we can directly read off a preconjugate of the Rényi divergence with respect to its first component from \eqref{eq:ConvConjFinale}.
\end{proof}

\section{Subgradient descent on the dual problem}
In this subsection we show how a subgradient method can be used to solve the dual problem in the discrete setting, which reads
\begin{equation} \label{eq:dual_discrete}
    \max_{\substack{\Vq \in \R^{2d} \\ L^*\Vq < \MM}} 
    \left\langle \Vq, \begin{bmatrix} \Vr \\ \Vc \end{bmatrix}\right\rangle 
    - \eps 
    \ln\left(\langle (\MM - L^*\Vq)^\frac{\alpha}{\alpha - 1}, \Vr\Vc^{\tT}\rangle_{\tF}\right)
    + C_{\alpha, \eps},
\end{equation}
where
\begin{equation*}
    L \colon \R^{d\times d} \to \R^{2d}, \qquad
    \MP \mapsto \begin{bmatrix}
        \MP\1 \\
        \MP^{\tT}\1
    \end{bmatrix}
\end{equation*}
is the discrete counterpart of $A^*$, where $A$ is defined in \eqref{eq:A}.
We denote the objective function in \eqref{eq:dual_discrete} by $\Phi \colon \{ \Vq \in \R^{2 d}: L^* \Vq < \MM \} \to \R$.

We can use the subgradient method~\cite{S12} with step sizes $(a^{(k)})_{k \in \N}$ and
with some special constraint for the initial value 
$\Vq^{(0)} \in \R^{2d}$
and the subgradient - say $\Vg^{(k)}$ at $\Vq^{(k)}$ -
to guarantee that 
\begin{itemize}
    \item
    in each iteration $k$, the variable $\Vq^{(k+1)}$ is feasible, i.e.
    $L^*\Vq^{(k+1)}
    = L^*\left[\Vq^{(k)} + a^{(k)}\Vg^{(k)}\right] < \MM$,
    which additionally guarantees the differentiability 
    of the objective function in \eqref{eq:dualdualSinkhornProblem} 
    and 
    
    \item
    the functional value decreases in each iteration 
    step $k$.
\end{itemize}

\begin{remark}[Step size choice]
    Note that the domain of our dual formulation is open,
    hence projected (sub)gradient ascent can not be applied directly.
    We employ an Armijo line search type heuristic by scaling a fixed initial step size 
    until the domain constraint is fulfilled.
    However, since the problem is strictly concave,
    this choice of step size should solve the problem.
    In the numerical experiments 
    the proposed method shows great potential.
    However, it is much more slower 
    in terms of convergence speed than the proposed primal method.
\end{remark}

Notice that \eqref{eq:primal-dual_discrete} already guarantees that the entries of the transport plan are positive whenever $r_i c_j > 0$.
By \eqref{eq:primal-dual_discrete}, the entries of the OT can be computed 
by defining $\tilde{p}_{i,j} = (m_{i, j} - q_{i} - q_{d + j})^{\frac{1}{\alpha - 1}}r_ic_j$ 
and then normalizing the resulting matrix to have a total entry sum of one.
We summarize this in \Cref{algo:two}.

\begin{algorithm}[H]
    \caption{Subgradient ascent method to solve problem \eqref{eq:dualdualSinkhornProblem}---D-$\RR_\alpha$ROT.} 
    \label{algo:two}
    \SetAlgoLined
    \KwData{marginals $\Vr, \Vc \in \Sigma_N$, distance matrix $\MM$, regularization parameter $\eps > 0$,
    and order $\alpha \in (0,1)$.}
    \KwResult{Dual solution $\Vq$ to \eqref{eq:dual_discrete}.}
    $\Vq^{(0)}\gets -\1_{2d}$
    \Comment{Ensures $L^*\Vq^{(0)} < \MM$.} \\
    \For{$k = 1,2,3,\ldots$}{
        $\MY^{(k)}\gets L^*\Vq^{(k)} - \MM$ \\
        $\Vg^{(k)}\gets [\Vr, \Vc]^{\tT} + \partial \RR_\alpha(\;\cdot\; \mid \Vr\Vc^{\tT})^*[\MY^{(k)}]$ \\
        $a^{(k)}\gets 10, t\gets 0.5$\\ 
        \While{$\max(L^*(\Vq^{(k)} + a^{(k)}\Vg^{(k)}) - \MM)\ge 0$ 
        or 
        $\Phi(\Vq^{(k)} + a^{(k)}\Vg^{(k)}) < \Phi(\Vq^{(k)})$}{
            $a^{(k)}\gets a^{(k)}\cdot t$
            \Comment{Ensures feasibility of iterates
            and increase of functional value.}
        }
        $\Vq^{(k+1)}\gets \Vq^{(k)} + a^{(k)}\Vg^{(k)}$
    }
\end{algorithm}

\end{appendices}

\bibliography{Bibliography}

\end{document}